\newcommand{\SBim}{\operatorname{ASBim}}
\newcommand{\HC}{\operatorname{HC}}
\newcommand{\Hilt}{\operatorname{HC-tilt}}
\newcommand{\h}{\mathfrak{h}}
\newcommand{\U}{\mathcal{U}}
\newcommand{\F}{\mathbb{F}}
\newcommand{\Hom}{\operatorname{Hom}}
\newcommand{\St}{\mathsf{St}}
\newcommand{\Coh}{\operatorname{Coh}}
\newcommand{\Dcal}{\mathcal{D}}
\newcommand{\Tcal}{\mathcal{T}}
\newcommand{\Ext}{\operatorname{Ext}}
\newcommand{\End}{\operatorname{End}}
\newcommand{\B}{\mathcal{B}}
\newcommand{\Ecal}{\mathcal{E}}
\newcommand{\Spec}{\operatorname{Spec}}
\newcommand{\g}{\mathfrak{g}}
\newcommand{\Acal}{\mathcal{A}}
\newcommand{\Ocal}{\mathcal{O}}
\newcommand{\Z}{\mathbb{Z}}
\newcommand{\Nilp}{\mathcal{N}}
\newcommand{\Dual}{\mathbb{D}}
\newcommand{\Ocat}{\mathsf{O}}
\newcommand{\Perv}{\operatorname{Perv}}
\newcommand{\Asf}{\mathsf{A}}
\newcommand{\pr}{\operatorname{pr}}
\newcommand{\Ring}{\mathsf{R}}
\newcommand{\Str}{\mathcal{O}}
\newcommand{\Br}{\mathsf{Br}}
\newtheorem{Thm}{Theorem}[section]
\newtheorem{Prop}[Thm]{Proposition}
\newtheorem{Cor}[Thm]{Corollary}
\newtheorem{Lem}[Thm]{Lemma}
\theoremstyle{definition}
\newtheorem{Ex}[Thm]{Example}
\newtheorem{defi}[Thm]{Definition}
\newtheorem{Rem}[Thm]{Remark}
\numberwithin{equation}{section}
\title{On modular Soergel bimodules, Harish-Chandra bimodules, and category O}
\author{Ivan Losev}
\address{Department
of Mathematics, Yale University, New Haven CT USA}
\email{ivan.loseu@gmail.com}
\begin{document}
\begin{abstract}
In this paper we continue the study of the category of modular Harish-Chandra bimodules initiated by
Bezrukavnikov and Riche and also study the modular version of the BGG category $\mathcal{O}$.
We prove a version of the Bezrukavnikov-Mirkovic-Rumynin localization theorem for the Harish-Chandra bimodules and for the category $\mathcal{O}$. We also relate the category of Harish-Chandra bimodules
to the affine Hecke category building on the prior work of Bezrukavnikov and Riche.
\end{abstract}
\maketitle
\tableofcontents
\section{Introduction}\label{S_intro}
Let $\F$ be an algebraically closed field of characteristic $p$.
Let $G$ be a simple algebraic group over $\F$.
 Throughout the paper  we assume that $p$ is bigger than $h$, the Coxeter number
of $G$.

 Let $W$ denote the Weyl group of $G$
and let $T$ be a maximal torus. We write $\g$ for the Lie algebra of $G$
and $\h$ for the Lie algebra of $T$. We write $\Lambda_r$ for the root lattice
and $\Lambda$ for the character lattice of $T$ so that $\Lambda_r\subset \Lambda$.
Consider the affine Weyl group $W^a:=W\ltimes \Lambda_r$ as well
as the extended affine group $W^{ea}:=W\ltimes \Lambda$. Recall that
$W^{ea}=(\Lambda/\Lambda_r)\ltimes W^a$, where $\Lambda/\Lambda_r$ is the subgroup of all
elements of length $0$. Below we will always view $\Lambda/\Lambda_r$ as a subgroup of
$W^{ea}$ in this way. For $\lambda\in \Lambda$ we write $t_\lambda$ for the corresponding element of
$W^{ea}$ (or for its natural lift to the corresponding braid group $\operatorname{Br}^{ea}$).

The goal of this paper is to relate several categories (triangulated, additive and abelian)
that are associated to the Lie algebra $\g$. Indecomposable (in the additive setting)
and simple (in the abelian setting) objects in these categories are labelled by the
elements of $W^{ea}$. The categories of interest are the affine Hecke category,
the category of Harish-Chandra bimodules and the (classical modular) category $\Ocat$.

\subsection{Affine Hecke category}\label{SS_Hecke_cat}
We start by discussing the affine Hecke category for $W^{ea}$.
The group $W^{ea}$ acts on $\h^*$
via its projection to $W$. To this realization one can assign the category
of (diagrammatic) Soergel bimodules  in the sense of Elias and Williamson,
\cite{EW}. Note that this paper deals with the case of a Coxeter system, the extension
that we need to deal with $W^{ea}$ can be found in \cite[Section 3]{Elias}.
The resulting category is a Karoubian monoidal category.

There is another, ``algebraic'' realization of the affine Hecke category due to
Abe, \cite{Abe} (for Coxeter groups, a modification for $W^{ea}$ can be found in
\cite[Section 2.2]{BR}). Abe's construction gives
a version of the classical construction of Soergel, see, e.g., \cite{Soergel}. Unlike Soergel's original construction, Abe's works well in positive characteristic.
We will use the version of \cite{BR}. Denote Abe's category for $W^{ea}$
by $\SBim$ (``A'' for Abe, ``S'' for Soergel and ``Bim'' for bimodules).
The Hom spaces in $\SBim$ are graded $\F[\h^*]$-bimodules supported on
the graph of the $W$-action on $\h^*$ and are finitely generated free left $\F[\h^*]$-modules and
also finitely generated free right $\F[\h^*]$-modules. It therefore makes sense to consider the completed
version $\SBim^\wedge$ of $\SBim$. In this category we have the same objects.
Let $\Ring$ be the completion of $\F[\h^*]$ at $0$.
For two objects
$B,B'\in \SBim^\wedge$ we set
$$\Hom_{\SBim^\wedge}(B,B')=\Hom_{\SBim}(B,B')\otimes_{\F[\h^*]}\Ring.$$
Note that the right hand side coincides with the completion on the left as well.
So $\SBim^\wedge$ is still a  monoidal category. One can show that there is a natural bijection between
the indecomposable objects (up to grading shift) in $\SBim$ and the indecomposable objects in
$\SBim^\wedge$. Both are labelled by the elements of $W^{ea}$. The category $\SBim^\wedge$
is generated by the Bott-Samelson objects $B^{AS}_s$, where $s$ runs over the set of simple
affine reflections, and the standard objects $\Delta^{AS}_x$ for $x\in \Lambda/\Lambda_r$.
These objects will be recalled in Section \ref{SS_BR_AS}.

This is an additive version of the affine Hecke category. One also could (and should) consider
the triangulated version, $K^b(\SBim^\wedge)$. Moreover, there are abelian versions. For example,
there is a highest weight category $\Ocat_\Ring$ (to be referred to as a ``Soergel-type''
category $\Ocat$) that has essentially appeared in \cite[Section 6]{EL}. The category
$\SBim^\wedge$ is identified with the category of tilting objects in $\Ocat_\Ring$.

One can expect two kinds of geometric realizations of $\SBim$. There is a constructible realization,
see \cite[Theorems 1.3,1.5]{BR2}.

\subsection{Harish-Chandra bimodules}
In this paper we will care
about a coherent realization of $\SBim^\wedge$. As suggested in \cite{BR}, to get this realization one uses a modular version of the category of Harish-Chandra bimodules (a classical object of study in the ``usual'', i.e., characteristic $0$, Lie representation theory).

Let $\U:=U(\g)$ be the universal enveloping algebra of $\g$.
Consider the completed version $\U^{\wedge_0}$ of the universal enveloping
algebra as in \cite[Section 3.5]{BR} at the zero Harish-Chandra (shortly, HC) character. It makes sense to speak about HC bimodules
for $\U^{\wedge_0}$,  see \cite[Section 3.5]{BR}. These are finitely generated
$G$-equivariant $\U^{\wedge_0}$-modules, where the resulting left action of $\U$
factors through $\U^{\wedge_0}$. Denote this category by $\HC^G(\U^{\wedge_0})$.
Inside we consider the category of ``HC-tilting'' objects, the direct
$G$-equivariant right $\U^{\wedge_0}$-module summands
in objects of the form $T\otimes_{\F}\U^{\wedge_0}$, where $T$ is a tilting $G$-module.
Denote the full subcategory of HC-tilting objects by $\Hilt^G(\U^{\wedge_0})$. Note that the category
$\HC^G(\U^{\wedge_0})$ is monoidal and $\Hilt^G(\U^{\wedge_0})$ is a monoidal Karoubian additive subcategory. Let us give examples of objects in $\Hilt^G(\U^{\wedge_0})$ considered in
\cite{BR}. There are reflection bimodules, $B^{HC}_s$,
labelled by the simple affine reflections $s$ (tensoring with such a bimodule gives a
classical reflection functor, hence the name) and standard bimodules $\Delta^{HC}_x,x\in \Lambda/\Lambda_r$ (tensoring with them gives a translation equivalence). We will elaborate on them in Section \ref{SS_BR_HC}.

The main result of \cite{BR}, mainly \cite[Theorem 6.3]{BR}, can be stated as follows
(we will elaborate why in Section \ref{SS_BR_HC}, see Proposition \ref{Prop:BR}).

\begin{Thm}\label{Thm:BR}
There is a full embedding $\SBim^\wedge\hookrightarrow \HC^G(\U^{\wedge_0})$ of monoidal
categories. This embedding sends $B^{AS}_s$ to $B^{HC}_s$ for all simple affine
reflections $s$ and $\Delta^{AS}_{x}$ to $\Delta^{HC}_x$ for all $x\in \Lambda/\Lambda_r$.
\end{Thm}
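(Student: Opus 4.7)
The plan is to deduce the statement as a reformulation of \cite[Theorem 6.3]{BR}, which already constructs a monoidal functor from (a variant of) the affine Hecke category into $\HC^G(\U^{\wedge_0})$. Three checks are then required: first, identify the source category used in \cite{BR} with $\SBim^\wedge$ as introduced above; second, verify that the two distinguished families of generators match on the nose; third, promote the resulting functor to a full embedding.

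For the first step, I would observe that the source category of \cite[Section 2.2]{BR} is Abe's construction applied to the natural realisation of $W^{ea}$ on $\h^*$, with morphism spaces realised as $\Ring$-bimodules supported on the graph of the $W$-action, finitely generated free on each side. This is precisely the recipe that defines $\SBim^\wedge$ once the completion at $0 \in \h^*$ is taken. Both incarnations produce a Karoubian monoidal category with indecomposable objects labelled by $W^{ea}$, and the Hom formula is the same, so the identification is essentially tautological once completion conventions are matched.

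For the second step, I would unwind the construction of the generators on both sides. The Bott--Samelson object $B^{AS}_s$ is built, in Abe's picture, from the standard generator associated to the simple affine reflection $s$, and $\Delta^{AS}_x$ for $x \in \Lambda/\Lambda_r$ is the length-zero standard object associated to $x$. On the HC side, $B^{HC}_s$ comes from translation through the $s$-wall and $\Delta^{HC}_x$ from the action of $\Lambda/\Lambda_r$ on blocks, as recalled in \cite{BR}. Computing the induced $\Ring$-bimodule structure on $B^{HC}_s$ and $\Delta^{HC}_x$ directly from those definitions should reproduce the Soergel formula, so the functor of \cite{BR} sends the named generators to the named targets.

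The hardest step is the third one: fullness is the principal content of \cite[Theorem 6.3]{BR}, and is obtained by comparing standard filtrations on both sides. HC-tilting bimodules are shown there to admit filtrations by standard HC bimodules $\Delta^{HC}_x$, with multiplicities controlled by the affine Hecke algebra, matching the multiplicities in the analogous filtrations on $\SBim^\wedge$. The main obstacle in writing this out explicitly would be to extract from \cite{BR} the compatibility of these filtrations with the images of the Bott--Samelson generators, in order to conclude fullness on the monoidal subcategory they generate; once that is in place, Karoubi completion yields the full embedding onto $\Hilt^G(\U^{\wedge_0})$, and faithfulness follows from a rank count in the Grothendieck groups, both of which categorify the affine Hecke algebra of $W^{ea}$.
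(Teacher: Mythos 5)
Your steps (1) and (2) are broadly in line with what the paper does, although step (1) is not quite tautological: one has to check that completing at $0$ commutes with passing to the relevant Hom spaces, which the paper does in Lemma \ref{Lem:SBim_completed} (and again for the geometric category in Lemma \ref{Lem:full_embedding_geometric}) by exhibiting the morphism spaces as solution sets of finite systems of linear equations over the base ring. That is routine but it is a genuine argument, not a matter of conventions.

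The real problem is step (3). Matching standard filtrations and their multiplicities, together with a rank count in Grothendieck groups, cannot establish full faithfulness: multiplicities and $K_0$-classes only see objects, while fullness and faithfulness are statements about surjectivity and injectivity on Hom spaces. Even granting that the relevant Hom spaces on both sides are free $\Ring$-modules of the same rank, an injective map between free modules of equal rank over a local ring need not be surjective, so no amount of numerology closes the gap. The mechanism actually used (both in \cite[Theorem 6.3]{BR} and in Proposition \ref{Prop:BR} of this paper) is different: one embeds \emph{both} $\SBim^\wedge$ and $\HC^G_{diag}(\U^{\wedge_0})$ fully faithfully into a common third category $\operatorname{Rep}(\mathfrak{I}^\wedge)$ of equivariant sheaves for a group scheme built from the universal centralizer, via restriction to the regular locus of $\g^{*(1)}$ and the Kostant section, using the splitting bundle $\Ecal^{diag}$ for the Azumaya algebra there (whose identification with the splitting bundle of \cite{BR} is the content of Lemma \ref{Lem:splitting_coincidence}). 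Once both categories sit as full subcategories of the same ambient category, it suffices to check that the generators $B_s^{AS},\Delta_x^{AS}$ and $B_s^{HC},\Delta_x^{HC}$ have the same images there; fullness and faithfulness are then inherited, with no filtration or $K_0$ argument needed. Your proposal is missing this common ambient category, which is the key idea of the proof.
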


One can easily see that the image of this embedding lies in $\Hilt^G(\U^{\wedge_0})$.
One of the goals of this paper is to prove the following stronger version of this theorem.

\begin{Thm}\label{Thm:additive}
The full embedding $\SBim^\wedge\hookrightarrow \Hilt^G(\U^{\wedge_0})$ from Theorem \ref{Thm:BR}
is a category equivalence.
\end{Thm}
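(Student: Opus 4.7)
Since Theorem \ref{Thm:BR} already provides a full embedding, the content of Theorem \ref{Thm:additive} is essential surjectivity. Write $\Ecal$ for the essential image of $\SBim^\wedge$ in $\Hilt^G(\U^{\wedge_0})$. Because $\SBim^\wedge$ is Karoubian and the embedding is full, $\Ecal$ is closed under direct summands. Moreover, $\SBim^\wedge$ is Krull--Schmidt with indecomposables $\{B_w^{AS}\}_{w \in W^{ea}}$ having local endomorphism rings; by fullness the images $B_w^{HC} \in \Hilt^G(\U^{\wedge_0})$ remain pairwise non-isomorphic and indecomposable. Thus $\Ecal$ is the additive Karoubian closure of $\{B_w^{HC} : w \in W^{ea}\}$, and the problem reduces to proving that every HC-tilting object lies in this closure.

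By Krull--Schmidt and the definition of $\Hilt^G(\U^{\wedge_0})$ as summands of $T \otimes_\F \U^{\wedge_0}$ for tilting $G$-modules $T$, it is enough to handle $T = T(\lambda)$ for $\lambda \in \Lambda^+$. Since $\Ecal$ is monoidal and contains each $B_s^{HC}$ together with the invertible $\Delta_x^{HC}$ for $x \in \Lambda/\Lambda_r$, it automatically contains all affine Bott--Samelson HC-bimodules of the form
\[
\Delta^{HC}_{x_0} \otimes B^{HC}_{s_1} \otimes \Delta^{HC}_{x_1} \otimes \cdots \otimes B^{HC}_{s_n} \otimes \Delta^{HC}_{x_n}
\]
and all their direct summands. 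Essential surjectivity thus reduces to the following \emph{key claim}: each $T(\lambda) \otimes_\F \U^{\wedge_0}$ is a direct summand of some affine Bott--Samelson HC-bimodule as above.

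I would prove the key claim by induction on the position of $\lambda$ in the alcove order on $\Lambda^+$. The base case $\lambda = 0$ is immediate: $T(0) \otimes \U^{\wedge_0} = \U^{\wedge_0}$, which is the monoidal unit of $\Hilt^G(\U^{\wedge_0})$, hence the image of the monoidal unit of $\SBim^\wedge$. The inductive step reinterprets the standard construction of modular indecomposable tilting $G$-modules via successive wall-crossings (Andersen's alcove combinatorics) as tensor products with the reflection bimodules $B_s^{HC}$, combined with $\Lambda/\Lambda_r$-translations implemented by the $\Delta_x^{HC}$. The crucial representation-theoretic input, which must be extracted from \cite{BR}, is that left-tensoring by $B_s^{HC}$ over $\U^{\wedge_0}$ realizes the $s$-wall-crossing endofunctor of $\HC^G(\U^{\wedge_0})$; iterating this starting from $B_e^{HC} = \U^{\wedge_0}$ and using the $\Delta_x^{HC}$ to move between $W^a$-orbits in $\Lambda$, one reaches every $T(\lambda) \otimes \U^{\wedge_0}$ as a summand.

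The main obstacle is the key claim. The results of \cite{BR} guarantee the wall-crossing interpretation of $B_s^{HC}$ only in the form required to secure fullness, whereas here one needs it in a form strong enough to recover every indecomposable tilting $G$-module, across all blocks of $\Hilt^G(\U^{\wedge_0})$. This is exactly where the extended affine group $W^{ea} = (\Lambda/\Lambda_r) \ltimes W^a$ enters essentially: the $\Lambda/\Lambda_r$-twists supplied by $\Delta_x^{HC}$ are precisely what allow one to move between different $W^a$-orbits on $\Lambda$ and so reach every dominant $\lambda$, rather than only those in one fixed orbit. Once the wall-crossing reduction is in place, matching the labels $w \in W^{ea}$ on the two sides finishes the proof via Krull--Schmidt.
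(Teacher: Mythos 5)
Your reduction of the statement to essential surjectivity, and then to the claim that every $\pr_0(T(\lambda)\otimes\U^{\wedge_0})$ lies in the Karoubian monoidal closure of the $B_s^{HC}$ and $\Delta_x^{HC}$, is correct as far as it goes. But the ``key claim'' is where the entire content of the theorem sits, and your proposed mechanism for it does not work. Tensoring on the left with $B_s^{HC}$ over $\U^{\wedge_0}$ is a reflection functor for the \emph{left Harish-Chandra central character} of the bimodule; it is not the wall-crossing functor for the tilting $G$-module $T$ sitting in the first tensor factor of $T\otimes_\F\U^{\wedge_0}$. Andersen's alcove-by-alcove construction of the indecomposable tilting $G$-modules $T(\lambda)$ lives in $\operatorname{Rep}(G)$ and gives no information, by itself, about how the bimodule $\pr_0(T(\lambda)\otimes\U^{\wedge_0})$ decomposes or whether it is reached by iterating $\Theta_s=B_s^{HC}\otimes_{\U^{\wedge_0}}\bullet$ starting from $\U^{\wedge_0}$. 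Indeed, at this stage one does not even know a classification of the indecomposable objects of $\Hilt^G(\U^{\wedge_0})$ — that they are labelled by $W^{ea}$ is a consequence of the theorem, not an input — so there is no induction scheme to run. Your own closing paragraph concedes that \cite{BR} does not supply the needed statement; the proposal therefore restates the theorem rather than proving it.

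For comparison, the paper's proof of essential surjectivity is indirect and uses most of the machinery of the paper: the derived localization theorem $D^b(\HC^G(\U^{\wedge_0}))\simeq D^b(\Coh^{G^{(1)}}(\St^{(1)}_\Ring))$ (Theorem \ref{Thm:derived_loc_HC}), the resulting computation $K_0(\HC^G(\U)_0)\cong\Z W^{ea}$ (Proposition \ref{Prop:HC_K_group}), the perverse t-structure and the self-duality of its simple objects (Proposition \ref{Prop:perverse_K0}), and the abstract Lemma \ref{Lem:self_dual_embedding} forcing the full embedding $D^b(\Ocat)\hookrightarrow D^b(\HC^G(\U)_0)$ to be t-exact and send simples to simples. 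Essential surjectivity then drops out of the $K_0$-count (Theorem \ref{Thm:final_equivalence}): the image of $K_0(\Ocat)$ already generates $K_0(\Perv(\HC^G(\U)_0))$, so all simples, hence all objects, are hit. Since that equivalence factors through $K^b(\Hilt^G(\U)_0)$, one concludes that $\Ocat\operatorname{-tilt}\to\Hilt^G(\U)_0$ is an equivalence, and the $\Ring$-deformed statement follows by lifting indecomposables along $\bullet\otimes_\Ring\F$ using the flatness and base-change of Homs between HC-tilting objects (Corollary \ref{Cor:flat_hom}). If you want to salvage a direct argument along your lines, you would at minimum need an independent proof that the classes of the Bott--Samelson HC bimodules generate $K_0$ together with an $\Ext$-vanishing and Krull--Schmidt bookkeeping replacing the detour through $\Ocat$ — which is essentially reconstructing the paper's argument.
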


Here is a derived version of Theorem \ref{Thm:additive}.

\begin{Thm}\label{Thm:derived}
The equivalence $\SBim^\wedge\xrightarrow{\sim} \Hilt^G(\U^{\wedge_0})$ from Theorem \ref{Thm:additive}
extends to an exact monoidal equivalence $K^b(\SBim^{\wedge})\xrightarrow{\sim}
D^b(\HC^G(\U^{\wedge_0}))$ of triangulated categories.
\end{Thm}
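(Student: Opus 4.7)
My plan is to factor the desired equivalence through $K^b(\Hilt^G(\U^{\wedge_0}))$. Applying $K^b$ to the monoidal equivalence of Theorem \ref{Thm:additive} yields a monoidal triangulated equivalence $K^b(\SBim^\wedge) \xrightarrow{\sim} K^b(\Hilt^G(\U^{\wedge_0}))$, so it suffices to show that the tautological functor
$$K^b(\Hilt^G(\U^{\wedge_0})) \longrightarrow D^b(\HC^G(\U^{\wedge_0}))$$
is an exact monoidal equivalence. For this I would invoke the standard (Happel/Beilinson-type) criterion: the functor is an equivalence provided (i) $\Ext^{>0}_{\HC^G(\U^{\wedge_0})}(B,B') = 0$ for all $B, B' \in \Hilt^G(\U^{\wedge_0})$, and (ii) every object of $\HC^G(\U^{\wedge_0})$ admits a finite resolution by HC-tilts.

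\textbf{Verifying the criterion.} For (i): every HC-tilt is a direct summand of some $T \otimes_\F \U^{\wedge_0}$ with $T$ a tilting $G$-module, so by additivity one reduces to computing $\Ext^\bullet$ between two such bimodules. Writing $\Hom_{\HC^G(\U^{\wedge_0})}(-,-)$ as the composition of $G$-invariants with a bimodule Hom whose underlying $G$-module is locally finite, and using the right-tensor adjunction to absorb one factor as $T^* \otimes T'$, one reduces to the vanishing of $H^{>0}(G,-)$ on modules with a good filtration -- which holds since $p > h$ and $T^*\otimes T'$ is again tilting, hence good-filtered. Alternatively, once Theorem \ref{Thm:additive} is in hand, one could transport the Ext computation from the algebraic Soergel-type side, where the corresponding vanishing for $\SBim^\wedge$ inside its ambient highest-weight category $\Ocat_\Ring$ is classical. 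For (ii): any object of $\HC^G(\U^{\wedge_0})$ is a quotient of some $V \otimes_\F \U^{\wedge_0}$ with $V$ finite-dimensional, and $V$ is in turn a quotient of a tilting $G$-module; iterating yields a resolution by HC-tilts. Finiteness of this resolution is equivalent to finite global dimension of $\HC^G(\U^{\wedge_0})$, which I would deduce from the analogous statement for $\Ocat_\Ring$ (with $\HC^G$ admitting, blockwise, a Soergel-type embedding) together with the fact that only finitely many blocks intervene under the completion at $0$.

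\textbf{Monoidality and main obstacle.} Each HC-tilt, being a summand of $T\otimes_\F\U^{\wedge_0}$ with $T$ tilting, is free as a right (and, symmetrically, as a left) $\U^{\wedge_0}$-module, so convolution with it over $\U^{\wedge_0}$ is exact; concretely, $(T\otimes_\F\U^{\wedge_0})\otimes_{\U^{\wedge_0}}M\cong T\otimes_\F M$. Hence the monoidal structure $\otimes_{\U^{\wedge_0}}$ on $\Hilt^G(\U^{\wedge_0})$ agrees with the derived $\otimes_{\U^{\wedge_0}}^L$ under the embedding into $D^b(\HC^G(\U^{\wedge_0}))$, and (using that $T\otimes_\F T'$ is tilting for $p>h$) the category of HC-tilts is closed under convolution. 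The termwise monoidal structure on $K^b(\Hilt^G(\U^{\wedge_0}))$ is therefore transported to the derived monoidal structure on $D^b(\HC^G(\U^{\wedge_0}))$, and its compatibility with the equivalence of Theorem \ref{Thm:additive} is automatic on generators. The main obstacle I anticipate is condition (i): Ext-vanishing between HC-tilts is an intrinsic homological statement about $\HC^G(\U^{\wedge_0})$ that is not provided by Theorem \ref{Thm:additive} and requires either the direct $G$-cohomological argument above (exploiting good filtrations and $p>h$) or a reduction to a highest-weight category such as $\Ocat_\Ring$ where tilting-theoretic Ext-vanishing is classical.
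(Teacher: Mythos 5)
Your reduction to the two conditions (i) Ext-vanishing between HC-tilts and (ii) generation of $D^b(\HC^G(\U^{\wedge_0}))$ by HC-tilts is the right skeleton, and your treatment of (i) is essentially the paper's Proposition \ref{Prop:Ext_vanishing} together with Corollary \ref{Cor:flat_hom} (reduce to $\Ext^i_G(V,\U_0)$ for $V$ Weyl-filtered and use a left resolution of $\U_0$ by copies of the costandardly filtered $G$-module $\U$; note that $\U_0$ itself is a quotient of a good-filtered module and need not be good-filtered, so the resolution step is not optional). The monoidality discussion is also fine. The problem is condition (ii), which is where essentially all of the difficulty of the theorem lives, and your proposal for it does not work.

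First, ``every object admits a finite (left) resolution by HC-tilts'' is the wrong formulation: even in a finite highest weight category a costandard-type object has a finite tilting \emph{coresolution} but no finite one-sided tilting resolution, so iterating epimorphisms from HC-tilts will not terminate. What you actually need is that every object of $\HC^G(\U^{\wedge_0})$ lies in the thick subcategory generated by $\Hilt^G(\U^{\wedge_0})$, i.e.\ is quasi-isomorphic to a bounded two-sided complex of HC-tilts. Second, your justification is circular: transporting homological finiteness from $\Ocat_\Ring$ to $\HC^G(\U^{\wedge_0})$ requires knowing that the fully faithful functor $D^b(\Ocat_\Ring)\to D^b(\HC^G(\U^{\wedge_0}))$ is essentially surjective, which is precisely the statement being proved. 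Theorem \ref{Thm:additive} only controls the tilting objects, not arbitrary HC bimodules. (Moreover neither category has finite global dimension in the naive sense --- the poset $W^{ea}$ is infinite and the Steinberg variety is singular --- and finite global dimension would in any case not force a syzygy in a tilting resolution to be tilting.)

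The paper's proof of essential surjectivity is genuinely different and cannot be bypassed this way. It first establishes the specialized statement: the abelian equivalence $\Ocat\xrightarrow{\sim}\Perv(\HC^G(\U)_0)$ (Theorem \ref{Thm:final_equivalence}), proved via the perverse t-structure, the duality functor $\Dual$ fixing all simples, and the computation $K_0(\HC^G(\U)_0)\cong\Z W^{ea}$; this shows every $M\in\HC^G(\U)_0$ lies in the image of $K^b(\Hilt)$. It then deduces the statement over $\Ring$ by induction on the number of parameters $f_1,\dots,f_k$ of $\Ring$ annihilating $M$, using the adjunction triangle $\mathcal{G}\mathcal{F}M\to M\to N$ for $\mathcal{F}=R\Hom(T,\bullet)$ and a support/Nakayama argument to show $N\otimes^L_{\F[f]}\F=0$ forces $N=0$. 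To repair your proof you would need to supply an argument of comparable substance for generation; the homological finiteness claim as stated is not available.
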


We will also have the following result that actually plays an important role in
proving Theorems \ref{Thm:additive},\ref{Thm:derived}. This result should be
thought of as a localization theorem (a.l.a. \cite{BMR}) for Harish-Chandra
bimodules.

\begin{Thm}\label{Thm:localization} We have a monoidal exact equivalence of
triangulated categories $$D^b(\HC^G(\U^{\wedge_0}))\xrightarrow{\sim}
D^b(\Coh^{G^{(1)}}(\St^{(1)}_\Ring)).$$
\end{Thm}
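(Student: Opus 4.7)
The plan is to derive the equivalence from two applications of the Bezrukavnikov-Mirković-Rumynin (BMR) localization theorem — one for each side of a Harish-Chandra bimodule — and then identify the HC finiteness condition with a support condition cutting out the Steinberg inside a product of two Springer resolutions.

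First I would recall BMR in its completed, $G$-equivariant form: there is a monoidal exact equivalence
$$\Phi\colon D^b(G\text{-equivariant }\U^{\wedge_0}\text{-modules}) \xrightarrow{\sim} D^b(\Coh^{G^{(1)}}(\widetilde{\g}^{(1)}_\Ring)),$$
where $\widetilde{\g}^{(1)}_\Ring$ denotes the Frobenius twist of the Grothendieck-Springer resolution completed along the Springer fibre over $0\in \g^{(1)}$. The $G^{(1)}$-equivariance is obtained from the $G$-equivariance by quotienting out the infinitesimal $G_1$-action, which is recorded by the Frobenius (restricted) center. Now I would regard an object $M\in \HC^G(\U^{\wedge_0})$ as a left $\U^{\wedge_0}$-module equipped with a commuting right $\U^{\wedge_0}$-action and a compatible $G$-structure. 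Applying $\Phi$ to the right action converts $M$ into a $G^{(1)}$-equivariant complex of sheaves on $\widetilde{\g}^{(1)}_\Ring$ carrying a residual left $\U^{\wedge_0}$-action; applying $\Phi$ again to this left action produces a $G^{(1)}$-equivariant complex on $\widetilde{\g}^{(1)}_\Ring \times \widetilde{\g}^{(1)}_\Ring$.

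The Harish-Chandra condition — local finiteness of the adjoint $\g$-action — then forces this complex to be supported on the fibre product
$$\widetilde{\g}^{(1)}_\Ring \times_{\g^{(1)}} \widetilde{\g}^{(1)}_\Ring = \St^{(1)}_\Ring.$$
Indeed, on a HC bimodule the two actions of the Frobenius center (which factors through $\g^{(1)}$) are intertwined by the $G$-equivariance and must therefore coincide on the support; the corresponding matching for the Harish-Chandra center is automatic after completion at the zero character. Conversely, pushforward along the closed embedding $\St^{(1)}_\Ring \hookrightarrow \widetilde{\g}^{(1)}_\Ring\times \widetilde{\g}^{(1)}_\Ring$ is fully faithful and visibly lands in the essential image of the double-localization functor, yielding an inverse.

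The main obstacle will be to set up the two applications of BMR compatibly and then to verify that the resulting equivalence is monoidal — that convolution of $G^{(1)}$-equivariant sheaves along $\St^{(1)}_\Ring$ matches the tensor product over $\U^{\wedge_0}$ of HC bimodules. I would reduce the monoidality check to a set of Karoubi-generators from Theorem \ref{Thm:BR}: the reflection bimodules $B^{HC}_s$ and the standard bimodules $\Delta^{HC}_x$ for $x\in \Lambda/\Lambda_r$, whose geometric images should be identifiable as structure sheaves of graph-type subvarieties of the Steinberg (graphs of simple reflections in the former case, and of translations by lattice elements in the latter). Secondary technical points — finite cohomological dimension and boundedness of the derived equivalence after completion — should follow from the smoothness and properness of the Grothendieck-Springer map.
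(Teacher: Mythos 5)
There is a genuine gap, and it lies at the very center of your argument: you identify the Harish-Chandra condition with the support condition cutting out $\St^{(1)}_\Ring$ inside $\tilde{\g}^{(1)}_\Ring\times\tilde{\g}^{(1)}_\Ring$. That identification is not correct. The condition responsible for the support lying on the Steinberg is only the coincidence of the left and right actions of the $p$-center $\F[\g^{(1)*}]^\wedge$, and this already holds on the larger category $\overline{\HC}^G(\U^{\wedge_0})$ of \emph{weakly} $G$-equivariant bimodules with matching $p$-central actions. Double localization applied to that category is essentially the paper's starting point, the equivalence \eqref{eq:der_loc_St}: $D^b(\overline{\HC}^G(\U^{\wedge_0}))\simeq D^b(\Coh^{G}(\St^{(1)}_\Ring))$ -- note $G$-equivariance, not $G^{(1)}$-equivariance, and the paper points out that the forgetful functor $D^b(\Coh^{G^{(1)}}(\St^{(1)}_\Ring))\to D^b(\Coh^{G}(\St^{(1)}_\Ring))$ is \emph{not} fully faithful, so the two targets genuinely differ. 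The actual Harish-Chandra condition (strong equivariance: the adjoint $\g$-action is the differential of the $G$-action) is an extra condition beyond the matching of $p$-centers, and its geometric counterpart is not a support condition but the descent of the equivariance group from $G$ to $G^{(1)}$. Your remark that ``$G^{(1)}$-equivariance is obtained from the $G$-equivariance by quotienting out the infinitesimal $G_1$-action'' assumes exactly what has to be proved: for a merely weakly equivariant object the $G$-action does not factor through $G^{(1)}$, and showing that strong equivariance corresponds to such a factorization is the hardest step of the theorem.

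Concretely, the paper handles this by passing through the noncommutative Springer resolution: it proves (I) that the composed equivalence $D^b(\overline{\HC}^G(\U^{\wedge_0}))\simeq D^b(\Acal^{diag}_\Ring\operatorname{-mod}^G)$ is $t$-exact, which requires comparing the splitting bundle $\Ecal^{diag}$ with the tilting bundle $\Tcal_\h$ (Remark \ref{Rem:splitting_coincidence} and Lemma \ref{Lem:same_summands}); and then (II), at the abelian level, that the equivalence restricts to $\HC^G(\U^{\wedge_0})\simeq\Acal^{diag}_\Ring\operatorname{-mod}^{G^{(1)}}$, which is checked by reducing to the regular locus via flat covers and verifying that $\Ecal^{diag}$ is itself strongly equivariant. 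None of this machinery appears in your outline, and without it your construction lands in $D^b(\Coh^{G}(\St^{(1)}_\Ring))$ rather than $D^b(\Coh^{G^{(1)}}(\St^{(1)}_\Ring))$. Two smaller points: restricting to the honest (underived) fiber product requires the complete-intersection/flatness argument behind \eqref{eq:RGamma_St_deformed}, which you do not address; and monoidality cannot simply be ``checked on Karoubi generators'' -- in the paper it comes for free because every intermediate equivalence is monoidal by construction.
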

Here $G^{(1)}$ denotes the Frobenius twist of $G$ and $\St^{(1)}_\Ring$
is a version of the Steinberg variety formally defined  in
Section \ref{SS_der_loc}.

Theorems \ref{Thm:derived}, \ref{Thm:localization} combined together give
an equivalence
$$K^b(\SBim^\wedge)\xrightarrow{\sim} D^b(\Coh^{G^{(1)}}(\St^{(1)}_\Ring)).$$
This is a coherent realization of the triangulated version of the Hecke category.


The equivalence from Theorem \ref{Thm:derived} is t-exact with respect to perverse t-structures,
compare to \cite[Theorems 54,55]{B}.
To avoid technicalities we will only establish this for
specialized categories. This is actually the result that is used to
prove Theorems \ref{Thm:additive} and Theorem \ref{Thm:derived}. Let $\Ocat$
denote the specialization to $\F$ of the highest weight category $\Ocat_\Ring$ mentioned
in Section \ref{SS_Hecke_cat}. 
On the other hand, one can consider the central reduction $\U_0$ of $\U$
and the category  of HC $\U$-$\U_0$-bimodules with trivial central character on the left.
This category will be denoted by $\HC^G(\U)_0$. The derived category $D^b(\HC^G(\U)_0)$
comes  with a so called perverse t-structure, to be recalled in   Section
\ref{SS_perverse}. Let $\Perv(\HC^G(\U)_0)$ denote the heart of this t-structure.

\begin{Thm}\label{Thm:abelian}
The full embedding from Theorem \ref{Thm:BR} gives rise to an
equivalence of abelian categories $\Ocat\xrightarrow{\sim} \Perv(\HC^G(\U)_0)$
and an equivalence $D^b(\Ocat)\xrightarrow{\sim} D^b(\HC^G(\U)_0)$
of triangulated categories.
\end{Thm}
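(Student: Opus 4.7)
The plan is to construct the equivalence via the Ringel-type tilting theory for $\Ocat$ combined with the specialization of Theorem \ref{Thm:BR}. Since $\Ocat$ is a highest weight category (being the specialization to $\F$ of $\Ocat_\Ring$), its category of tilting objects $\tilt(\Ocat)$ coincides with the specialization of $\SBim^\wedge$ mentioned in Section \ref{SS_Hecke_cat}, and one obtains the standard tilting identification $D^b(\Ocat) \simeq K^b(\tilt(\Ocat))$. The embedding from Theorem \ref{Thm:BR} specializes to a fully faithful monoidal functor $\iota : \tilt(\Ocat) \hookrightarrow \HC^G(\U)_0$ whose image lies in $\Hilt^G(\U)_0$, and combining this with the tilting identification yields a candidate functor
$$\Phi: D^b(\Ocat) \simeq K^b(\tilt(\Ocat)) \xrightarrow{K^b(\iota)} K^b(\Hilt^G(\U)_0) \to D^b(\HC^G(\U)_0).$$

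I would next show that $\Phi$ is t-exact with respect to the standard t-structure on $D^b(\Ocat)$ and the perverse t-structure on $D^b(\HC^G(\U)_0)$. To this end, I would verify that the images of the standard objects $\Delta_x$ and costandard objects $\nabla_x$ of $\Ocat$ lie in the perverse heart, identifying them with perverse standards and costandards. The base cases are the generating elements: for $x\in \Lambda/\Lambda_r$, the match $\Phi(\Delta_x)\simeq \Delta^{HC}_x$ follows from Theorem \ref{Thm:BR}; for a simple affine reflection $s$, the perverse standard and costandard are extracted from $B^{HC}_s$ via distinguished triangles encoding the highest weight structure. The general case proceeds by induction on the length of $x\in W^{ea}$, using the monoidality of $\iota$ together with a reduced expression to convolve generating standards. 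Once t-exactness is in hand, taking hearts produces an exact functor $\Ocat \to \Perv(\HC^G(\U)_0)$; this is an equivalence because both sides are highest weight with the same indexing poset $(W^{ea},\leq)$ and the functor sends standards to standards and tiltings to (indecomposable) tiltings.

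The derived equivalence then follows provided one also knows that $\Phi$ itself (not just its restriction to hearts) is an equivalence. For this, it suffices to show that $\Hilt^G(\U)_0$ generates $D^b(\HC^G(\U)_0)$ in the triangulated sense, which is a perverse-tilting existence statement following formally from the established highest weight structure on $\Perv(\HC^G(\U)_0)$. \emph{The main obstacle} will be the t-exactness verification: the perverse t-structure on $D^b(\HC^G(\U)_0)$ is assembled from a Bruhat-stratified gluing, and controlling its interaction with $\Phi$ requires precise $\Ext$-vanishing information between standard and costandard perverse objects in $\HC^G(\U)_0$. Such vanishing has to be extracted from the affine Hecke-category side via Theorem \ref{Thm:BR}, and its careful bookkeeping across the gluing is the most delicate point of the argument.
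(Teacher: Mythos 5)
Your construction of the candidate functor matches the paper's: the specialization $\Ocat\operatorname{-tilt}\hookrightarrow\Hilt^G(\U)_0$, the identification $D^b(\Ocat)\simeq K^b(\Ocat\operatorname{-tilt})$, and the full embedding $K^b(\Hilt^G(\U)_0)\hookrightarrow D^b(\HC^G(\U)_0)$ (the latter via the Ext-vanishing of Proposition \ref{Prop:Ext_vanishing}). But the step you yourself flag as the main obstacle --- t-exactness --- is where your route has a genuine gap. You propose to show that the images of $\Delta(x)$ and $\nabla(x)$ are ``perverse standards and costandards,'' extracted by induction from the $B_s^{HC}$ and controlled by Ext-vanishing across a ``Bruhat-stratified gluing.'' The perverse t-structure here is not defined by gluing over Bruhat strata; it is a middle-perversity support condition (\ref{eq:perverse_spec_defn}) on the nilpotent cone, \`a la Arinkin--Bezrukavnikov. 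More importantly, the existence of a highest weight structure on $\Perv(\HC^G(\U)_0)$ with poset $W^{ea}$, and the identification of its standard objects, is essentially the \emph{content} of the theorem, not an available input; so your argument for why the heart functor is an equivalence (``both sides are highest weight with the same poset'') and for generation by $\Hilt^G(\U)_0$ is circular as stated.

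The idea that closes this gap in the paper is a self-duality argument that bypasses standards and costandards entirely. One first computes $K_0(\HC^G(\U)_0)\cong\Z W^{ea}$ $W^{ea}$-equivariantly (Proposition \ref{Prop:HC_K_group}), which requires the derived localization theorem $D^b(\HC^G(\U)_0)\simeq D^b(\Coh^{G^{(1)}}(\St^{(1)}))$ and the $K$-theory of the Steinberg variety --- a substantial input absent from your outline. From this one deduces that $\Dual$ acts as the identity on $K_0$ and hence, since perverse bimodules have finite length, that \emph{every simple perverse HC bimodule is self-dual} (Proposition \ref{Prop:perverse_K0}). The purely formal Lemma \ref{Lem:self_dual_embedding} then shows that any exact \emph{full} embedding intertwining t-exact dualities, whose target heart has all simples self-dual, is automatically t-exact and sends simples to simples: a failure of t-exactness would produce a nonzero map $M'[i]\to\Dual M'[-i]$ with $i>0$, which is impossible. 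Essential surjectivity on hearts then again comes from the $K_0$ computation (the class of $\U_0$ generates under the $K_0(\SBim^\wedge)$-action), and the derived equivalence follows. So while your skeleton for constructing $\Phi$ is correct, the mechanism that actually proves t-exactness and surjectivity is missing, and the replacement you sketch would not go through without already knowing the conclusion.
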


We note that many results mentioned above were also independently obtained by Bezrukavnikov
and Riche, \cite{BR_new}.

\subsection{Category $\Ocat^{cl}$}
Another category we consider is the modular version of the classical BGG category $\Ocat$
to be denoted by $\Ocat^{cl}$. By definition, this is a category of finitely generated
strongly $B$-equivariant $\U$-modules, where, as usual, $B$ denotes a Borel subgroup
of $G$. The category $\Ocat^{cl}$ splits into blocks, let $\Ocat^{[0]}$ denote the principal one. We will see below that the simple objects in $\Ocat^{[0]}$ are indexed by the elements of $W^{ea}$. An important difference
of $\Ocat^{[0]}$ from the categories $\Ocat$ and $\HC^G(\U)_0$ is that it is ``periodic'':
the twist with any character of the Frobenius twist $B^{(1)}$ gives a self-equivalence
of $\Ocat^{[0]}$. We can think of the character lattice of $B^{(1)}$ as the lattice
$\Lambda\subset W^{ea}$. On the level of labels of simple objects, the equivalence
corresponding to $\lambda\in \Lambda$ acts on $W^{ea}$ by the right shift by $t_\lambda$.

Here is a basic (and easy) result relating the category $\Ocat^{[0]}$ to the categories
mentioned in the previous sections.

\begin{Prop}\label{Prop:O_HC}
We have a derived equivalence $D^b(\HC^G(\U)_0)\xrightarrow{\sim} D^b(\Ocat^{[0]})$.
\end{Prop}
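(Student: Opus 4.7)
The plan is to realize this via a modular incarnation of the classical Bernstein--Gelfand tensor functor. Fix a ``reference'' Verma module $N_0\in \Ocat^{[0]}$ — a natural choice is $N_0= \U_0\otimes_{U(\mathfrak{b})}\F_{-2\rho}$, the antidominant Verma with its tautological strongly $B$-equivariant structure. Define
$$\Phi:D^b(\HC^G(\U)_0)\to D^b(\Ocat^{[0]}),\qquad \Phi(B)=B\otimes^L_{\U_0}N_0.$$
The left $\U$-action on $B$ factors through $\U_0$, and the $G$-equivariance of $B$ combines with the strong $B$-equivariance of $N_0$ to make $\Phi(B)$ a strongly $B$-equivariant $\U_0$-module, placing it in $\Ocat^{[0]}$.

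The right adjoint should be $\Psi(M)=R\Hom_{\U_0}(N_0,M)^{G\text{-l.f.}}$, the $G$-locally finite part of the derived Hom complex, carrying a natural $(\U,\U_0)$-bimodule structure. Adjointness at the derived level is standard, so to show the adjunction is an equivalence it suffices to check the unit and counit on compact generators of each side. The category $\Ocat^{[0]}$ is generated by the translates $V\otimes N_0$ with $V$ a finite-dimensional $G$-module, while $\HC^G(\U)_0$ is generated by the ``free'' bimodules $V\otimes \U_0$. Since $\Phi(V\otimes\U_0)=V\otimes N_0$, essential surjectivity of $\Phi$ is immediate; the nontrivial content lies in verifying the unit $B\to \Psi\Phi(B)$ on $B=V\otimes \U_0$.

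This reduces to a derived ``Endomorphismensatz''
$$R\Hom_{\U_0}(N_0,N_0)^{G\text{-l.f.}}\cong \U_0.$$
The main obstacle is this identification. My approach is to combine a BGG-type resolution of the simple head of $N_0$ by Vermas in $\Ocat^{[0]}$ with Ext-vanishing between Vermas in antidominant order, thereby reducing the computation of $R\Hom$ to ordinary $\Hom$ of $N_0$ with itself, and then to invoke the classical identification of the $G$-locally finite endomorphisms of an antidominant Verma with $\U_0$. Once this Endomorphismensatz is in place, the derived equivalence follows from the standard criterion that an adjoint pair of triangulated functors between compactly generated categories is an equivalence provided it is fully faithful on a set of compact generators that maps to compact generators.
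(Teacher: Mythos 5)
Your functor is exactly the one the paper uses ($\bullet\otimes^L_{\U_0}\Delta^{cl}(-2\rho)$, see Proposition \ref{Prop:BG_derived}), but the route you propose for proving it is an equivalence has a genuine gap at its central step, the ``derived Endomorphismensatz'' $R\Hom_{\U_0}(N_0,N_0)^{G\text{-l.f.}}\cong \U_0$. The tools you invoke for it are characteristic-zero tools that are not available here. In characteristic $p$ the block $\Ocat^{[0]}$ is governed by the (dilated) affine Weyl group orbit of $0$, which has no minimal element: $-2\rho=w_0\cdot 0$ is antidominant only for the finite Weyl group, and $\Delta^{cl}(-2\rho)$ has infinite length, is not simple, not projective, and not costandard in $\Ocat^{[0]}$ (the paper notes that $\nabla^{cl}(\lambda)$ does not even lie in $\Ocat^{cl}$). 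So there is no ``Ext-vanishing between Vermas in antidominant order'' of the highest-weight-category type to appeal to, BGG resolutions are not available in the required generality, and the ``classical identification of the $G$-locally finite endomorphisms of an antidominant Verma with $\U_0$'' is the Bernstein--Gelfand/Duflo statement in characteristic $0$; its characteristic-$p$ analogue for $\Delta^{cl}(-2\rho)$ is not an off-the-shelf input but is essentially equivalent to what you are trying to prove. Concretely, the identity $R\Hom_{\U_0}(N_0,V\otimes N_0)^{G\text{-l.f.}}\cong V\otimes\U_0$ amounts, after localization, to $R\Gamma(D_{\mathcal{B}})=\U_0$ (no higher cohomology and the correct global sections), which rests on the geometric inputs of \cite{BMR} (vanishing of higher cohomology of $\mathcal{O}_{\tilde{\Nilp}^{(1)}}$ for $p>h$, normality of the nilpotent cone, the Azumaya/Frobenius analysis). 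Your argument as written would be circular or would simply import the localization theorem through the back door.

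The paper avoids the Endomorphismensatz entirely: it quotes the Bezrukavnikov--Mirkovi\'c--Rumynin derived localization equivalence $R\Gamma:D^b(\Coh(D_{\mathcal{B}}^{opp}))\rightleftarrows D^b(\U_0^{opp}\operatorname{-mod}):\bullet\otimes^L_{\U_0}D_{\mathcal{B}}$, lifts it to weakly $G$-equivariant categories, and composes with the elementary exact equivalence $\Coh^{G}(D_{\mathcal{B}}^{opp})\simeq\Ocat^{cl}$ (fiber at $1B$), which sends $D_{\mathcal{B}}$ to $\Delta^{cl}(-2\rho)$; restriction to the block $[0]$ is then automatic from central characters. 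If you want to keep your adjunction framework, you must replace the BGG/antidominance argument by this localization input. Two secondary points also need care: the criterion ``fully faithful on compact generators that map to compact generators'' is a statement about compactly generated (unbounded) triangulated categories, whereas here you work with bounded derived categories of abelian categories whose objects can have infinite length, so generation of $D^b(\Ocat^{[0]})$ by the objects $V\otimes N_0$ as a triangulated category needs its own argument; and the right adjoint $R\Hom_{\U_0}(N_0,\bullet)^{G\text{-l.f.}}$ requires a justification that taking $G$-locally finite vectors derives correctly.
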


Thanks to this proposition and Theorem \ref{Thm:abelian}, we get a derived equivalence
\begin{equation}\label{eq:derived_equiv_Os}
D^b(\Ocat^{[0]})\xrightarrow{\sim}D^b(\Ocat).
\end{equation}

%


\subsection{Noncommutative Springer resolution}
An important ingredient in the proof of several results, which is also of independent interest,
is the Noncommutative Springer resolution, \cite{BM}. In our context, this is
an $\F[\g^{*(1)}]\otimes_{\F[\g^{*(1)}]^{G^{(1)}}}\Ring$-algebra $\Acal_\Ring$ that serves as a noncommutative
resolution for various objects associated to the nilpotent cone of $\g^{(1)}$. For example,
we can consider the specialization
$$\Acal:=\Acal_\Ring\otimes_{\Ring}\F,$$
this is an algebra over
$\F[\Nilp^{(1)}]$, where $\Nilp^{(1)}$ is the nilpotent cone in $\g^{*(1)}$. Let $\tilde{\Nilp}^{(1)}$
be the Springer resolution of $\Nilp^{(1)}$. Then $\Acal$ is a noncommutative resolution of
$\Nilp^{(1)}$ meaning that there is an $\F[\Nilp^{(1)}]$-linear derived equivalence
$D^b(\Coh(\tilde{\Nilp}^{(1)}))\xrightarrow{\sim}D^b(\Acal\operatorname{-mod})$.

It turns out that the categories $\HC^G(\U)_0, \Ocat^{[0]}$ can be interpreted via $\Acal$.
Let $\pi:\tilde{\Nilp}^{(1)}\rightarrow \g^{*(1)}$ denote the natural map. So we have
a $G^{(1)}$-equivariant sheaf of algebras $\pi^*\Acal_\Ring$ over $\tilde{\Nilp}^{(1)}$
and consider the category $\Coh^{G^{(1)}}(\pi^*\Acal_\Ring)$.

\begin{Thm}\label{Thm:HCO_NCS}
We have equivalences of abelian categories
\begin{align}\label{eq:HC_NCS_equiv}&\HC^G(\U)_0\xrightarrow{\sim}
\Acal_\Ring\otimes_{\F[\g^{*(1)}]}\Acal^{opp}\operatorname{-mod}^{G^{(1)}},\\\label{eq:O_NCS_equiv}
&\Ocat^{[0]}\xrightarrow{\sim} \Coh^{G^{(1)}}(\pi^*\Acal_\Ring).
\end{align}
\end{Thm}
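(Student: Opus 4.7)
The plan is to derive both equivalences by combining the coherent description of Harish-Chandra bimodules from Theorem \ref{Thm:localization} with the Bezrukavnikov-Mirkovic noncommutative Springer equivalence, and then promoting the resulting derived equivalences to abelian ones via a comparison of t-structures. The starting point in both cases is that BMR-type localization turns $\U^{\wedge_0}$-modules (resp.\ $\U_0$-modules) with appropriate equivariance into coherent sheaves on a completed (resp.\ specialized) Grothendieck-Springer space, and the noncommutative Springer resolution identifies such coherent sheaves with modules over $\Acal_\Ring$ (resp.\ $\Acal$).

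For \eqref{eq:HC_NCS_equiv}, I would start from the identification $D^b(\HC^G(\U^{\wedge_0})) \xrightarrow{\sim} D^b(\Coh^{G^{(1)}}(\St^{(1)}_\Ring))$ of Theorem \ref{Thm:localization}. To pass to $\HC^G(\U)_0$, whose objects have a generalized trivial HC character on the left and a strict $\U_0$-action on the right, I would specialize one factor of $\St^{(1)}_\Ring$ from its completed form to $\tilde{\Nilp}^{(1)}$. The noncommutative Springer equivalence (in its $\Ring$-linear family version) then replaces $\Coh^{G^{(1)}}$ on each factor by modules over $\Acal_\Ring$ on the completed side and $\Acal$ on the specialized side. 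The fiber product in the Steinberg becomes the tensor product of algebras over the Frobenius coordinate ring $\F[\g^{*(1)}]$; this reflects the fact that the two Frobenius central characters of a Harish-Chandra bimodule coincide. To upgrade to an abelian equivalence, I would combine the perverse-vs-natural t-exactness embedded in Theorem \ref{Thm:abelian} with the Bezrukavnikov-Mirkovic matching of the exotic t-structure on $D^b(\Coh^{G^{(1)}}(\tilde{\Nilp}^{(1)}))$ with the natural t-structure on $\Acal\operatorname{-mod}^{G^{(1)}}$.

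For \eqref{eq:O_NCS_equiv}, I would argue in parallel using a one-sided localization. In the principal block, strongly $B$-equivariant $\U$-modules have generalized trivial HC central character, so a BMR-type argument identifies $D^b(\Ocat^{[0]})$ with $D^b(\Coh^{G^{(1)}}(\tilde{\Nilp}^{(1)}))$ carrying an exotic-type t-structure and an $\Ring$-worth of completion along the HC center. Pulling $\Acal_\Ring$ back along $\pi:\tilde{\Nilp}^{(1)}\to\g^{*(1)}$ and invoking the noncommutative Springer equivalence turns this into $\Coh^{G^{(1)}}(\pi^*\Acal_\Ring)$ with its natural abelian structure, because exotic $\leftrightarrow$ natural under the Bezrukavnikov-Mirkovic equivalence. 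Alternatively, one can deduce \eqref{eq:O_NCS_equiv} from \eqref{eq:HC_NCS_equiv} via Proposition \ref{Prop:O_HC}, by tensoring a Harish-Chandra bimodule against a fixed module that realizes the one-sided Springer picture and tracking the abelian structures across the equivalence of Theorem \ref{Thm:abelian}.

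The main obstacle is not the derived equivalences, which follow formally from Theorem \ref{Thm:localization}, BMR, and the noncommutative Springer equivalence, but rather the verification of t-exactness. One must check that the perverse t-structure on $D^b(\HC^G(\U)_0)$ and the tautological t-structure on $D^b(\Ocat^{[0]})$ both transport under the composed coherent--NC-Springer equivalences to the standard t-structures on modules over $\Acal_\Ring \otimes_{\F[\g^{*(1)}]} \Acal^{opp}$ and $\pi^*\Acal_\Ring$ respectively. This rests on the defining property of the Bezrukavnikov-Mirkovic equivalence that the exotic t-structure on the Springer side is the one matching natural $\Acal$-modules, and on carefully propagating this through the $\Ring$-linear family, the specialization of one factor, and the bimodule and equivariance structures.
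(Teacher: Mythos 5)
Your skeleton for the derived equivalences is fine, and you correctly locate the difficulty in t-exactness, but the mechanisms you propose for establishing t-exactness are not the ones that work, and one of them is circular. For (\ref{eq:HC_NCS_equiv}) the equivalence is between the \emph{tautological} abelian categories on both sides, so the perverse t-structure is irrelevant; moreover, invoking Theorem \ref{Thm:abelian} would be circular, since its proof (via Propositions \ref{Prop:HC_K_group} and \ref{Prop:perverse_K0}) already relies on (\ref{eq:HC_NCS_equiv}) through (\ref{eq:HC_derived_specialized}). The actual argument, which is part (I) of the proof of Theorem \ref{Thm:derived_loc_HC} followed by the specialization in Remark \ref{Rem:derived_loc_HC}, is this: every object of $\overline{\HC}^G(\U^{\wedge_0})$ has support meeting the formal neighborhood of $0$ in $\g^{*(1)}$, so t-exactness may be checked after completing along $\g^{*(1)}$; there the Azumaya splitting bundle $\Ecal^{diag}$ agrees with $\Ecal'\boxtimes \Ecal'^{*}$ (Remark \ref{Rem:splitting_coincidence}) and $\Ecal'$ has the same indecomposable summands as the tilting bundle $\Tcal_\h^{\wedge_\g}$ (Lemma \ref{Lem:same_summands}), so the composite of ``split the Azumaya algebra'' and ``apply the tilting equivalence'' is a Morita-type equivalence and hence tautologically t-exact. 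You also omit part (II): matching $G^{(1)}$-equivariance on the $\Acal$-side with the Harish-Chandra (strong $G$-equivariance) condition on the $\U$-side, which the paper does by restricting to the regular locus and verifying that $\Ecal^{diag}$ is strongly equivariant.

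For (\ref{eq:O_NCS_equiv}) the appeal to ``exotic $\leftrightarrow$ natural'' is also not the right mechanism: the target $\Coh^{G^{(1)}}(\pi^*\Acal_\Ring)$ carries the \emph{coherent} t-structure on the $\tilde{\Nilp}^{(1)}$ factor, not the exotic one, and the tautological t-structure on $\Ocat^{[0]}$ is not transported by any formal property of the Bezrukavnikov--Mirkovic equivalence. What the paper actually does (proof of Theorem \ref{Thm:Ocat_cl_main}) is characterize the images of both $D^{b,\leqslant 0}(\Ocat^{[0]})$ and $D^{b,\leqslant 0}(\Coh^{G^{(1)}}(\pi^*\Acal_\Ring))$ inside $D^b(\Coh^{G^{(1)}}\St^{(1)})$ by the same asymptotic condition ($\heartsuit$): that twisting by $\pi_2^*\mathcal{O}(n\lambda)$ for all sufficiently large $n$ lands in $D^{b,\leqslant 0}$ of the bimodule category. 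This requires Serre vanishing on both sides and, crucially, identifying the twist by $\mathcal{O}_{\B}(np\lambda)$ on the $D$-module side with right tensoring by the explicit Harish-Chandra bimodule $\mathcal{M}_n=\Gamma(\mathcal{O}_{\B}(np\lambda)\otimes_{\mathcal{O}_{\B}}D_{\B})$, and then computing that $\mathcal{M}_n$ corresponds under the noncommutative Springer equivalences to $\mathcal{O}(n\lambda)$ on the diagonal copy of $\tilde{\Nilp}^{(1)}$. Your second suggested route via Proposition \ref{Prop:O_HC} is indeed the one the paper takes at the derived level (Proposition \ref{Prop:BG_derived}), but ``tracking the abelian structures across Theorem \ref{Thm:abelian}'' again points at the perverse t-structure, which is not the one that matches the tautological t-structure of $\Ocat^{[0]}$ under the Bernstein--Gelfand functor; the ($\heartsuit$) argument is the missing content.
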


We would like to point out that the direct characteristic $0$ analog of
$\Coh^{G^{(1)}}(\pi^*\Acal_\Ring)$ has appeared before, in the paper \cite{BLin}.
In that paper the corresponding t-structure on the triangulated version
of the affine Hecke category (a characteristic $0$ analog of $D^b(\Ocat)$) was shown to coincide with the ``new'' t-structure
of Frenkel and Gaitsgory \cite{FG}. The equivalence (\ref{eq:O_NCS_equiv})
can be used to equip the heart of the new t-structure of the affine Hecke category
with a highest weight structure. In a subsequent paper we plan to use this fact to
prove a ``localization theorem'' for a category $\Ocat$ over a quantum group at a root of unity.

{\bf Acknowledgements}. I would like to thank Roman Bezrukavnikov, Gurbir Dhillon, Simon Riche, and Geordie Williamson for stimulating discussions. Mosto of this work was done during my participation in the special year on Geometric and Modular Representation Theory at the IAS in Spring 2021 were many topics related to this paper were discussed. I am grateful to the IAS and the program organizers and participants
for this valuable experience. My work was partially supported by the NSF
under grant DMS-2001139.

\section{Preliminaries: localization theorems}
In this section we recall the derived localization theorem from \cite{BMR}
and some related developments from \cite{BM}.
\subsection{Derived localization}\label{SS_der_loc}
Let $\tilde{\g}$ denote the Grothendieck-Springer resolution of
\begin{equation}\label{eq:notation_g_h}
\g^*_\h:=\g^*\otimes_{\h^*/W}\h^*.
\end{equation}
Set $\St_{\h}:=\tilde{\g}\times_{\g^*}\tilde{\g}$. We also consider the ``completed'' version
\begin{equation}\label{eq:notation_St_R}
\St_{\Ring}:=(\h^*/W)^{\wedge}\times_{\h^*/W}\St_\h
\end{equation}
(here and below $\bullet^\wedge$
denote the completion at $0$, e.g., in this particular case $(\h^*/W)^{\wedge}=
\Spec(\Ring^W)$). Note that $G$ naturally
acts on $\tilde{\g}, \St_{\h},\St_{\Ring}$. Note also that $\St_{\Ring}$ comes with a natural
morphism to $\h^{*\wedge}\times_{\h^{*\wedge}/W}\h^{*\wedge}$.

We can apply the Frobenius twist to all objects in the previous paragraph getting
schemes $\tilde{\g}^{(1)},\St_{\h}^{(1)},\St_{\Ring}^{(1)}$. They come with
an action of $G^{(1)}$. Note that the Artin-Schreier map $\h^*\rightarrow \h^{*(1)}$ is etale
hence identifies $\F[\h^{(1)*}]^{\wedge_0}$ with $\Ring$. In particular,
$\St^{(1)}_{\Ring}$ is still a scheme over $\h^{*\wedge}\times_{\h^{*\wedge}/W}\h^{*\wedge}$.

Consider the categories  $D^b(\Coh^{G}(\St^{(1)}_\h)), D^b(\Coh^{G}(\St_{\Ring}^{(1)}))$.
The categories are monoidal with respect to convolution of coherent sheaves. We can also consider the versions for $G^{(1)}$ instead of $G$, e.g., $D^b(\Coh^{G^{(1)}}(\St_{\Ring}^{(1)}))$.
We have a natural t-exact monoidal functor
$$D^b(\Coh^{G^{(1)}}(\St_{\Ring}^{(1)}))\rightarrow D^b(\Coh^{G}(\St_{\Ring}^{(1)}))$$
but it is not fully faithful (although its restriction to the hearts of default t-structures
is).
We have the completion (=pullback) functor
$$D^b(\Coh^{G}(\St^{(1)}_\h))\rightarrow D^b(\Coh^{G}(\St_{\Ring}^{(1)})),$$
it is monoidal.

We now recall a certain Azumaya algebra on
$$\tilde{\g}^{(1)}_\Ring:=\tilde{\g}^{(1)}\times_{\h^{(1)*}}\Spec(\Ring).$$

Let $U\subset B$ be the maximal unipotent and Borel subgroups of $G$. We will
write $\mathcal{B}$ for the flag variety $G/B$.
Consider the sheaf $D_{G/U}$
of differential operators on $G/U$. Let $\eta:G/U\twoheadrightarrow G/B$
be the projection. Consider the sheaf $\Dcal_{\h}:=(\eta_* D_{G/U})^T$.
This is an Azumaya algebra on $\tilde{\g}_\h^{(1)}\times_{\h^{*(1)}}\h^*$,
see, e.g., \cite[Sections 2.3, 3.1.3]{BMR}.
For $\mu\in \h^*$, consider the completion
\begin{equation}\label{eq:notation_D_mu}
\Dcal^{\wedge_\mu}:=\F[\h^*]^{\wedge_\mu}\otimes_{\F[\h^*]}\Dcal_{\h}.
\end{equation}
This completion can be viewed as an Azumaya algebra on $\tilde{\g}_{\Ring}^{(1)}$.

Set
\begin{equation}\label{eq:notation_U_mu}\U^{\wedge_\mu}:=\F[\h^*]^{\wedge_\mu}\otimes_{\F[\h^*/(W,\cdot)]}\U,
\end{equation}
where the quotient is taken for the dot-action (i.e., the $\rho$-shifted action)
of $W$.
By \cite[Proposition 3.4.1]{BMR}, $R\Gamma(\Dcal^{\wedge_\mu})=\U^{\wedge_\mu}$.
If $\mu$ is regular (for the dot action), then we have a category equivalence
\begin{equation}\label{eq:derived_loc_equiv}
R\Gamma: D^b(\Coh(\Dcal^{\wedge_\mu}))
\rightarrow D^b(\U^{\wedge_\mu}\operatorname{-mod}),
\end{equation}
 this is essentially \cite[Theorem 3.2]{BMR}.

The sheaves $\Dcal^{\wedge_\mu}$ for different $\mu$ with integral difference are Morita equivalent.
Now choose $\mu$ represented by a character of $T$ and, abusing the notation, denote the character also
by $\mu$. Then we have
a Morita equivalence between
$\Dcal^{\wedge_0}$ and $\Dcal^{\wedge_\mu}$  given by
\begin{equation}\label{eq:Morita_D_modules}
\Ocal_{\mathcal{B}}(\mu)\otimes_{\Ocal_{\mathcal{B}}}\bullet:\Coh(\Dcal^{\wedge_0})\rightarrow \Coh(\Dcal^{\wedge_\mu}).
\end{equation}
We also note that we have an algebra isomorphism
\begin{equation}\label{eq:D_alg_iso}
\Dcal^{\wedge_\mu}\cong \mathcal{O}_{\mathcal{B}}(\mu)\otimes \Dcal^{\wedge_0}\otimes
\mathcal{O}_{\mathcal{B}}(-\mu).
\end{equation}

 We will need the case of $\mu=-\rho$. We have a cover $\tilde{G}$ of $G$ (for example, the simply connected one) such that
$\rho$ is a weight for a maximal torus $\tilde{T}$ of $\tilde{G}$. The resulting Morita equivalence (\ref{eq:Morita_D_modules}) lifts to $\tilde{G}$-equivariant categories.

Now consider the algebra $\U^{\wedge_{-\rho}}$
and the sheaf $\Dcal^{\wedge_{-\rho}}$. It was shown in \cite[Proposition 5.2.1]{BMR} that
$\U^{\wedge_{-\rho}}$ is an Azumaya algebra on $\g^{*(1)}_{\Ring}$ and $\Dcal^{\wedge_{-\rho}}$
is obtained from $\U^{\wedge_{-\rho}}$ via pullback under the resolution morphism
$\tilde{\g}^{(1)}_{\Ring}\rightarrow \g^{*(1)}_{\Ring}$. It follows that the
restriction of $\Dcal^{\wedge_{-\rho}}\boxtimes \left(\Dcal^{\wedge_{-\rho}}\right)^{opp}$
to $\St_{\Ring}^{(1)}\subset \tilde{\g}^{(1)}_\Ring\times \tilde{\g}^{(1)}_\Ring$ (to be denoted by $\Dcal^{\wedge_{-\rho}}\boxtimes \left(\Dcal^{\wedge_{-\rho}}\right)^{opp}|_{\St}$) is $G$-equivariantly split with splitting bundle $\Ecal^{diag}_{-\rho}$
obtained by pulling back $\U^{\wedge_{-\rho}}$ under the morphism
$\St_{\Ring}^{(1)}\rightarrow \g^{*(1)}_{\Ring}$. Now note that
$\Dcal^{\wedge_{-\rho}}\boxtimes \left(\Dcal^{\wedge_{-\rho}}\right)^{opp}$
is Morita equivalent to $\Dcal^{\wedge_{0}}\boxtimes \left(\Dcal^{\wedge_{0}}\right)^{opp}$
via
\begin{equation}\label{eq:Morita_diagonal}
\Ocal_{\B\times \B}(\rho,-\rho)\otimes_{\Ocal_{\B\times\B}}\bullet:
\Coh^G(\Dcal^{\wedge_{-\rho}}\boxtimes \left(\Dcal^{\wedge_{-\rho}}\right)^{opp}|_{\St})
\xrightarrow{\sim} \Coh^G(\Dcal^{\wedge_{0}}\boxtimes \left(\Dcal^{\wedge_{0}}\right)^{opp}|_{\St}).
\end{equation}
While $\rho$ may fail to be
a character of $T$, the line bundle $\Ocal_{\B\times \B}(\rho,-\rho)$ is $G$-equivariant.

\begin{defi}\label{defi:Ecal}
Applying (\ref{eq:Morita_diagonal}) to $\Ecal^{diag}_{-\rho}$
we get a $G$-equivariant splitting bundle for the restriction
$\Dcal^{\wedge_{0}}\boxtimes \left(\Dcal^{\wedge_{0}}\right)^{opp}|_{\St}$, denote it
by $\Ecal^{diag}$.
\end{defi}

Now consider the scheme
\begin{equation}\label{eq:notation_g_wedge_g}
\tilde{\g}^{(1),\wedge_\g}:=\operatorname{Spec}(\F[[\g^{(1)*}]])\times_{\g^{(1)*}}\tilde{\g}^{(1)}_\Ring.
\end{equation}
The pullback of $\U^{\wedge_{-\rho}}$ to $\g^{*(1),\wedge_\g}_{\Ring}$ splits. Note that all splitting bundles for $\U^{\wedge_{-\rho}}$ are isomorphic (because it is a split Azumaya algebra over a complete local ring). Let $\Ecal'_{-\rho}$ denote the pullback of this splitting bundle to $\tilde{\g}^{(1),\wedge_\g}$. It is a splitting
bundle for the pullback $\Dcal^{\wedge_{-\rho,\g}}$ of $\Dcal^{\wedge_{-\rho}}$ to $\tilde{\g}^{(1),\wedge_\g}$. Applying a Morita equivalence analogous to (\ref{eq:Morita_D_modules})
to $\Ecal'_{-\rho}$ we get a splitting bundle for $\Dcal^{\wedge,0}$ to be denoted by $\Ecal'$.

\begin{Rem}\label{Rem:splitting_coincidence}
By the construction, the restrictions of $\Ecal_{diag}$ and $\Ecal'\boxtimes \Ecal'^*$ to
$$\St^{(1)}_\Ring\cap (\tilde{\g}^{(1),\wedge_\g}\times \tilde{\g}^{(1),\wedge_\g})$$
are isomorphic.
\end{Rem}

We proceed to a discussion of derived equivalences.
If $\mu$ is regular (for the dot action), then
$$R\Gamma: D^b(\Coh(\Dcal^{\wedge_\mu}))
\rightarrow D^b(\U^{\wedge_\mu}\operatorname{-mod})$$
is a category equivalence, this is essentially \cite[Theorem 3.2]{BMR}.
For example, we can take $\mu=0$. This gives rise to the following equivalences:
\begin{align}\label{eq:derived_equivalence_0}
&R\Gamma(\mathcal{E}'\otimes \bullet): D^b(\Coh(\tilde{\g}^{(1),\wedge_\g}))\xrightarrow{\sim}
D^b(\U^{\wedge_0,\g}\operatorname{-mod}),\\\label{eq:derived_equivalence_double}
&R\Gamma: D^b(\Coh^G(\Dcal^{\wedge_{0}}\boxtimes \left(\Dcal^{\wedge_{0}}\right)^{opp}))
\xrightarrow{\sim} D^b(\U^{\wedge_0}\otimes \left( \U^{\wedge_0}\right)^{opp}\operatorname{-mod}^G).
\end{align}
In (\ref{eq:derived_equivalence_0}) we set $\U^{\wedge_0,\g}:=\F[[\g^{(1)*}]]\otimes_{\F[\g^{(1)*}]}\U$.

Set
\begin{equation}\label{eq:notation_g_wedge}
\F[\g^{(1)*}]^\wedge:=\F[\g^{(1)*}]\otimes_{\F[\h^{(1)*}]^W}\F[[\h^{(1)*}]]^W.
\end{equation}
This algebra is the center of
$\U^{\wedge_0}$, this is an easy consequence of the Veldkamp theorem on the center
of $\U$.
Since $\U$ is flat over $\F[\g^{(1)*}]$ and $\St^{(1)}_\h$ is a complete intersection in
$\tilde{\g}^{(1)}\times \tilde{\g}^{(1)}$, (\ref{eq:derived_equivalence_double})
yields an equivalence
\begin{equation}\label{eq:RGamma_St_deformed}
R\Gamma: D^b(\Coh^G(\Dcal^{\wedge_{0}}\boxtimes \left(\Dcal^{\wedge_{0}}\right)^{opp}|_{\St}))
\xrightarrow{\sim} D^b(\U^{\wedge_0}\otimes_{\F[\g^{(1)*}]^\wedge}\left(\U^{\wedge_0}\right)^{opp}
\operatorname{-mod}^G),
\end{equation}
see the discussion of exact fiber products and base changes in \cite[Sections 1.3-1.5]{BM}.
Hence we have an equivalence
\begin{equation}\label{eq:der_loc_St}
R\Gamma(\Ecal_{diag}\otimes\bullet):D^b(\Coh^{G}\St_{\Ring}^{(1)})\xrightarrow{\sim}
D^b(\U^{\wedge_0}\otimes_{\F[\g^{(1)*}]^\wedge}\left(\U^{\wedge_0}\right)^{opp}
\operatorname{-mod}^G).
\end{equation}
We note that (\ref{eq:der_loc_St}) is an $\Ring$-bilinear monoidal equivalence (with respect to
the covolution vs tensor product of bimodules).

\begin{Rem}\label{Rem:specialized_equiv}
Consider the specialization $\U_0$ of $\U^{\wedge_0}$ to the closed point of $\Ring$
as well as the Steinberg variety $\St:=\St_\Ring\times_{\operatorname{Spec}(\Ring)}\operatorname{pt}$.
For the same reason as for (\ref{eq:der_loc_St}) we have
\begin{equation}\label{eq:der_loc_St_specialized}
R\Gamma(\Ecal_{diag}\otimes\bullet):D^b(\Coh^{G}\St^{(1)})\xrightarrow{\sim}
D^b(\U\otimes_{\F[\g^{(1)*}]}\U_0^{opp}
\operatorname{-mod}^G).
\end{equation}
We note that it is an equivalence of left module categories over
the equivalent monoidal categories in (\ref{eq:der_loc_St}).
\end{Rem}

\subsection{Tilting bundle and noncommutative Springer resolution}\label{SS:tilt_NC}
We will need the construction of a tilting bundle on $\tilde{\g}^{(1)}$ from \cite{BM}.
On $\tilde{\g}^{(1)}$ we have a $G^{(1)}\times \mathbb{G}_m$-equivariant
vector bundle $\Tcal_\h$ that is a
tilting generator meaning that the following two conditions hold:
\begin{itemize}
\item
$\Ext^i(\Tcal_\h,\Tcal_\h)=0$ for $i>0$,
\item and the algebra
$\Acal_\h:=\End(\Tcal_\h)$ has finite homological dimension.
\end{itemize}
This bundle was constructed in
\cite{BM}, see \cite[Theorem 1.5.1]{BM} for the statement and \cite[Section 2.5]{BM}
for the construction of $\Tcal_\h$. More precisely, \cite{BM} introduces a vector
bundle on $\tilde{\g}^{(1)}$ denoted there by $\Ecal$. The relation between the two vector bundles is
as follows: $\Tcal_\h=\Ecal^*$.

The functor $R\Gamma(\Tcal_\h\otimes\bullet)$
is an equivalence $D^b(\Coh(\tilde{\g}^{(1)}))\xrightarrow{\sim}
D^b(\Acal_\h\operatorname{-mod})$, see \cite[Section 1.5.3]{BM}.

Consider the algebras
$$\Acal_{\Ring}:=\Acal_\h\otimes_{\F[\h^{(1)*}]}\Ring,
\quad \Acal^{diag}_{\Ring}:=\Acal_{\Ring}\otimes_{\F[\g^{*(1)}]^\wedge}\Acal_{\Ring}^{opp},$$
The group $G^{(1)}$ acts on $\Acal_{\Ring}, \Acal_{\Ring}^{diag}$ by $\Ring$-algebra automorphisms. Note that,
similarly to (\ref{eq:der_loc_St}), we
have equivalences
\begin{equation}\label{eq:derived_equiv_bimod}
D^b(\Coh^{G^{(1)}}\St_{\Ring}^{(1)})\xrightarrow{\sim}
D^b(\Acal^{diag}_{\Ring}\operatorname{-mod}^{G^{(1)}}),
\quad D^b(\Coh^{G}\St_{\Ring}^{(1)})\xrightarrow{\sim}
D^b(\Acal^{diag}_{\Ring}\operatorname{-mod}^{G})
\end{equation}
given by $R\Gamma([\Tcal_{\Ring}\boxtimes \Tcal^*_{\Ring}]|_{\St}\otimes\bullet)$, where
we write $\Tcal_{\Ring}$ for $\Tcal\otimes_{\F[\h^{(1)*}]}\Ring$. Note that the categories
$\Acal^{diag}_{\Ring}\operatorname{-mod}^{G^{(1)}}, \Acal^{diag}_{\Ring}\operatorname{-mod}^{G}$ are monoidal with respect
to the functor $\bullet\otimes_{\Acal_{\Ring}}\bullet$. The equivalences
(\ref{eq:derived_equiv_bimod}) are monoidal. They are also $\Ring$-bilinear.

We will need to relate $\mathcal{T}_\h$ to the splitting bundle arising from $\mathcal{D}^{\wedge_0}$.
The following was proved in \cite{BM} (somewhat implicitly, see \cite[Lemma 4.7]{BL} for
an explicit proof).

\begin{Lem}\label{Lem:same_summands}
The bundle $\Tcal_\h^{\wedge_\g}$, the restriction of $\Tcal_\h$ to $\tilde{\g}^{(1),\wedge_\g}$, has the same indecomposable direct summands
as $\Ecal'$.
\end{Lem}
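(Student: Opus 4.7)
The plan is to identify both $\Tcal_\h^{\wedge_\g}$ and $\Ecal'$ as splitting bundles for a common split Azumaya algebra on $\tilde{\g}^{(1),\wedge_\g}$, after which the uniqueness of splitting bundles over a complete local base forces their indecomposable summands to agree.

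First I would analyze $\Ecal'$ using the construction recalled just before Remark \ref{Rem:splitting_coincidence}. The bundle $\Ecal'$ is obtained via the Morita equivalence (\ref{eq:Morita_D_modules}) from $\Ecal'_{-\rho}$, a splitting bundle for the pulled-back Azumaya algebra $\Dcal^{\wedge_{-\rho,\g}}$ on $\tilde{\g}^{(1),\wedge_\g}$. In turn, $\Ecal'_{-\rho}$ is a direct sum of copies of the pullback to $\tilde{\g}^{(1),\wedge_\g}$ of the (unique up to isomorphism) indecomposable splitting bundle of $\U^{\wedge_{-\rho}}$ on $\Spec(\F[[\g^{(1)*}]])$: the base is the spectrum of a complete local ring with algebraically closed residue field, so every Azumaya algebra over it splits and splitting bundles are determined up to direct-summand multiplicities by a single indecomposable $\Ecal_0$. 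Untwisting by $\Ocal_\B(-\rho)$, the indecomposable summands of $\Ecal'$ are all copies of the image of $\Ecal_0$ under (\ref{eq:Morita_D_modules}).

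The main step, and the main obstacle, is to show that $\Tcal_\h^{\wedge_\g}$ is also a splitting bundle for the pullback of $\Dcal^{\wedge_0}$ to $\tilde{\g}^{(1),\wedge_\g}$, equivalently, that $\Tcal_\h^{\wedge_\g}\otimes \Ocal_\B(\rho)$ splits $\Dcal^{\wedge_{-\rho,\g}}$. I would do this by comparing two derived equivalences with source $D^b(\Coh(\tilde{\g}^{(1),\wedge_\g}))$: the Bezrukavnikov--Mirkovi\'c tilting equivalence $R\Gamma(\Tcal_\h^{\wedge_\g}\otimes\bullet)$ landing in $D^b(\Acal_\h^{\wedge_\g}\operatorname{-mod})$, and the BMR-style derived localization (\ref{eq:derived_equivalence_0}) landing in $D^b(\U^{\wedge_0,\g}\operatorname{-mod})$. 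The key input from \cite{BM} is a Morita identification between $\Acal_\h^{\wedge_\g}$ and (the Morita translate by $\Ocal_\B(\rho)$ of) $\U^{\wedge_{-\rho,\g}}$ compatible with these two equivalences, which forces both $\Tcal_\h^{\wedge_\g}\otimes \Ocal_\B(\rho)$ and $\Ecal'_{-\rho}$ to be projective generators, hence splitting bundles, for the same completed Azumaya algebra. Once that identification is in place, uniqueness on the complete local base $\Spec(\F[[\g^{(1)*}]])$ makes both bundles direct sums of copies of (the pullback of) the same indecomposable $\Ecal_0$; undoing the $\Ocal_\B(\rho)$-twist transports this comparison back to $\Tcal_\h^{\wedge_\g}$ and $\Ecal'$, proving the lemma. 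A careful implementation of the required matching of endomorphism algebras is given in \cite[Lemma 4.7]{BL}, which is the reference the paper points to for a direct argument.
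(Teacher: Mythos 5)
The paper gives no argument for this lemma: it defers entirely to \cite{BM} and to \cite[Lemma 4.7]{BL}, and this is not an accident --- the statement is in substance the main theorem of \cite{BM} (agreement of the exotic t-structure with the one produced by localization plus splitting), not a formal consequence of Azumaya generalities. Your sketch fails at two concrete points. First, your description of $\Ecal'$ is wrong. The splitting bundle of $\U^{\wedge_{-\rho}}$ over the complete local base is indeed free, so $\Ecal'_{-\rho}$ is a trivial bundle whose indecomposable $\Ocal_{\tilde{\g}^{(1),\wedge_\g}}$-module summands are all copies of $\Ocal$; but that decomposition is \emph{not} a decomposition over the subsheaf $\Ocal_{\B}\subset \Dcal^{\wedge_{-\rho},\g}$ along which the Morita twist $\Ocal_{\B}(\rho)\otimes_{\Ocal_{\B}}\bullet$ is formed ($\Ocal_\B$ is not central in $\Dcal$, and its action on the splitting module is not the tautological one), so the twist does not act summand-by-summand. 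Your conclusion that all indecomposable summands of $\Ecal'$ are copies of a single object is therefore unjustified and in fact false: combined with the lemma it would make $\End(\Tcal_\h^{\wedge_\g})=\Acal_\h^{\wedge_\g}$ a matrix algebra over a commutative local ring, contradicting the fact that $\Acal$ is a noncommutative resolution with many pairwise non-isomorphic simples over the closed point.

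Second, the proposed mechanism --- both bundles split the same Azumaya algebra, hence coincide by uniqueness of splitting bundles over a complete local base --- cannot work. A splitting bundle for $\Dcal^{\wedge_0,\g}$ has rank $p^{\dim\B}$, whereas the rank of $\Tcal_\h$ is independent of $p$ (it is $2$ for $\mathfrak{sl}_2$, versus $p$), so $\Tcal_\h^{\wedge_\g}$ is not a splitting bundle, and ``projective generator'' does not imply ``splitting bundle''. Moreover, uniqueness of splitting bundles would yield $\Tcal_\h^{\wedge_\g}\cong\Ecal'$, which is strictly stronger than the lemma and false: the two bundles share their indecomposable summands but with different multiplicities. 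The mechanism that actually works is weaker: $\Ecal'$ and $\Tcal_\h^{\wedge_\g}$ are both additive generators of the category of projective objects for one and the same t-structure on $D^b(\Coh(\tilde{\g}^{(1),\wedge_\g}))$, i.e., the equivalences $R\Gamma(\Ecal'\otimes\bullet)$ and $R\Gamma(\Tcal_\h^{\wedge_\g}\otimes\bullet)$ differ by a t-exact Morita equivalence; two such generators automatically have the same set of indecomposable summands (each indecomposable projective occurs in both) with no control of multiplicities, which is exactly the assertion. Establishing that the two t-structures coincide is the hard input from \cite{BM}; it is not a routine ``matching of endomorphism algebras,'' which is why the paper points to \cite[Lemma 4.7]{BL} rather than arguing directly.
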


Set $\Acal:=\Acal_\Ring\otimes_\Ring \F$. Similarly to Remark \ref{Rem:specialized_equiv},
we have a derived equivalence

\begin{equation}\label{eq:derived_equiv_bimod_specialized}
D^b(\Coh^{G^{(1)}}\St^{(1)})\xrightarrow{\sim}
D^b(\Acal_\Ring\otimes_{\F[\g^{*(1)}]} \Acal^{opp}\operatorname{-mod}^{G^{(1)}}).
\end{equation}

\section{Preliminaries: Soergel and Harish-Chandra bimodules}
In this section we  mostly review constructions and results
from \cite{BR}. We also discuss their connection with the constructions
from the previous section.
\subsection{Soergel bimodules following \cite{BR}}\label{SS_BR_AS}
One family of categories introduced in \cite{BR} has to do with Abe's construction of the category
of Soergel bimodules. First, some notation. 
For a right $\F[\h^{(1)*}]$-module $M$ we write
\begin{equation}\label{eq:module_functors}
M_{loc}:=M\otimes_{\F[\h^{(1)*}]}\F(\h^{(1)*}), M^\wedge:=M\otimes_{\F[\h^{(1)*}]}\Ring,
\end{equation}
where, recall, $\Ring$ stands for the completion of $\F[\h^{*(1)}]$ at zero.

Following \cite[Section 2.2]{BR}, we consider the category
$\mathsf{C}'_{ext}$ whose objects are pairs consisting of
\begin{itemize}
\item[(i)] graded $\F[\h^{(1)*}]$-bimodules $B$ and
\item[(ii)] decompositions $$B_{loc}=\bigoplus_{x\in W^{ea}}B^x_{loc},$$
into the direct sum of $\F(\h^{(1)*})$-subspaces such that there are only finitely many nonzero summands and
$br=x(r)b$ for all $b\in B^x_{loc}$ and $r\in \F[\h^{(1)*}]$.
\end{itemize}
The morphisms in $\mathsf{C}'_{ext}$ are graded bimodule homomorphisms $\varphi: B\rightarrow B'$
such that $\varphi_{loc}(B^x_{loc})\subset B'^x_{loc}$ for all $x\in W^{ea}$. Inside $\mathsf{C}'_{ext}$
we consider the full subcategory $\mathsf{C}_{ext}$ consisting of all objects that are finitely
generated as bimodules and also flat as right $\F[\h^{(1)*}]$-modules. Similarly to \cite[Lemma 2.6]{Abe},
the objects in $\mathsf{C}_{ext}$ are finitely generated both as left and as right $\F[\h^{(1)*}]$-modules.
In particular, they are free as right $\F[\h^{(1)*}]$-modules. The category $\mathsf{C}'_{ext}$
has a natural monoidal structure lifting $\bullet\otimes_{\F[\h^{(1)*}]}\bullet$. The subcategory
$\mathsf{C}_{ext}$ is an additive monoidal $\F[\h^{*(1)}]$-bilinear subcategory. Note also that we have the grading shift
endo-functor, to be denoted by $\langle 1\rangle$, of $\mathsf{C}'_{ext}$ that preserves
$\mathsf{C}_{ext}$.

We will need two families of objects in $\mathsf{C}_{ext}$. First, there are standard objects,
$\Delta_x, x\in W^{ea}$. Each $\Delta_x$ is a rank one free right $\F[\h^{(1)*}]$-module
(with generator in degree $0$),
where the left action is introduced via twist with $x$ so that $\Delta_{x,loc}=\Delta_{x,loc}^x$.
For a simple affine reflection $s\in W^a$ we also have the Bott-Samelson object $B_s$
whose underlying graded bimodule is $\F[\h^{(1)*}]\otimes_{\F[\h^{(1)*}]^s}\F[\h^{(1)*}]$ with the natural
decomposition for the localized bimodule. Let $\SBim$ denote the full subcategory
of $\mathsf{C}_{ext}$ generated by $\Delta_x, x\in \Lambda/\Lambda_r$, and $B_s$, where
$s$ runs over the set of simple affine reflections  under the operations of taking tensor products, direct sums/summands
and grading shifts. It is known, essentially after \cite{Abe} (see \cite[Theorem 1.1]{Abe} for the case when $W^{ea}=W^a$), that the indecomposables
in $\SBim$ are labelled by the elements of $W^{ea}$. Namely, for $w\in W^{ea}$, we can write
a reduced expression $w=xs_1\ldots s_k$ with $x\in \Lambda/\Lambda_r$ and $s_i$ simple affine reflections.
Then the indecomposable object $B_w$ is a direct summand  in
$$\Delta_x\otimes_{\F[\h^{(1)*}]}B_{s_1}\otimes_{\F[\h^{(1)*}]}B_{s_2}\otimes_{\F[\h^{(1)*}]}\ldots
\otimes_{\F[\h^{(1)*}]}B_{s_k}$$
and all other direct summands are of the form $B_u$ with $u<w$ in the Bruhat order
(and some grading shifts).

We will work not with $\SBim,\mathsf{C}_{ext}$ and $\mathsf{C}'_{ext}$ but with their completed
(and ungraded versions). Namely, we consider the category $\mathsf{C}'^\wedge_{ext}$
consisting of $\F[[\h^{(1)*}]]$-bimodules with additional structure as in (ii) above.
Inside there is the full subcategory $\mathsf{C}^\wedge_{ext}\subset \mathsf{C}'^\wedge_{ext}$
defined similarly to $\mathsf{C}_{ext}$. Note that we have the completion functor $\bullet^\wedge:\mathsf{C}'_{ext}\rightarrow \mathsf{C}'^\wedge_{ext}$ defined by (\ref{eq:module_functors}).
It is exact and monoidal. It sends $\mathsf{C}_{ext}$ to $\mathsf{C}^\wedge_{ext}$.
We define $\SBim^\wedge$ as the full subcategory of $\mathsf{C}^\wedge_{ext}$ generated by
$B_s^\wedge$ for simple affine reflections $s$ and $\Delta_x^\wedge$ for $x\in \Lambda/\Lambda_r$.
Below we will write $B_s^{AS}$ for $B_s^\wedge$ and $\Delta_x^{AS}$ for $\Delta_x^\wedge$
(for all $x\in W^{ea}$).

The following lemma describes basic properties of the category $\SBim^{\wedge}$.

\begin{Lem}\label{Lem:SBim_completed}
The following claims are true:
\begin{enumerate}
\item For $B_1,B_2\in \mathsf{C}_{ext}$ we have
$$
\left(\bigoplus_{i\in \Z}\Hom_{\mathsf{C}_{ext}}(B_1,B_2\langle i\rangle)\right)^\wedge\xrightarrow{\sim}\Hom_{\mathsf{C}^\wedge_{ext}}(B_1^\wedge,B_2^\wedge).
$$
\item The indecomposable objects in $\SBim^\wedge$ are precisely the objects $B_x^\wedge$ for
$x\in W^{ea}$.
\end{enumerate}
\end{Lem}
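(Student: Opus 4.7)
The plan is to prove (1) by a direct flatness argument and then derive (2) from (1) using the Krull--Schmidt theorem for $\SBim$.

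For (1), I would first reduce to comparing Hom spaces of underlying bimodules cut out by the left-action and decomposition-preservation conditions. Since $B_1$ is finitely generated on the right and $B_2$ is a free, locally finite, positively supported graded $\F[\h^{(1)*}]$-module, the ungraded right-module Hom splits as $\bigoplus_i \Hom_{\text{right}}(B_1, B_2\langle i\rangle)$, and the same decomposition is inherited by the subspace of bimodule maps compatible with the $W^{ea}$-decompositions. Thus the left-hand side of the claim is identified with the ungraded bimodule Hom space $\Hom_{\mathsf{C}_{ext}}(B_1, B_2)$ completed along the augmentation ideal of $\F[\h^{(1)*}]$.

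Next, I would write $\Hom_{\mathsf{C}_{ext}}(B_1, B_2)$ as the kernel of an $\F[\h^{(1)*}]$-linear map between finitely generated $\F[\h^{(1)*)}]$-modules: freeness of $B_1$ as a right module presents $\Hom_{\text{right}}(B_1, B_2)$ as a free module of finite rank over $B_2$, left-linearity imposes finitely many linear conditions on the images of a right-module basis, and the decomposition-preservation condition imposes further linear conditions after localization. Since $\F[\h^{(1)*}] \to \Ring$ is flat, tensoring with $\Ring$ commutes with taking this kernel, and the natural map from the completed Hom to $\Hom_{\mathsf{C}^\wedge_{ext}}(B_1^\wedge, B_2^\wedge)$ is an isomorphism. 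A small verification is needed to check that the localization $B^\wedge_{loc}$ inherits the $W^{ea}$-graded decomposition from $B_{loc}$ (so that the two decomposition-preservation conditions match up); this uses that localization of $\F[\h^{(1)*}]$ at the complement of $0$ commutes with the completion at $0$ on finitely generated modules in the precise sense needed.

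For (2), I would apply (1) with $B_1 = B_2 = B_x$ to identify $\End_{\SBim^\wedge}(B_x^\wedge)$ with the completion of $\bigoplus_{i \in \Z} \End(B_x, B_x\langle i\rangle)$. Since $B_x$ is indecomposable in $\SBim$, this graded endomorphism algebra is local with residue field $\F$ (the positive-degree part and the augmentation ideal of $\F[\h^{(1)*}]$ act nilpotently modulo the Jacobson radical), so its completion is again local, forcing $B_x^\wedge$ to be indecomposable. To separate the $B_x^\wedge$ from one another I would track the multiset $\{x : B^x_{loc} \neq 0\}$, which is an invariant of the object in $\mathsf{C}^\wedge_{ext}$ and coincides for $B_x^\wedge$ with the set of $y \leq x$ in the Bruhat order, each with a known multiplicity; the top element of this set recovers $x$. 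Conversely, an arbitrary indecomposable object of $\SBim^\wedge$ arises as a summand of some completed Bott--Samelson object, and using the Krull--Schmidt lifting along the completion functor (which is available thanks to (1)) each such summand is isomorphic to some $B_x^\wedge$.

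The main obstacle will be the careful handling of the decomposition-preservation condition under completion in step (1): localization at the generic point of $\h^{(1)*}$ and completion at its closed point $0$ do not commute in general, so one must exploit the specific form of the modules in $\mathsf{C}_{ext}$ (freeness on the right and finite generation) to ensure that a bimodule map of the completions automatically respects the $W^{ea}$-decomposition as soon as the original did.
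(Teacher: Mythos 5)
Your proof of (1) is essentially the paper's argument: both identify the graded Hom sum with the ungraded bimodule Hom, use finite generation together with flatness of $\F[\h^{(1)*}]\to\Ring$ to commute Hom with completion, and then verify that the decomposition-preservation conditions on the two sides match up; for (2) the paper simply cites \cite[Lemma 6.9]{EL}, and your locality-of-the-completed-endomorphism-ring argument plus the support invariant is exactly the standard proof of that statement. The only (harmless) imprecision is that the support of $B_{x,loc}$ is merely contained in $\{y\le x\}$ with $x$ as its unique maximal element, rather than equal to that whole set, but your separation argument only uses the maximal element anyway.
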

\begin{proof}
Note that $\bigoplus_{i\in \Z}\Hom_{\mathsf{C}_{ext}}(B_1,B_2\langle i\rangle)$ embeds into
$\Hom_{\F[\h^{(1)*}]\operatorname{-bimod}}(B_1,B_2)$ with the image consisting of all $\varphi$
such that $\varphi_{loc}$ preserves the decomposition in (ii) above. Since $B_1$ is finitely
generated as an $\F[\h^{(1)*}]$-bimodule, we have
$$\Hom_{\F[\h^{(1)*}]\operatorname{-bimod}}(B_1,B_2)^\wedge\xrightarrow{\sim}
\Hom_{\F[[\h^{(1)*}]]\operatorname{-bimod}}(B_1^\wedge,B_2^\wedge).$$
We need to show that this isomorphism intertwines the sub-bimodules of all maps $\varphi$
such that $\varphi_{loc}$ intertwine the decompositions analogous to those in (ii).
The inclusion
\begin{equation}\label{eq:Hom_inclusion}\left(\bigoplus_{i\in \Z}\Hom_{\mathsf{C}_{ext}}(B_1,B_2\langle i\rangle)\right)^\wedge\subset\Hom_{\mathsf{C}^\wedge_{ext}}(B_1^\wedge,B_2^\wedge).
\end{equation}
follows from the observation that the functor $\bullet\otimes_{\F(\h^{(1)*})}\F((\h^{(1)*}))$
preserves the decompositions like in (ii). On the hand, the preimage of
$$\Hom_{\mathsf{C}^\wedge_{ext}}(B_1^\wedge,B_2^\wedge)\subset
\Hom_{\F[[\h^{(1)*}]]\operatorname{-bimod}}(B_1^\wedge,B_2^\wedge)$$
inside  $\Hom_{\F[\h^{(1)*}]\operatorname{-bimod}}(B_1,B_2)$ consists of maps
that preserve the decompositions in (ii). It follows that (\ref{eq:Hom_inclusion}) is
an equality.

(2) is proved exactly as \cite[Lemma 6.9]{EL}.
\end{proof}

To relate $\SBim$ (or $\SBim^\wedge$) to a category of Harish-Chandra bimodules (that will be explained in
the next section) we need an intermediate
geometric category considered in \cite{BR}. Define the scheme
\begin{equation}\label{eq:Y_scheme}
Y:=\h^{(1)*}\times_{\h^{(1)*}/W}\h^{(1)*}.
\end{equation}

There is a certain affine group  scheme $\mathfrak{J}$ over
$Y$ introduced in \cite[Section 2.3]{BR}. Namely, let $\g^{(1)*,reg}$ denote the locus of
regular (not necessarily semisimple) elements in $\g^{(1)*}$. Inside $\g^{(1)*,reg}$
we have the Kostant-Slodowy slice $S^{(1)}$. The quotient morphism $\g^{(1)*}
\rightarrow \g^{(1)*}/\!/ G^{(1)}\cong \h^{(1)*}/W$ restricts to an isomorphism $S^{(1)}\xrightarrow{\sim}\h^{(1)*}/W$.
Over $\g^{(1)*,reg}$ we have the universal centralizer group scheme,
to be denoted here by $\mathfrak{C}^{(1)}$. Over $s\in S^{(1)}$, the fiber of
$\mathfrak{C}^{(1)}$ is the stabilizer of $s$ in $G^{(1)}$. We can view $\mathfrak{C}^{(1)}$
as a group scheme over  $S^{(1)}\xrightarrow{\sim}\h^{(1)*}/W$. Then we set
$$\mathfrak{J}=\h^{(1)*}\times_{\h^{(1)*}/W}\mathfrak{C}^{(1)}\times_{\h^{(1)*/W}}\h^{(1)*}.$$

We note that, by the construction of $\mathfrak{J}$, the multiplicative group
$\F^\times$ acts on $\mathfrak{J}$ by automorphisms compatibly with the dilation
action on $\h^{(1)*}\times_{\h^{(1)*}/W}\h^{(1)*}$. So it makes sense to
consider the category $\operatorname{Rep}_{fl}^{\F^\times}(\mathfrak{J})$ of
$\F^\times\ltimes\mathfrak{J}$-equivariant coherent sheaves on $Y$ that are
flat over the second copy of $\h^{(1)*}$. As explained in \cite[Section 2.3]{BR},
$\operatorname{Rep}_{fl}^{\F^\times}(\mathfrak{J})$ is a monoidal category.

We will need the completed version of this category. Consider the scheme
$Y^\wedge:=Y\times_{\h^{(1)*}}\h^{*\wedge}=\h^{*\wedge}\times_{\h^{*\wedge}/W}\h^{*\wedge}$
(where we write $\h^{*\wedge}$ for $\operatorname{Spec}(\Ring)\cong \operatorname{Spec}(\F[[\h^{(1)*}]])$)
and the pullback $\mathfrak{J}^\wedge$ of $\mathfrak{J}$ to $Y^\wedge$.
So we can consider the category $\operatorname{Rep}_{fl}(\mathfrak{J}^\wedge)$.
It is monoidal similarly to \cite[Section 2.3]{BR}. We have the completion functor $\bullet^\wedge:
\operatorname{Rep}_{fl}^{\F^\times}(\mathfrak{J})\rightarrow
\operatorname{Rep}_{fl}(\mathfrak{J}^\wedge)$ defined as above.

\begin{Lem}\label{Lem:full_embedding_geometric}
We have a monoidal $\Ring$-bilinear full embedding  $\SBim^\wedge\hookrightarrow
\operatorname{Rep}_{fl}(\mathfrak{J}^\wedge)$.
\end{Lem}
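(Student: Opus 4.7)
The plan is to reduce this to the non-completed analogue established by Bezrukavnikov--Riche. In \cite[Section 2.3]{BR} a monoidal $\F[\h^{(1)*}]$-bilinear full embedding $\iota:\SBim\hookrightarrow \operatorname{Rep}^{\F^\times}_{fl}(\mathfrak{J})$ is constructed: the standard object $\Delta_x$ goes to a suitable skyscraper on the graph of $x\in W^{ea}$ in $Y$, and the Bott--Samelson object $B_s$ goes to the structure sheaf of the relevant correspondence, both equipped with their natural $\mathfrak{J}$-equivariant structures. My strategy is to apply the completion functor $\bullet^\wedge$ (base change by $\Ring$ over $\F[\h^{(1)*}]$ along the right tensor factor) to both source and target of $\iota$ and verify that the resulting functor $\SBim^\wedge\to \operatorname{Rep}_{fl}(\mathfrak{J}^\wedge)$ is well-defined, monoidal, $\Ring$-bilinear, and fully faithful.

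First I would check well-definedness. On the source, $\SBim^\wedge$ is by definition generated under the monoidal structure by the completions $\Delta_x^\wedge,B_s^\wedge$ of the generators of $\SBim$, so completion maps generators to generators. On the target, base change along $\F[\h^{(1)*}]\to\Ring$ (along either the first or the second factor of $Y$) sends a $\mathfrak{J}$-equivariant coherent sheaf, flat over the second copy of $\h^{(1)*}$, to a $\mathfrak{J}^\wedge$-equivariant coherent sheaf on $Y^\wedge$, still flat over the second factor, because flatness is preserved by base change and the $\mathfrak{J}$-action map pulls back to a $\mathfrak{J}^\wedge$-action map. The functor is then extended from generators by monoidality. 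Monoidality and $\Ring$-bilinearity of the completed functor are formal: completion is monoidal (since $\bullet\otimes_{\F[\h^{(1)*}]}\Ring$ is a symmetric monoidal localisation/base change on bimodules flat on the right), $\iota$ is monoidal by construction, so their composition is monoidal, and the $\Ring$-action is induced from the right factor.

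The heart of the proof is full faithfulness. By Lemma~\ref{Lem:SBim_completed}(1) one has
\[
\Hom_{\SBim^\wedge}(B_1^\wedge,B_2^\wedge)\;\cong\;\Bigl(\bigoplus_{i\in\Z}\Hom_{\SBim}(B_1,B_2\langle i\rangle)\Bigr)^\wedge.
\]
I would prove the analogous statement on the geometric side: for $\mathcal{F}_1,\mathcal{F}_2\in \operatorname{Rep}^{\F^\times}_{fl}(\mathfrak{J})$ with $\mathcal{F}_j$ coherent on $Y$, one has a natural isomorphism
\[
\Hom_{\operatorname{Rep}_{fl}(\mathfrak{J}^\wedge)}(\mathcal{F}_1^\wedge,\mathcal{F}_2^\wedge)\;\cong\;\Bigl(\bigoplus_{i\in\Z}\Hom_{\operatorname{Rep}^{\F^\times}_{fl}(\mathfrak{J})}(\mathcal{F}_1,\mathcal{F}_2\langle i\rangle)\Bigr)^\wedge.
\]
This follows by the same mechanism as in Lemma~\ref{Lem:SBim_completed}(1): without $\mathfrak{J}$-equivariance it is the standard compatibility of $\Hom$ with flat base change applied to a coherent sheaf over a Noetherian scheme; the $\mathfrak{J}$-equivariance condition is a $\mathfrak{J}_{loc}$-equivariance condition after localising $\bullet\otimes_{\F[\h^{(1)*}]}\F(\h^{(1)*})$, and localisation commutes with completion, so the subspaces of $\mathfrak{J}$-intertwiners on the left and $\mathfrak{J}^\wedge$-intertwiners on the right correspond. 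Since $\iota$ is already fully faithful on the level of graded bimodule Hom, taking $\Ring$-completions of both sides and invoking the two displayed isomorphisms yields the desired bijection of Hom spaces in $\SBim^\wedge$ and in $\operatorname{Rep}_{fl}(\mathfrak{J}^\wedge)$.

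The main obstacle is the geometric-side Hom comparison: one must ensure that the $\mathfrak{J}^\wedge$-equivariance obtained from completing a $\mathfrak{J}$-equivariant sheaf captures \emph{all} $\mathfrak{J}^\wedge$-equivariant maps between completed sheaves, not merely the completions of $\mathfrak{J}$-equivariant maps. This is where the $\F^\times$-grading on the source side is crucial, since it is what allows one to reconstruct $\mathfrak{J}$-equivariant morphisms from $\mathfrak{J}^\wedge$-equivariant ones via the degree decomposition, in the same spirit as the proof of Lemma~\ref{Lem:SBim_completed}(1). Once this comparison is in place, the remaining verifications (monoidal structure, $\Ring$-bilinearity, matching of generators) are routine.
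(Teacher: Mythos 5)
Your overall architecture is close to the paper's: both arguments lean on the uncompleted results of \cite{BR} (the full embedding of the graded Soergel category into $\operatorname{Rep}^{\F^\times}_{fl}(\mathfrak{J})$, resp.\ \cite[Proposition 2.7, Lemma 2.9]{BR}) and reduce everything to the comparison of completed and uncompleted Hom spaces on the $\mathfrak{J}$-side, which is exactly the isomorphism (\ref{eq:hom_equality}) that the paper proves. (The paper packages this slightly differently: it completes the embedding $\operatorname{Rep}^{\F^\times}_{fl}(\mathfrak{J})\hookrightarrow \mathsf{C}_{ext}$ and then checks, via \cite[Lemma 2.9]{BR}, that the essential image of the completed embedding contains $\SBim^\wedge$, rather than completing $\SBim\hookrightarrow \operatorname{Rep}^{\F^\times}_{fl}(\mathfrak{J})$ directly; this difference is inessential, though your "extend from generators by monoidality" step silently needs that $\operatorname{Rep}_{fl}(\mathfrak{J}^\wedge)$ is Karoubian, or part (2) of Lemma~\ref{Lem:SBim_completed}, to handle direct summands.)

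The genuine gap is in your treatment of the decisive step, the geometric Hom comparison. You assert that $\mathfrak{J}$-equivariance (and $\mathfrak{J}^\wedge$-equivariance) of a morphism of the underlying sheaves ``is a condition after localising,'' that ``localisation commutes with completion,'' and that the $\F^\times$-grading is what lets one reconstruct $\mathfrak{J}$-equivariant morphisms from $\mathfrak{J}^\wedge$-equivariant ones. None of this constitutes a proof of the hard inclusion, namely that every $\mathfrak{J}^\wedge$-equivariant map $\mathcal{F}_1^\wedge\rightarrow\mathcal{F}_2^\wedge$ lies in $\Hom_{\mathfrak{J}}(\mathcal{F}_1,\mathcal{F}_2)\otimes_{\F[\h^{(1)*}]}\Ring$. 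The claim that equivariance can be detected after localisation is unproved (on the uncompleted side it amounts to the fullness in \cite[Proposition 2.7]{BR} and needs flatness of $\mathfrak{J}$ over $Y$ together with flatness of $\mathcal{F}_2$ over the second copy of $\h^{(1)*}$; on the completed side it is essentially the fullness you are trying to establish, so it cannot simply be quoted), and even granting it, matching the two ``generically equivariant'' subspaces under $\otimes_{\F[\h^{(1)*}]}\Ring$ is exactly the nontrivial point, not a formal consequence of ``localisation commutes with completion.'' The $\F^\times$-grading plays no role here beyond identifying the completed Hom with the completion of the sum of graded Homs. The paper's proof of (\ref{eq:hom_equality}) avoids all of this: it writes the equivariance condition as the comodule equation $\alpha(\varphi)=\varphi\otimes\epsilon$, observes that (by finite generation) this is a finite system of linear equations on $\varphi$ with coefficients in $\F[Y]$, notes that the same equations cut out $\Hom_{\mathfrak{J}^\wedge}(\mathcal{F}_1^\wedge,\mathcal{F}_2^\wedge)$ inside $\Hom_{\Coh(Y^\wedge)}(\mathcal{F}_1^\wedge,\mathcal{F}_2^\wedge)$, and uses flatness of $\Ring$ over $\F[\h^{(1)*}]$ so that solution spaces of such systems commute with the base change. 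Your outline could be repaired along these lines (equivariance is a closed, not merely generic, condition, and flat base change of the defining kernel is the clean mechanism), but as written the key step is asserted rather than proved.
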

\begin{proof}
The proof is very similar to that of \cite[Theorem 2.10]{BR}. Namely,
\cite[Proposition 2.7]{BR}, we have a fully faithful monoidal embedding
\begin{equation}\label{eq:embedding2}
\operatorname{Rep}_{fl}^{\F^\times}(\mathfrak{J})\hookrightarrow
\mathsf{C}_{ext}.
\end{equation}
We claim that it gives rise to a full monoidal embedding
\begin{equation}\label{eq:embedding3}
\operatorname{Rep}_{fl}(\mathfrak{J}^\wedge)\hookrightarrow
\mathsf{C}^\wedge_{ext}.
\end{equation}
The functor is constructed similarly to the proof \cite[Proposition 2.7]{BR},
it is monoidal. To show it is full we need to relate Hom's in the categories $\operatorname{Rep}_{fl}^{\F^\times}(\mathfrak{J}), \operatorname{Rep}_{fl}(\mathfrak{J}^\wedge)$
and also in $\operatorname{Rep}_{fl}(\mathfrak{J})$.
They will be denoted by $\Hom_{\F^\times,\mathfrak{J}}, \Hom_{\mathfrak{J}^\wedge},
\Hom_{\mathfrak{J}}$. Namely, thanks to (1) of Lemma \ref{Lem:SBim_completed}
and \cite[Proposition 2.7]{BR}, it is enough to
prove that
\begin{equation}\label{eq:hom_equality}
\left(\bigoplus_{i\in \Z}\Hom_{\F^\times,\mathfrak{J}}(\mathcal{F}_1,\mathcal{F}_2\langle i\rangle)\right)^\wedge\xrightarrow{\sim}\Hom_{\mathfrak{J}^\wedge}(\mathcal{F}_1^\wedge,\mathcal{F}_2^\wedge)
\end{equation}
for all $\mathcal{F}_1,\mathcal{F}_2\in \operatorname{Rep}_{fl}^{\F^\times}(\mathfrak{J})$.
Note that the direct sum in the  left hand side of (\ref{eq:hom_equality}) is nothing else but $\Hom_{\mathfrak{J}}(\mathcal{F}_1,\mathcal{F}_2)$. As in the proof of (1) of Lemma
\ref{Lem:SBim_completed}, we have
$$\Hom_{\Coh(Y)}(\mathcal{F}_1,\mathcal{F}_2)^\wedge\xrightarrow{\sim}
\Hom_{\Coh(Y^\wedge)}(\mathcal{F}_1^\wedge,\mathcal{F}_2^\wedge).$$
We need to show that this isomorphism intertwines $\Hom_{\mathfrak{J}}(\mathcal{F}_1,\mathcal{F}_2)^\wedge$ with $ \Hom_{\mathfrak{J}^\wedge}(\mathcal{F}_1^\wedge,\mathcal{F}_2^\wedge)$.

Note that $\Hom_{\mathfrak{J}}(\mathcal{F}_1,\mathcal{F}_2)$ is the $\F[Y]$-submodule
of solutions to a finite system of linear equations with coefficients in $\F[Y]$.
Indeed, $\Hom_{\Coh(Y)}(\mathcal{F}_1,\mathcal{F}_2)$ is a $\mathfrak{J}$-module,
equivalently, an $\F[\mathfrak{J}]$-comodule. If
$\epsilon:\F[Y]\rightarrow \F[\mathfrak{J}]$ denotes the  unit map
and $$\alpha:\Hom_{\Coh(Y)}(\mathcal{F}_1,\mathcal{F}_2)\rightarrow
\Hom_{\Coh(Y)}(\mathcal{F}_1,\mathcal{F}_2)\otimes_{\F[Y]}\F[\mathfrak{J}]$$
is the co-action map, then
$$
\Hom_{\mathfrak{J}}(\mathcal{F}_1,\mathcal{F}_2)=\{\varphi\in \Hom_{\Coh(Y)}(\mathcal{F}_1,\mathcal{F}_2)| \alpha(\varphi)=\varphi\otimes \epsilon\}.$$
The equation in the right hand side translates to a finite system of linear equations on $\varphi$
with coefficients in $\F[Y]$, which establishes the claim in the beginning of the paragraph.
Now note that
$$\Hom_{\mathfrak{J}^\wedge}(\mathcal{F}_1^\wedge,\mathcal{F}_2^\wedge)\subset
\Hom_{\Coh(Y^\wedge)}(\mathcal{F}_1^\wedge,\mathcal{F}_2^\wedge)$$
is specified by the same equations. This finishes the proof of (\ref{eq:hom_equality})
and hence the construction of (\ref{eq:embedding3}).

To establish the full embedding $\SBim^\wedge\hookrightarrow
\operatorname{Rep}_{fl}(\mathfrak{J}^\wedge)$ it suffices
show that the essential image of (\ref{eq:embedding3})
contains $\SBim^\wedge$. This follows from
\cite[Lemma 2.9]{BR}.
\end{proof}

\begin{Rem}\label{Rem:full_embedding_construction}
We will need a construction of (\ref{eq:embedding3}) following the proof of
\cite[Proposition 2.7]{BR}. It is essentially a (partially) forgetful functor. An object $\mathcal{F}$ in
$\operatorname{Rep}_{fl}(\mathfrak{J}^\wedge)$ is, in particular, an $\Ring$-bimodule
that is flat over the second copy of $\Ring$. The localization of $\mathcal{F}$ to the regular
locus in $\h^{(1)*\wedge}/W$ carries an action of $T$. The corresponding locus in $Y$ splits
into the disjoint union of components indexed by $W$. These two structures give the decomposition
of $\mathcal{F}_{loc}$ as in (ii) in the beginning of the section.
\end{Rem}

\subsection{Harish-Chandra bimodules following \cite{BR}}\label{SS_BR_HC}
Here we recall that category of modular HC bimodules following \cite{BR} and explain a version of
the main result of \cite{BR}.

We start by recalling basics following \cite[Section 3]{BR}.
Recall that $\U^{\wedge_0}$ stands for the partial completion of $\U$ at HC central character $0$:
$$\U^{\wedge_0}=\U\otimes_{\F[\h^*/(W,\cdot)]}\F[\h^*/(W,\cdot)]^{\wedge_0}.$$
This is an $\Ring$-algebra with a $G$-action. The center of $\U^{\wedge_0}$ is $\F[\g^{(1)*}]^\wedge$,
see Section \ref{SS_der_loc}.

Consider the category $\U^{\wedge_0,opp}\operatorname{-mod}^G$ of (weakly) $G$-equivariant
right $\U^{\wedge_0}$-modules. Such a module carries a natural left $\U$-action commuting
with the right $\U^{\wedge_0}$-action that is uniquely recovered from the condition that the
adjoint $\g$-action coincides with the differential of the $G$-action (this condition can be expressed
by saying that we get strongly $G$-equivariant $\U$-bimodules). The objects in
$\U^{\wedge_0,opp}\operatorname{-mod}^G$ will be called {\it Harish-Chandra bimodules}.

For $\mu\in \Lambda$, let $\HC^G(\U^{\wedge_\mu}\operatorname{-}\U^{\wedge_0})$ stand for the full subcategory $\U^{\wedge_0,opp}\operatorname{-mod}^G$ of all objects where the left action of
$\U$ factors through $\U^{\wedge_\mu}$.
We have the decomposition,
see e.g. \cite[(3.13)]{BR}
\begin{equation}\label{eq:category_decomposition}
\U^{\wedge_0,opp}\operatorname{-mod}^G=\bigoplus_{\mu} \HC^G(\U^{\wedge_\mu}\operatorname{-}\U^{\wedge_0}),
\end{equation}
where the summation is taken over all orbits of $W$ in the $\mathbb{F}_p$-points in $\h^*$. We will be primarily interested in the category
$\HC^G(\U^{\wedge_0}):=\HC^G(\U^{\wedge_0}\operatorname{-}\U^{\wedge_0})$. It is $\Ring$-bilinear and monoidal with respect to $\bullet\otimes_{\U^{\wedge_0}}\bullet$. We write $\operatorname{pr}_0$ for the projection
functor from $\U^{\wedge_0,opp}\operatorname{-mod}^G$ to $\HC^G(\U^{\wedge_0})$.

Here is a special class of bimodules considered in \cite{BR}. We say that an object in
$\HC^G(\U^{\wedge_0})$ is {\it diagonally induced} if it is a direct summand in
$V\otimes \U^{\wedge_0}$ for some finite dimensional representation $V$ of $G$.
This is a monoidal subcategory of $\HC^G(\U^{\wedge_0})$ to be denoted by
$\HC^G_{diag}(\U^{\wedge_0})$.

It was essentially shown in \cite{BR} that there is a full embedding of
$\SBim^\wedge$ into $\HC^G_{diag}(\U^{\wedge_0})$. One can describe the images of
$\Delta_x^{AS}, B_s^{AS}\in \SBim$ in   $\HC^G_{diag}(\U^{\wedge_0})$ for all $x\in \Lambda/\Lambda_r$
and all simple affine reflections $s$. First, we have the reflection
bimodules $B_s^{HC}\in \HC^G(\U^{\wedge_0})$ for all simple affine reflections $s$, in the notation of \cite{BR}
these are the objects
$$\mathsf{P}^{0,\mu_s}\widehat{\otimes}_{\mathcal{U}\g}\mathsf{P}^{\mu_s,0}.$$
Tensoring with $B_s^{HC}$ on the left gives the classical reflection functor
corresponding to $s$, it will be denoted by $\Theta_s$.
For $x\in \Lambda/\Lambda_r,$ we consider the standard objects $\Delta_x^{HC}$
for $x\in \Lambda/\Lambda_r$, the translation bimodules from $0$ to $x^{-1}\cdot 0$
(where we write $\cdot$ to indicate the $p$-scaled action of $W^{ea}$ on $\Lambda$,
note that the condition that $x\in \Lambda/\Lambda_r$ is equivalent to $x^{-1}\cdot 0$
and $0$ being in the same alcove). The full embedding $\SBim^\wedge\hookrightarrow \HC^G_{diag}(\U^{\wedge_0})$ essentially constructed in \cite{BR} (see Section 6 there
for the final construction) sends $B_s^{AS}$ to $B_s^{HC}$ and $\Delta_x^{AS}$ to
$\Delta_x^{HC}$.

Let us recall the construction of the full embedding. Recall the schemes $Y,Y^\wedge$
from Section \ref{SS_BR_AS} as well as the group scheme $\mathfrak{J}$ on $Y$ and
its restriction $\mathfrak{J}^\wedge$ to $Y^{\wedge}$. Consider another group scheme,
$\mathfrak{I}$, on $Y$ defined analogously to $\mathfrak{J}$ but for the action of
$G$ on $\g^{(1)*}$ instead of the action of $G^{(1)}$ (this group scheme is denoted by
$\mathbb{I}$ in \cite{BR}). We have a natural epimorphism $\mathfrak{I}\rightarrow
\mathfrak{J}$ whose kernel is the constant group scheme on $Y$ with fiber $G_1$, the first Frobenius kernel. Let $\mathfrak{I}^\wedge$ denote the pullback of $\mathfrak{I}$ to $Y^{\wedge}$.
We can consider the category $\operatorname{Rep}(\mathfrak{I}^\wedge)$ defined similarly
to $\operatorname{Rep}(\mathfrak{J}^\wedge)$. We have a full embedding
$\operatorname{Rep}(\mathfrak{J}^\wedge)\hookrightarrow \operatorname{Rep}(\mathfrak{I}^\wedge)$,
the pullback via $\mathfrak{I}^\wedge\twoheadrightarrow \mathfrak{J}^\wedge$.

We will also need to enlarge $\HC^G(\U^{\wedge_0})$.
By definition,
the enlarged category we need
consists of all $G$-equivariant $\U^{\wedge_0}$-bimodules where the left and the right
actions of the $p$-center $\F[\g^{(1)*}]^\wedge$ coincide. Denote this category
by $\overline{\HC}^G(\U^{\wedge_0})$. Since the left and right actions of the $p$-center
on every HC bimodule coincide,
we have an $\Ring$-bilinear  full monoidal inclusion $
\HC^G(\U^{\wedge_0})\hookrightarrow \overline{\HC}^G(\U^{\wedge_0})$,
see \cite[(3.6)]{BR}.

The construction in \cite{BR} starts with producing a functor
\begin{equation}\label{eq:functor_HC_slice}
\overline{\HC}^G(\U^{\wedge_0})\rightarrow \operatorname{Rep}(\mathfrak{I}^\wedge)
\end{equation}
that restricts to
a fully faithful embedding $\HC^G_{diag}(\U^{\wedge_0})\hookrightarrow \operatorname{Rep}_{fl}(\mathfrak{I}^\wedge)$. The construction is as follows. Consider the
locus
$$\St_{\Ring}^{(1),reg}:=\g^{(1)*,reg}\times_{\g^{(1)*}}\St^{(1)}_\Ring\subset \St_\Ring^{(1)}$$
Note that the restriction of the projective morphism
$$\St_{\Ring}^{(1)}\rightarrow
\operatorname{Spec}(\Ring)\times_{\h^{(1)*}/W}\g^{(1)*}\times_{\h^{(1)*}/W} \operatorname{Spec}(\Ring)$$
to $\St_{\Ring}^{(1),reg}$ is an open embedding. We can view $\U^{\wedge_0}\otimes_{\F[\g^{(1)*}]^\wedge}\U^{\wedge_0,opp}$ as a sheaf of algebras on
$\operatorname{Spec}(\Ring)\times_{\h^{(1)*}/W}\g^{(1)*}\times_{\h^{(1)*}/W} \operatorname{Spec}(\Ring)$.
Its restriction to $\St_{\Ring}^{(1),reg}$, to be denoted by
$\left(\U^{\wedge_0}\otimes_{\F[\g^{(1)*}]^\wedge}\U^{\wedge_0,opp}\right)^{reg}$, coincides with the restriction of $\Dcal^{\wedge_{0}}\boxtimes \left(\Dcal^{\wedge_{0}}\right)^{opp}|_{\St}$. In particular,
$\left(\U^{\wedge_0}\otimes_{\F[\g^{(1)*}]^\wedge}\U^{\wedge_0,opp}\right)^{reg}$ is a $G$-equivariant
Azumaya algebra with a $G$-equivariant splitting bundle $\mathcal{E}_{diag}^{reg}$, the restriction of the splitting bundle $\Ecal_{diag}$
introduced in Section \ref{SS_der_loc} to $\St_\Ring^{(1),reg}$.

The functor (\ref{eq:functor_HC_slice}) is constructed as follows. Start with an object in
$\overline{\HC}^G(\U^{\wedge_0})$ and restrict it to $\St_\Ring^{(1),reg}$. Applying the
equivalence coming from the splitting bundle $\Ecal_{diag}^{reg}$, we get a $G$-equivariant
coherent sheaf on  $\St_\Ring^{(1),reg}$. Note that $Y^\wedge$ embeds into
$\St_\Ring^{(1),reg}$, this embedding is induced by the embedding of $S$ into $\g^{(1)*}$.
So we can restrict a $G$-equivariant coherent sheaf from $\St_\Ring^{(1),reg}$ to $Y^\wedge$
getting an $\mathfrak{I}^\wedge$-equivariant coherent sheaf. For (\ref{eq:functor_HC_slice})
we take the resulting composition:
\begin{equation}\label{eq:functor_composition}
\overline{\HC}^G(\U^{\wedge_0})\rightarrow
\Coh^G(\left(\U^{\wedge_0}\otimes_{\F[\g^{(1)*}]^\wedge}\U^{\wedge_0,opp}\right)^{reg})
\xrightarrow{\sim} \Coh^G(\St_\Ring^{(1),reg})\rightarrow \operatorname{Rep}(\mathfrak{I}^\wedge).
\end{equation}
In fact, later we will see that the image of $\HC^G(\U^{\wedge_0})$ in
$\Coh^G(\St_\Ring^{(1),reg})$ lies in $\Coh^{G^{(1)}}(\St_\Ring^{(1),reg})$.

We note that thus constructed functor (\ref{eq:functor_HC_slice}) is the same as
the composition of \cite[(3.18)]{BR} and the inverse of the equivalence $\mathcal{L}_{0,0}$
in \cite[Corollary 4.8]{BR}. Indeed, to get from $\Coh^G(\left(\U^{\wedge_0}\otimes_{\F[\g^{(1)*}]^\wedge}\U^{\wedge_0,opp}\right)^{reg})$
to $\operatorname{Rep}(\mathfrak{I}^\wedge)$ we first apply the equivalence coming from the
splitting bundle, and then restrict to $Y^\wedge$. Bezrukavnikov and Riche first restrict to
$Y^\wedge$ and then apply the equivalence coming from the splitting bundle constructed in
\cite[Theorem 4.3]{BR}. To see that our construction agrees with that from \cite{BR}
it remains to prove the following lemma.

\begin{Lem}\label{Lem:splitting_coincidence}
The splitting bundle from \cite[Theorem 4.3]{BR} coincides with the restriction of
$\mathcal{E}_{diag}$ to $Y^\wedge$.
\end{Lem}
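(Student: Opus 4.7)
The plan is to exhibit both splitting bundles as the output of the same two-step construction, performed through a Cartesian square relating $Y^\wedge$ to $\St_{\Ring}^{(1),reg}$. Both sides are built by first producing a splitting bundle for the $(-\rho)$-version of the Azumaya algebra and then applying the Morita twist (\ref{eq:Morita_diagonal}); the lemma is essentially a base-change identity for these two steps.

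First, I unpack $\mathcal{E}_{diag}$ as given in Definition \ref{defi:Ecal}: one pulls back the tautological splitting of the Azumaya algebra $\U^{\wedge_{-\rho}}$ from $\g^{*(1)}_{\Ring}$ to $\St^{(1)}_{\Ring}$ along the resolution morphism, obtaining $\mathcal{E}^{diag}_{-\rho}$, and then applies the Morita twist (\ref{eq:Morita_diagonal}). Since this twist is tensor product with the line bundle $\Ocal_{\B\times\B}(\rho,-\rho)$ defined on all of $\B\times\B$, it commutes with restriction to $Y^\wedge\subset\St^{(1),reg}_{\Ring}$. Hence $\mathcal{E}_{diag}|_{Y^\wedge}$ is the Morita twist of the pullback of $\U^{\wedge_{-\rho}}$ along the composition $Y^\wedge\hookrightarrow\St^{(1),reg}_{\Ring}\to\g^{*(1)}_{\Ring}$.

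Next, I recall BR's construction in \cite[Theorem 4.3]{BR}, which is the parallel procedure carried out directly on $Y^\wedge$: one uses the Kostant--Slodowy section $\h^{*(1)}/W \xrightarrow{\sim} S^{(1)} \hookrightarrow \g^{*(1)}$ (completed appropriately) to pull back the tautological splitting of $\U^{\wedge_{-\rho}}$ to $Y^\wedge$, and then applies the same Morita twist. The comparison thereby reduces to identifying the two pullbacks of $\U^{\wedge_{-\rho}}$ to $Y^\wedge$—one through $\St^{(1),reg}_{\Ring}$ and the other through $S^{(1)}_{\Ring}$. These coincide because the diagram
$$
\begin{array}{ccc}
Y^\wedge & \hookrightarrow & \St^{(1),reg}_{\Ring}\\
\downarrow & & \downarrow\\
S^{(1)}_{\Ring} & \hookrightarrow & \g^{*(1)}_{\Ring}
\end{array}
$$
is Cartesian: this is a completed Frobenius-twisted version of the standard Kostant identification $\tilde{\g}^{reg}\times_{\g^*}S \cong S\times_{\h^*/W}\h^*$, iterated once more to give $\St^{reg}\times_{\g^*}S\cong Y$. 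By flat base change, the two $G$-equivariant pullbacks of $\U^{\wedge_{-\rho}}$ to $Y^\wedge$ are canonically isomorphic, and the subsequent common Morita twist finishes the identification.

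The main obstacle will be controlling the equivariant structures: splitting bundles of a split Azumaya algebra over a local base are unique only up to twist by an invertible sheaf, so one must verify that no such twist is introduced when passing between the two orders of operations. This is handled by the observation that both bundles descend from the tautological identification of $\U^{\wedge_{-\rho}}$ with its own regular module, which carries canonical compatible $G$- and $\Str$-module structures; the Cartesian property of the square above then identifies the two pullbacks equivariantly on the nose rather than merely up to a line bundle ambiguity.
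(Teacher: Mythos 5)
There is a genuine gap: your proof rests on a mischaracterization of what the splitting bundle of \cite[Theorem 4.3]{BR} actually \emph{is}. You describe it as ``the parallel procedure carried out directly on $Y^\wedge$'' --- pull back the tautological splitting of $\U^{\wedge_{-\rho}}$ along the Kostant--Slodowy section and apply the Morita twist. But Bezrukavnikov--Riche do not define their splitting bundle that way: it is the restriction to $Y^\wedge$ of the translation bimodule $\mathsf{P}^{0,-\rho}\widehat{\otimes}_{\U}\mathsf{P}^{-\rho,0}$, where $\mathsf{P}^{0,-\rho}$ is the projection of $\U^{\wedge_0}\otimes L(\rho)$ to the block $\HC^G(\U^{\wedge_0}\operatorname{-}\U^{\wedge_{-\rho}})$. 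The entire content of the lemma is to convert this representation-theoretic object into the geometric one, and that step is absent from your argument. Concretely, one must show
$$\mathsf{P}^{0,-\rho}\cong \Gamma\bigl(\mathcal{O}_{\mathcal{B}}(\rho)\otimes_{\mathcal{O}_{\mathcal{B}}}\mathcal{D}^{\wedge_{-\rho}}\bigr)$$
(and its mirror for $\mathsf{P}^{-\rho,0}$). The paper does this by localizing: $\U^{\wedge_0}\otimes L(\rho)=\Gamma(\mathcal{D}^{\wedge_0}\otimes L(\rho))$, and $\mathcal{D}^{\wedge_0}\otimes L(\rho)$ is filtered by subquotients $\mathcal{D}^{\wedge_0}\otimes\mathcal{O}(\nu)$ over the weights $\nu$ of $L(\rho)$, each of which is a right $\mathcal{D}^{\wedge_{-\nu}}$-module. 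Extracting the $-\rho$-block then requires the weight-theoretic fact that $\nu+\rho\in p\Lambda$ for a weight $\nu$ of $L(\rho)$ forces $\nu=-\rho$ (via $0\leqslant\langle w(\nu+\rho),\alpha^\vee\rangle\leqslant 2$), plus the multiplicity-one statement for the weight $-\rho$. None of this appears in your proposal, so the core of the lemma is unproved.

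What your argument does correctly address is the \emph{easy} half of the comparison: once one knows that $\mathsf{P}^{0,-\rho}\widehat{\otimes}_\U\mathsf{P}^{-\rho,0}$ agrees with $\mathcal{O}_{\B\times\B}(\rho,-\rho)\otimes(\text{pullback of }\U^{\wedge_{-\rho}})$ as a sheaf over the regular locus, the compatibility of restriction to $Y^\wedge$ with the Morita twist and with the two pullbacks of $\U^{\wedge_{-\rho}}$ is indeed routine (the paper dispatches it in one sentence, noting that $\U^{\wedge_0}$ and $\Dcal^{\wedge_0}$ coincide over $\g^{*(1),reg}_\h$). But that step presupposes exactly the identification you have not established. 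To repair the proof you would need either to carry out the weight/filtration analysis above, or to cite a statement in \cite{BR} asserting that their splitting bundle admits the geometric description you assumed --- and no such statement is invoked in the paper.
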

\begin{proof}
We write $L(\rho)$ for the irreducible $G$-module with highest weight $\rho$.
In the notation of \cite{BR}, their splitting bundle is given by the restriction
to $Y^\wedge$ of $\mathsf{P}^{0,-\rho}\widehat{\otimes}_{\U}\mathsf{P}^{-\rho,0}$,
where $\mathsf{P}^{0,-\rho}$ is given as the projection to $\HC^G(\U^{\wedge_0}\operatorname{-}\U^{\wedge_{-\rho}})$
of $\U^{\wedge_0}\otimes L(\rho)$ (where $\U^{\wedge_0}$ acts on this bimodule
from the left as on the direct sum of the $\dim L(\rho)$ copies of the regular module). Similarly, $\mathsf{P}^{-\rho,0}$ is the projection of
$L(\rho)\otimes\U^{\wedge_0}$ to $\HC^G(\U^{\wedge_{-\rho}}\operatorname{-}\U^{\wedge_{0}})$.

Note that  $\U^{\wedge_0}\otimes L(\rho)=\Gamma(\mathcal{D}^{\wedge_0}\otimes L(\rho))$.
We view $\mathcal{M}:=\mathcal{D}^{\wedge_0}\otimes L(\rho)$ as a weakly $G$-equivariant
left $\mathcal{D}^{\wedge_0}$-module (and hence also as a left $\U^{\wedge_0}$-module).
So it carries a right action of $\U$ commuting with the left $\U^{\wedge_0}$-action:
for a local section $\alpha$ of this sheaf and $\xi\in \g$ we define
$\alpha\xi=\xi\alpha-\xi_{\mathcal{M}}\alpha$, where $\xi_{\mathcal{M}}$ denote
the operator on $\mathcal{M}$ coming from the $G$-action. In particular,
the center of $\U$ acts by endomorphisms of the weakly $G$-equivariant
left $\U$-module $\mathcal{M}$. The right action of the center
also commutes with the left action of $\mathcal{D}^{\wedge_0}$. This is because
the coherent sheaf $\mathcal{M}$ on $\tilde{\g}^{(1)}_\Ring$ is locally free
and $\Dcal^{\wedge_0}$ and $\U^{\wedge_0}$ coincide over $\g^{*(1),reg}$.
Let $\mathcal{M}^{-\rho}$ denote the maximal subsheaf of $\mathcal{M}$, where the action of
$\U^G\cong \F[\h^*/(W,\cdot)]$ factors through $\F[\h^*/(W,\cdot)]^{\wedge_{-\rho}}$.

We are going to show that there is a $G$-equivariant $\mathcal{D}^{\wedge_0}$-module isomorphism
\begin{equation}\label{eq:splitting_D_iso}\mathcal{M}^{-\rho}\cong \mathcal{O}_{\mathcal{B}}(\rho)\otimes_{\mathcal{O}_{\mathcal{B}}}\mathcal{D}^{\wedge_{-\rho}}.
\end{equation}
Consider the sheaf $\U^0$ on $\mathcal{B}$ as in \cite[Section 2]{BB}, it is generated
by  $\g$ (as a Lie algebra) and $\mathcal{O}_{\mathcal{B}}$ (as a sheaf of algebras)
with a cross-relation $[\xi,f]=\xi.f$ for the usual $\g$-action on $\mathcal{O}_{\mathcal{B}}$.
It maps to $\mathcal{D}^{\wedge_{\mu}}$ for all $\mu$. The map is not an epimorphism however
the $\Ring$-submodule generated by the image coincides with $\mathcal{D}^{\wedge_{\mu}}$.
This is because the composition of the map $\U^0\rightarrow \mathcal{D}^{\wedge_{\mu}}$
with the projection
$\mathcal{D}^{\wedge_{\mu}}\twoheadrightarrow
\mathcal{D}^{\wedge_{\mu}}\otimes_{\Ring}\F=D^{\mu}_{\mathcal{B}}$ is surjective,
$\mathcal{D}^{\wedge_{\mu}}$ is a coherent sheaf over $\tilde{\g}^{(1)}_\Ring$,
and the support of every coherent sheaf on $\tilde{\g}^{(1)}_\Ring$ intersects the zero fiber
$\tilde{\Nilp}^{(1)}$. As in \cite[(ii)]{BB},
$\mathcal{M}$ is filtered by $G$-equivariant $\Dcal^{\wedge_0}$-modules of the form $\Dcal^{\wedge_0}\otimes \mathcal{O}(\nu)$,
where $\nu$ is a weight of $L(\rho)$. The latter tensor product is a right $\Dcal^{\wedge_{-\nu}}$-module
thanks to (\ref{eq:D_alg_iso}).  Therefore the action of $\U^G\cong \F[\h^*/(W,\cdot)]$
on $\Dcal^{\wedge_0}\otimes \mathcal{O}(\nu)$ from the right is via $\F[\h^*/(W,\cdot)]^{\wedge_{-\nu}}$.
The condition that $\F[\h^*/(W,\cdot)]^{\wedge_{-\nu}}=\F[\h^*/(W,\cdot)]^{\wedge_{-\rho}}$
means that $\nu+\rho\in p\Lambda$. We claim that this means that $\nu=-\rho$. Indeed,
let $w\in W$ be such that $w(\nu+\rho)$ is dominant (in $p\Lambda$).
We have $0\leqslant w(\nu+\rho)\leqslant 2\rho$,
where the first inequality is an equality if and only if $\nu=-\rho$. In particular,
for every simple coroot $\alpha^\vee$, we have $0\leqslant \langle w(\nu+\rho),\alpha^\vee\rangle\leqslant 2$. We conclude that $w(\nu+\rho)$ only holds if $\nu=-\rho$.

Thanks to this filtration on $\mathcal{M}$, the subsheaf $\mathcal{M}^{-\rho}$ is a direct summand of
$\mathcal{M}$.
The weight $-\rho$ occurs with multiplicity $1$, so
$$\mathcal{M}^{-\rho}\cong \mathcal{D}^{\wedge_0}\otimes_{\mathcal{O}_{\mathcal{B}}}
\mathcal{O}_{\mathcal{B}}(\rho).$$
The right hand side is identified with the right hand side of
(\ref{eq:splitting_D_iso}) as a strongly $G$-equivariant $\mathcal{D}^{\wedge_0}-\mathcal{D}^{\wedge_{-\rho}}$-bimodule, thanks to
(\ref{eq:D_alg_iso}).

We conclude that $$\mathsf{P}^{0,-\rho}\cong
\Gamma(\mathcal{O}_{\mathcal{B}}(\rho)\otimes_{\mathcal{O}_{\mathcal{B}}}\mathcal{D}^{\wedge_{-\rho}}),$$
an isomorphism of strongly $G$-equivariant $\U^{\wedge_0}$-$\U^{\wedge_{-\rho}}$-modules.
A completely similar argument shows that
$$\mathsf{P}^{-\rho,0}\cong \Gamma(\mathcal{D}^{\wedge_{-\rho}}\otimes_{\mathcal{O}_{\mathcal{B}}}\mathcal{O}_{\mathcal{B}}(-\rho)).$$
From the construction of the splitting bundle $\Ecal$, see Definition \ref{defi:Ecal},
we observe that over $\g_\h^{*(1),reg}$ (where $\U^{\wedge_0}$ and $\Dcal^{\wedge_0}$
coincide), the bundle $\Ecal$ coincides with $\mathsf{P}^{0,-\rho}\widehat{\otimes}_{\U}\mathsf{P}^{-\rho,0}$. This finishes the proof.

\end{proof}


We claim that the third functor in (\ref{eq:functor_composition}) is a fully faithful embedding. For the same reason as in the proof of \cite[Proposition 3.7]{BR}, the restriction functor
$\Coh^G(\St^{(1),reg})\rightarrow \operatorname{Rep}(\mathfrak{I})$ is a category equivalence.
The third functor in (\ref{eq:functor_composition}) is obtained from this equivalence by changing
the base to $\Spec(\Ring)$. Arguing as in the proof of \cite[Proposition 3.7]{BR} (or Lemma \ref{Lem:full_embedding_geometric}),
we see that it is fully faithful (it is also an equivalence but we will not need that). By \cite[Proposition 3.7]{BR}, we see that (\ref{eq:functor_HC_slice})
is fully faithful on $\HC_{diag}^G(\U^{\wedge_0})$.

Here is the main result of this section, it is a slightly modified version of
\cite[Theorem 6.3]{BR}.

\begin{Prop}\label{Prop:BR}
There is a full $\Ring$-bilinear monoidal
embedding $\SBim^\wedge\hookrightarrow \HC^G_{diag}(\U^{\wedge_0})$,
sending $B_s^{AS}$ to $B_s^{HC}$ for all simple affine reflections
$s$ and $\Delta^{AS}_x$ to $\Delta^{HC}_x$ for all $x\in \Lambda/\Lambda_r$.
\end{Prop}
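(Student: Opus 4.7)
The plan is to realize both $\SBim^\wedge$ and $\HC^G_{diag}(\U^{\wedge_0})$ as full $\Ring$-bilinear monoidal subcategories of the ambient category $\operatorname{Rep}(\mathfrak{I}^\wedge)$, and then to check that the two incarnations agree on the distinguished generators. From Lemma \ref{Lem:full_embedding_geometric} I already have the full $\Ring$-bilinear monoidal embedding $\SBim^\wedge \hookrightarrow \operatorname{Rep}_{fl}(\mathfrak{J}^\wedge)$; composing it with the pullback along the quotient $\mathfrak{I}^\wedge \twoheadrightarrow \mathfrak{J}^\wedge$ (whose kernel is the constant group scheme on $Y^\wedge$ with fiber $G_1$, so the pullback is fully faithful on categories of flat representations) gives the first embedding into $\operatorname{Rep}(\mathfrak{I}^\wedge)$. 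The second embedding is the restriction to $\HC^G_{diag}(\U^{\wedge_0})$ of (\ref{eq:functor_composition}), which by the discussion immediately following it is monoidal, $\Ring$-bilinear, and fully faithful on this subcategory.

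Next I would verify that the images in $\operatorname{Rep}(\mathfrak{I}^\wedge)$ of the generators match: $\Delta_x^{AS}$ with $\Delta_x^{HC}$ for $x \in \Lambda/\Lambda_r$, and $B_s^{AS}$ with $B_s^{HC}$ for $s$ a simple affine reflection. For the standard bimodules, tensoring with $\Delta_x^{HC}$ is a translation equivalence between central characters $0$ and $x^{-1}\cdot 0$; restricting to $\St_\Ring^{(1),reg}$, applying the splitting-bundle equivalence arising from $\Ecal_{diag}^{reg}$, and further restricting to $Y^\wedge$ yields a rank-one $\Ring$-bimodule whose left structure is twisted by $t_x$, matching the image of $\Delta_x^{AS}$ under the forgetful procedure of Remark \ref{Rem:full_embedding_construction}. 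For the reflection bimodules, one runs the analogue of the proof of Lemma \ref{Lem:splitting_coincidence} with the regular weight $-\rho$ replaced by a wall weight $\mu_s$: after identifying $\mathsf{P}^{0,\mu_s}$ and $\mathsf{P}^{\mu_s,0}$ with global sections of suitable line-bundle twists of $\Dcal^{\wedge_{\mu_s}}$, one concludes that the restriction of $B_s^{HC}$ to $Y^\wedge$ is the structure sheaf of the $s$-subvariety $\h^{*\wedge}\times_{(\h^{*\wedge}/\langle s\rangle)}\h^{*\wedge}$, which is exactly the image of $B_s^{AS} = \F[\h^{(1)*}]\otimes_{\F[\h^{(1)*}]^s}\F[\h^{(1)*}]$ under Lemma \ref{Lem:full_embedding_geometric} after completion. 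Both of these identifications are already contained in \cite[Theorem 6.3]{BR}, transposed to the completed and $\HC_{diag}$-restricted setting at hand.

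Finally, $\SBim^\wedge$ is by construction Karoubi-generated under tensor product and direct sums by the $\Delta_x^{AS}$ and $B_s^{AS}$, while $\HC^G_{diag}(\U^{\wedge_0})$ is closed under tensor, direct sums and direct summands inside $\overline{\HC}^G(\U^{\wedge_0})$. Agreement on generators therefore forces the essential image of $\SBim^\wedge$ in $\operatorname{Rep}(\mathfrak{I}^\wedge)$ to lie inside the essential image of $\HC^G_{diag}(\U^{\wedge_0})$, and the full faithfulness of both embeddings turns this containment into the desired full $\Ring$-bilinear monoidal embedding.

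The main obstacle is the reflection-bimodule identification: the derived localization equivalence (\ref{eq:derived_loc_equiv}) is unavailable at the singular weight $\mu_s$, so one must argue entirely on the regular locus of $\g^{*(1)}$, combining the splitting-bundle description of $\Ecal_{diag}$ from Definition \ref{defi:Ecal} with a weight-space analysis of $L(\rho)$ as in the proof of Lemma \ref{Lem:splitting_coincidence}. The standard-bimodule identification, by contrast, is straightforward, as tensoring with $\Delta_x^{HC}$ acts visibly on the $\mathfrak{I}^\wedge$-side by translation along $t_x$.
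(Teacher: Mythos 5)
Your proposal follows essentially the same route as the paper: both embeddings land in $\operatorname{Rep}(\mathfrak{I}^\wedge)$ (via Lemma \ref{Lem:full_embedding_geometric} composed with pullback along $\mathfrak{I}^\wedge\twoheadrightarrow\mathfrak{J}^\wedge$ on one side, and the restriction of (\ref{eq:functor_composition}) on the other), and the conclusion reduces to matching the images of the generators. The paper simply cites \cite[Proposition 6.6]{BR} for $B_s^{AS}\leftrightarrow B_s^{HC}$ and \cite[Lemma 6.8]{BR} for $\Delta_x^{AS}\leftrightarrow\Delta_x^{HC}$ rather than re-deriving these identifications as you sketch, but the argument is the same.
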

\begin{proof}
By Lemma \ref{Lem:full_embedding_geometric} we have a full embedding
$\SBim^\wedge\hookrightarrow \operatorname{Rep}(\mathfrak{J}^\wedge)$
and hence $\SBim^\wedge\hookrightarrow \operatorname{Rep}(\mathfrak{I}^\wedge)$.
By results recalled in this section right after (\ref{eq:functor_HC_slice}), we also have a full embedding
$\HC_{diag}^G(\U^{\wedge_0})\hookrightarrow \operatorname{Rep}(\mathfrak{I}^\wedge)$.
By \cite[Proposition 6.6]{BR}, the images of $B_s^{AS}$ and $B_s^{HC}$
in $\operatorname{Rep}(\mathfrak{I}^\wedge)$  coincides for all simple affine reflections $s$.
By \cite[Lemma 6.8]{BR}, the images of $\Delta_x^{AS},\Delta_x^{HC}$ coincide for all
$x\in \Lambda/\Lambda_r$. This finishes the proof.
\end{proof}


\section{Further study of modular Harish-Chandra bimodules}
In this section we establish further properties of various categories of Harish-Chandra bimodules.
\subsection{Derived localization for Harish-Chandra bimodules}
The goal of this section is to prove the following theorem (Theorem \ref{Thm:localization}
from the introduction).

\begin{Thm}\label{Thm:derived_loc_HC} We have a monoidal exact $\Ring$-bilinear equivalence of
triangulated categories $$D^b(\HC^G(\U^{\wedge_0}))\xrightarrow{\sim}
D^b(\Coh^{G^{(1)}}(\St^{(1)}_\Ring)).$$
\end{Thm}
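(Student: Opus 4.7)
The plan is to obtain the claimed equivalence as a restriction of the $\Ring$-bilinear monoidal derived equivalence
\[\Phi : D^b(\Coh^G \St^{(1)}_\Ring) \xrightarrow{\sim} D^b(\overline{\HC}^G(\U^{\wedge_0}))\]
of \eqref{eq:der_loc_St}, restricted to the full subcategories $D^b(\Coh^{G^{(1)}}\St^{(1)}_\Ring)$ and $D^b(\HC^G(\U^{\wedge_0}))$, respectively.

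First I would verify the identification of the relevant subcategories at the abelian level. A $G$-equivariant sheaf on $\St^{(1)}_\Ring$ lies in $\Coh^{G^{(1)}}$ iff the induced action of $G_1 := \ker(G \twoheadrightarrow G^{(1)})$ on its stalks is trivial, while a weakly $G$-equivariant bimodule in $\overline{\HC}^G(\U^{\wedge_0})$ lies in $\HC^G(\U^{\wedge_0})$ iff it is strongly $G$-equivariant, meaning the differential of the $G$-action coincides with the commutator action $\xi \cdot m - m \cdot \xi$. Using the construction of $\Ecal_{diag}$ (Definition \ref{defi:Ecal} and Lemma \ref{Lem:splitting_coincidence}), one checks that $\Phi$ converts the $G_1$-action on a sheaf into precisely the discrepancy between the $G$-differential and the inner $\g$-action on the corresponding bimodule, so that these two conditions match.

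The main obstacle is upgrading this abelian identification to a derived equivalence: as the paper itself observes just after \eqref{eq:notation_St_R}, the natural functor $D^b(\Coh^{G^{(1)}}) \to D^b(\Coh^G)$ fails to be fully faithful, and the analogous failure occurs for $D^b(\HC^G) \to D^b(\overline{\HC}^G)$ (an extension in $\overline{\HC}^G$ of two strongly $G$-equivariant bimodules need not be strongly $G$-equivariant). Thus $\Phi$ cannot simply be restricted at the derived level. To bypass this, I would route through the noncommutative Springer resolution: by the tilting-bundle equivalence \eqref{eq:derived_equiv_bimod}, $D^b(\Coh^{G^{(1)}}\St^{(1)}_\Ring) \simeq D^b(\Acal^{diag}_\Ring\operatorname{-mod}^{G^{(1)}})$, and Lemma \ref{Lem:same_summands} (combined with Remark \ref{Rem:splitting_coincidence}) relates the tilting endomorphism algebra to $\U^{\wedge_0}$ by a Morita-type equivalence that globalizes to yield $D^b(\Acal^{diag}_\Ring\operatorname{-mod}^{G^{(1)}}) \simeq D^b(\HC^G(\U^{\wedge_0}))$, with the $G^{(1)}$-equivariance on the tilting side matching the strong $G$-equivariance (i.e., HC condition) on the bimodule side via the abelian-level identification above. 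Monoidality and $\Ring$-bilinearity are inherited from the constituent equivalences.
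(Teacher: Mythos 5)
Your proposal follows essentially the same route as the paper: there, too, the equivalence is obtained by composing the localization equivalence (\ref{eq:der_loc_St}) with the tilting-bundle equivalence (\ref{eq:derived_equiv_bimod}), proving that the composite $D^b(\overline{\HC}^G(\U^{\wedge_0}))\xrightarrow{\sim} D^b(\Acal^{diag}_\Ring\operatorname{-mod}^G)$ is t-exact via Remark \ref{Rem:splitting_coincidence} and Lemma \ref{Lem:same_summands} (your ``globalizes'' is justified there by a support argument: the support of every object meets the formal neighborhood of $0\in\g^{*(1)}$, so t-exactness can be checked after completing there), and then matching $G^{(1)}$-equivariance with the Harish-Chandra condition on the hearts exactly as in your first paragraph, by reducing via flatness to the regular locus and checking that $\Ecal^{diag}$ is strongly equivariant. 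The resulting abelian equivalence $\HC^G(\U^{\wedge_0})\xrightarrow{\sim}\Acal^{diag}_\Ring\operatorname{-mod}^{G^{(1)}}$ then passes to bounded derived categories directly, which is how the paper sidesteps the full-faithfulness issue you raise.
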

\begin{proof}
Recall equivalence (\ref{eq:der_loc_St}), in the notation of Section
\ref{SS_BR_HC} it is
\begin{equation}\label{eq:der_loc_St1}
D^b(\overline{\HC}^G(\U^{\wedge_0}))\xrightarrow{\sim}
D^b(\Coh^{G}(\St^{(1)}_\Ring)).
\end{equation}
Also recall the second equivalence in (\ref{eq:derived_equiv_bimod}):
\begin{equation}\label{eq:derived_equiv_bimod1}
D^b(\Coh^{G}\St_{\Ring}^{(1)})\xrightarrow{\sim}
D^b(\Acal^{diag}_{\Ring}\operatorname{-mod}^{G}).
\end{equation}
We will show that
\begin{itemize}
\item[(I)] the composition of (\ref{eq:derived_equiv_bimod1}) and (\ref{eq:der_loc_St1}),
\begin{equation}\label{eq:interm_equiv_local}
D^b(\overline{\HC}^G(\U^{\wedge_0}))\xrightarrow{\sim}
D^b(\Acal^{diag}_{\Ring}\operatorname{-mod}^{G})
\end{equation}
is t-exact,
\item[(II)] and the resulting equivalence
$\overline{\HC}^G(\U^{\wedge_0})\xrightarrow{\sim}\Acal^{diag}_{\Ring}\operatorname{-mod}^{G}$
restricts to
\begin{equation}\label{eq:bimod_equiv_main}
\HC^G(\U^{\wedge_0})\xrightarrow{\sim}\Acal^{diag}_{\Ring}\operatorname{-mod}^{G^{(1)}}.
\end{equation}
\end{itemize}
(II) will yield an equivalence
$D^b(\HC^G(\U^{\wedge_0}))\xrightarrow{\sim}D^b(\Acal^{diag}_{\Ring}\operatorname{-mod}^{G^{(1)}})$.
Combining this with the first equivalence in (\ref{eq:derived_equiv_bimod}) will give an equivalence
\begin{equation}\label{eq:final_loc_equiv} D^b(\HC^G(\U^{\wedge_0}))\xrightarrow{\sim}
D^b(\Coh^{G^{(1)}}(\St^{(1)}_\Ring)).
\end{equation}
Since all intermediate equivalences are $\Ring$-bilinear and monoidal, so is (\ref{eq:final_loc_equiv}).
So (I) and (II) finish the proof of the theorem.

{\it Proof of (I)}: We note that the support in
\begin{equation}\label{eq:small_neighborhood}
\g^{(1)*}\times_{\h^{*(1)}/W}\operatorname{Spec}(\Ring^W)
\end{equation} (a ``small neighborhood''
of the nilpotent cone)
of any object in $\overline{\HC}^G(\U^{\wedge_0})$ intersects $\Spec(\F[[\g^{(1)*}]])$ because
$G^{(1)}$ has only finitely many orbits in the nilpotent cone and zero is the only closed
orbit. The same is true for the objects in $\Acal^{diag}_{\Ring}\operatorname{-mod}^G$.
So it is enough to verify the t-exactness claim after changing the base from
(\ref{eq:small_neighborhood})
 to $\Spec(\F[[\g^{(1)*}]])$. We decorate the corresponding objects
with superscript $\bullet^{\wedge_\g}$, for example
$$\St^{(1)\wedge_\g}:=\St^{(1)}\times_{\g^{*(1)}}\Spec(\F[[\g^{(1)*}]]), \Dcal^{\wedge_0,\g}:=
\Dcal^{\wedge_0}\otimes_{\F[\g^{(1)*}]^\wedge}\F[[\g^{(1)*}]].$$
Recall that, by Remark \ref{Rem:splitting_coincidence},  $\Ecal^{diag,\wedge_\g}$ coincides
with the restriction of $\Ecal'\boxtimes \Ecal'^*$ to $\St^{\wedge_\g}$. By Lemma
\ref{Lem:same_summands}, $\Tcal_\h^{\wedge_\g}$ has the same indecomposable summands as $\Ecal'$.
The claim that the base change of (\ref{eq:interm_equiv_local}) to $\operatorname{Spec}(\F[[\g^{(1)*}]])$
is t-exact follows. So (\ref{eq:interm_equiv_local}) itself is t-exact.

{\it Proof of (II)}: First, we  observe that in both categories $\Acal^{diag}_{\Ring}\operatorname{-mod}^{G^{(1)}},
\HC^G(\U^{\wedge_0})$  every object is a quotient
of a module that is flat over $\F[\g^{*(1)}]$.

In the category $\Acal^{diag}_{\Ring}\operatorname{-mod}^{G^{(1)}}$
we can take objects of the form $\Acal^{diag}_{\Ring}\otimes V'$ for a finite dimensional rational
representation $V'$ of $G^{(1)}$. The reason they are flat is that $\Acal_\h$ is flat over $\F[\g^{*(1)}]$,
see \cite[Section 2.5]{BM}, hence $\Acal^{diag}_{\Ring}$ is flat over $\F[\g^{*(1)}]$.

In the category $\HC^G(\U^{\wedge_0})$ we can take the objects of the form
$\operatorname{pr}_0(V\otimes \U^{\wedge_0})$ for a finite dimensional rational representation
$V$ of $G$.

An object in $\Acal^{diag}_{\Ring}\operatorname{-mod}^{G}$ that is flat over
$\F[\g^{*(1)}]$ lies in $\Acal^{diag}_{\Ring}\operatorname{-mod}^{G^{(1)}}$ if
and only if the action of $G$ on the restriction of this object to an arbitrary nonempty open subscheme
in $\g^{*(1)}_\Ring$ factors through $G^{(1)}$. The similar claim holds for
$\HC^G(\U^{\wedge_0})\subset \overline{\HC}^G(\U^{\wedge_0})$: it consists of strongly
$G$-equivariant objects.
It follows that it is enough to show the claim of (II) after restricting to
a nonempty open subscheme in $\g^{*(1)}_{\Ring}$, for example, to $\g^{*(1),reg}_{\Ring}$.

Over the latter locus equivalences (\ref{eq:der_loc_St1}) and
(\ref{eq:derived_equiv_bimod1}) are t-exact. For $\lambda,\mu$ in the weight lattice of $G$
(or of its cover)
it makes sense to talk about strongly $G$-equivariant objects in
$\Coh^G(\Dcal^{\wedge_{\mu}}\boxtimes \left(\Dcal^{\wedge_{\lambda}}\right)^{opp}|_{\St^{reg}})$. 

So we need to prove that for $\mathcal{F}\in \Coh^G(\St^{(1),reg}_{\Ring})$ the following two conditions
are equivalent:
\begin{itemize}
\item[(i)] The action of $G$ on $\mathcal{F}$ factors through $G^{(1)}$,
\item[(ii)] and $\Ecal^{diag}\otimes \mathcal{F}$ is strongly $G$-equivariant.
\end{itemize}
Tensoring an object in $\Coh^{G^{(1)}}(\St^{(1),reg}_{\Ring})$ with a strongly equivariant object in
$\Coh^G(\Dcal^{\wedge_{0}}\boxtimes \left(\Dcal^{\wedge_{0}}\right)^{opp}|_{\St^{reg}})$ gives
a strongly equivariant object in $\Coh^G(\Dcal^{\wedge_{0}}\boxtimes \left(\Dcal^{\wedge_{0}}\right)^{opp}|_{\St^{reg}})$.
Note also that if two objects in $\Coh^G(\Dcal^{\wedge_{0}}\boxtimes \left(\Dcal^{\wedge_{0}}\right)^{opp}|_{\St^{reg}})$ are strongly equivariant, then  the $G$-action on their Hom
(over $\Dcal^{\wedge_{0}}\boxtimes \left(\Dcal^{\wedge_{0}}\right)^{opp}|_{\St^{reg}}$) factors through
$G^{(1)}$. So the equivalence of (i) and (ii) will follow once we check that $\Ecal^{diag}$ is strongly
equivariant. Note that twist with line bundles on $\mathcal{B}$ preserves the
strong equivariance. This is because
$$\Dcal^{\wedge_{\mu}}\otimes_{\Ocal_\B}\Ocal_{\B}(\mu-\lambda)\in
\Coh^G(\Dcal^{\wedge_{\mu}}\boxtimes \left(\Dcal^{\wedge_{\lambda}}\right)^{opp})$$
is strongly equivariant.

On the other hand, the regular $\U^{\wedge_{-\rho}}$-bimodule is strongly equivariant hence so is its pullback to $\St_{\Ring}^{(1)}$. From the construction of
$\Ecal^{diag}$ in the proof of (I), it follows that
$\Ecal^{diag}$ satisfies the HC condition, which finishes the proof of (II).
\end{proof}

\begin{Rem}\label{Rem:derived_loc_HC}
We will need different versions of the equivalences in Theorem \ref{Thm:derived_loc_HC}.
Consider the full subcategory $\HC^G(\U)_0$ of all objects in $\HC^G(\U^{\wedge_0})$
such that the action of $\Ring$ on the right factors through the residue field, equivalently, the action
of $\U^{\wedge_0}$ factors through $\U_0$. In other words, the objects in $\HC^G(\U)_0$
are exactly the finitely generated strongly $G$-equivariant $\U$-$\U_0$-bimodules.  Since (\ref{eq:bimod_equiv_main})
is $\Ring$-bilinear, it restricts to an equivalence
\begin{equation}\label{eq:bimod_equiv_abelian_specialized}
\HC^G(\U)_0\xrightarrow{\sim} \Acal_\Ring\otimes_{\F[\g^{(1)*}]}\Acal^{opp}\operatorname{-mod}^{G^{(1)}}
\end{equation}
Note that this is (\ref{eq:HC_NCS_equiv}) from Theorem \ref{Thm:HCO_NCS}.

Combining (\ref{eq:bimod_equiv_abelian_specialized}) with (\ref{eq:derived_equiv_bimod_specialized}), we get
\begin{equation}\label{eq:HC_derived_specialized}
D^b(\HC^G(\U)_0)\xrightarrow{\sim} D^b(\Coh^{G^{(1)}}\St^{(1)}).
\end{equation}
Now let $\HC^G(\U_0)$ denote the category of HC $\U_0$-bimodules. Similarly to
(\ref{eq:bimod_equiv_abelian_specialized}) we get an equivalence
\begin{equation}\label{eq:bimod_equiv_abelian_specialized2}
\HC^G(\U_0)\xrightarrow{\sim} \Acal\otimes_{\F[\g^{(1)*}]}\Acal^{opp}\operatorname{-mod}^{G^{(1)}}.
\end{equation}
It is monoidal because (\ref{eq:bimod_equiv_main}) is. And (\ref{eq:bimod_equiv_abelian_specialized})
becomes an equivalence of bimodule categories over the equivalent monoidal categories
from (\ref{eq:bimod_equiv_main}) acting on the left and (\ref{eq:bimod_equiv_abelian_specialized2})
acting on the right.
\end{Rem}

\subsection{Grothendieck group}
The goal of this section is to use Theorem \ref{Thm:derived_loc_HC} to study the $K_0(\HC^G(\U)_0)$.

We have a classical action of $W^{ea}$ on $K_0(\HC^G(\U)_0)$ coming from the reflection functors: tensoring with $B_s^{HC}$ (on the left)
corresponds to the operator $1+s$ for all simple affine reflections,
and tensoring with $\Delta^{HC}_x$ corresponds to $x$ for all $x\in \Lambda/\Lambda_r$.

Our goal is to prove the following result.

\begin{Prop}\label{Prop:HC_K_group}
We have a $W^{ea}$-equivariant isomorphism $\Z W^{ea}\xrightarrow{\sim} K_0(\HC^G(\U)_0)$ that maps
$1\in \Z W^{ea}$ to the class of $\U_0$.
\end{Prop}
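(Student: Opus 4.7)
The plan is to transfer the computation to geometry via the derived equivalence of Remark~\ref{Rem:derived_loc_HC}. Passing to Grothendieck groups in~(\ref{eq:HC_derived_specialized}) yields
$$K_0(\HC^G(\U)_0)\xrightarrow{\sim} K_0(\Coh^{G^{(1)}}(\St^{(1)})),$$
and the Frobenius twist identifies the right hand side with $K^{G}(\St)$. Moreover, by the last sentence of Remark~\ref{Rem:derived_loc_HC}, the equivalence~(\ref{eq:HC_derived_specialized}) is an equivalence of module categories over the monoidal equivalence of Theorem~\ref{Thm:derived_loc_HC}, so the $W^{ea}$-action on $K_0(\HC^G(\U)_0)$ defined by the reflection and translation functors transports to the convolution action of the images of $B_s^{HC}$ and $\Delta_x^{HC}$ on $K^G(\St)$.

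The second step is to invoke the Kazhdan--Lusztig/Ginzburg description of $K^G(\St)$. Their theorem identifies $K^{G\times \mathbb{G}_m}(\St)$ with the extended affine Hecke algebra; specializing at $q=1$, the underlying abelian group $K^G(\St)$ becomes the group ring $\Z W^{ea}$, on which convolution acts via the regular representation. On the algebra side, the standard classes corresponding to the images of $B_s^{HC}$ and $\Delta_x^{HC}$ act as $1+s$ and $x$ respectively, matching the $W^{ea}$-action described in the statement of the proposition.

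The third step is to identify $[\U_0]$ with $1\in \Z W^{ea}$. The bimodule $\U_0$ is the unit object in the monoidal category $\HC^G(\U_0)$ of~(\ref{eq:bimod_equiv_abelian_specialized2}), and since~(\ref{eq:bimod_equiv_abelian_specialized2}) is monoidal, its image on the geometric side is the unit for convolution, namely the pushforward of $\mathcal{O}_{\tilde{\Nilp}^{(1)}}$ along the diagonal embedding $\tilde{\Nilp}^{(1)}\hookrightarrow \St^{(1)}$. Under the Kazhdan--Lusztig/Ginzburg isomorphism this class is precisely $1\in \Z W^{ea}$. The desired $W^{ea}$-equivariant map $\Z W^{ea}\to K_0(\HC^G(\U)_0)$ sending $w\mapsto w\cdot[\U_0]$ is then obtained by composing the regular representation isomorphism $\Z W^{ea}\xrightarrow{\sim} K^G(\St)$ with the inverse of the $K$-theoretic equivalence above, hence is bijective.

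The main technical obstacle is confirming that the Kazhdan--Lusztig/Ginzburg description specializes at $q=1$ to the regular representation of $\Z W^{ea}$ in the present Frobenius-twisted setting, and that the images of the Bott--Samelson and translation bimodules really correspond to the appropriate standard classes. Should either of these verifications become cumbersome, an alternative is to bypass the direct geometric $K$-theory computation by using the highest weight structure on $\Perv(\HC^G(\U)_0)$ promised by Theorem~\ref{Thm:abelian}: this gives a $\Z$-basis of $K_0$ indexed by $W^{ea}$, and one can then verify by a Bruhat-order induction on length that the classes $\{w\cdot[\U_0]\}_{w\in W^{ea}}$ are unitriangularly related to this basis, hence form a $\Z$-basis themselves.
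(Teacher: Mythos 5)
Your main route is the same as the paper's: pass to $K$-theory through the derived localization equivalence (\ref{eq:HC_derived_specialized}) and invoke the Kazhdan--Lusztig/Ginzburg isomorphism \cite[Theorem 7.2.2]{CG} for $K_0^{G^{(1)}}(\St^{(1)})$, with the unit object matching the diagonal structure sheaf. However, the step you defer as ``the main technical obstacle'' --- verifying that the classes of $B_s^{HC}$ and $\Delta_x^{HC}$ really act as $1+s$ and $x$ under the Chriss--Ginzburg identification --- is precisely the substantive content of the paper's proof, and you do not supply an argument for it. The paper handles it by restricting both sides to the regular semisimple locus: there one has an explicit functor $\SBim^\wedge\to\Coh^{G^{(1)}}(\St_\Ring^{(1),rs})$ under which $\Delta_x^{AS}$ and $B_s^{AS}$ become the graph of $x$ and an extension of the graph of $1$ by the graph of $s$, and by the proof of Proposition \ref{Prop:BR} the images of $B_s^{HC},\Delta_x^{HC}$ on that locus coincide with those of $B_s^{AS},\Delta_x^{AS}$; comparing with the explicit classes $\mathcal{O}_{\Lambda_w}(\lambda)$ generating $K_0^{G^{(1)}}(\St^{(1)})$ then gives the equivariance. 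You should supply an argument of this kind rather than leave the matching as an assumption.

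Your proposed fallback is not viable as written: Theorem \ref{Thm:abelian} (equivalently, Theorem \ref{Thm:final_equivalence}) is proved later in the paper and its proof relies on Proposition \ref{Prop:perverse_K0}, which in turn is deduced from the very proposition you are proving. Invoking the highest weight structure on $\Perv(\HC^G(\U)_0)$ here would therefore be circular. A minor additional point: as the paper notes, \cite[Theorem 7.2.2]{CG} is stated over $\mathbb{C}$, so one must observe that the proof carries over verbatim to the present setting; your remark about the $q=1$ specialization gestures at this but should be made explicit.
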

\begin{proof}
Thanks to (\ref{eq:HC_derived_specialized}), we have
$$K_0(\HC^G(\U)_0)\xrightarrow{\sim} K_0^{G^{(1)}}(\St^{(1)}).$$
According to \cite[Theorem 7.2.2]{CG}, we have an isomorphism
\begin{equation}\label{eq:K_theory_iso}\Z W^{ea}\xrightarrow{\sim}K_0^{G^{(1)}}(\St^{(1)}).
\end{equation}

The construction
is as follows. Take an element $wt_\lambda\in W^{ea}$ with $w\in W,\lambda\in \Lambda$.
Consider the graph of $W$ in $\St_\h^{(1),rs}$, where the superscript ``rs''
means ``regular semisimple''. Note that
$$\St_\h^{(1),rs}\cong G^{(1)}\times^{N_{G^{(1)}}(T^{(1)})}(\h^{*(1),reg}\times_{\h^{*(1),reg}/W}\h^{*(1),reg})$$
and the graph in question is
$$\Lambda^{reg}_w:=G^{(1)}\times^{N_{G^{(1)}}(T^{(1)})}\{(wx,x), x\in\h^{*(1),reg}\}$$
Let $\Lambda_w$ be the scheme theoretic intersection of the closure of $\Lambda^{reg}_w$
in $\St_{\h}^{(1)}$ with its reduced subscheme $\St^{(1)}$. We write $\mathcal{O}_{\Lambda_w}(\lambda)$
for the pullback of $\mathcal{O}_{\tilde{\Nilp}^{(1)}}(\lambda)$ under the projection to
the second factor. Then the isomorphism (\ref{eq:K_theory_iso}) sends
$wt_\lambda$ to the class of $\mathcal{O}_{\Lambda_w}(\lambda)$. We note that
$K_0^{G^{(1)}}(\St^{(1)})$ carries an algebra structure by convolution
and  (\ref{eq:K_theory_iso}) is an algebra isomorphism. We also remark that,
while \cite[Theorem 7.2.2]{CG} is stated over $\mathbb{C}$, the proof carries to our setting
verbatim.

It remains to prove that this
identification is $W^{ea}$-equivariant. Let $\St_{\Ring}^{(1),rs}$ denote the intersection
of $\St_\h^{(1),rs}$ with $\St_\Ring^{(1)}$.
We have a natural functor from $\SBim^\wedge$ to
\begin{equation}\label{eq:regular_cats}
\Coh^{G^{(1)}}(\St_{\Ring}^{(1),rs})\cong \Coh^{N_{G^{(1)}}(T^{(1)})}(\operatorname{Spec}(\Ring)^{reg}
\times_{\operatorname{Spec}(\Ring^W)}\operatorname{Spec}(\Ring)^{reg})
\end{equation} via the localization
to the regular locus.  Note that, by the construction of the objects $\Delta_x^{AS},B_s^{AS}\in \SBim^{\wedge}$, their images in the right hand side of (\ref{eq:regular_cats}) are
the graph of $x$ and the extension of the graph of $1$ by the graph of $s$, respectively
(for all simple affine reflections $s$ and all $x\in \Lambda/\Lambda_r$).
On the other hand, we also have a functor from $\HC^G(\U^{\wedge_0})$ to $\Coh^G(\St_{\Ring}^{(1),rs})$,
thanks to Theorem \ref{Thm:derived_loc_HC} or, equivalently, the construction in
Section \ref{SS_BR_HC}. This functor intertwines the actions on $K_0(\HC^G(\U)_0)\cong K_0(\Coh^{G^{(1)}}(\St^{(1)}))$.
Thanks to the proof of Proposition \ref{Prop:BR}, the images of $B_s^{HC}, \Delta_x^{HC}$ in
$\Coh^G(\St_{\Ring}^{(1),rs})$ coincide with those of $B_s^{AS},\Delta_x^{AS}$.
Together with the construction of the previous paragraph,
this implies that
the identification $\Z W^{ea}\xrightarrow{\sim} K_0(\Coh^{G^{(1)}}(\St^{(1)}))$
is $W^{ea}$-equivariant.
\end{proof}

\subsection{Duality functor}
Proposition \ref{Prop:HC_K_group} has a useful application to the study of the duality functor
on $\HC^G(\U)_0$. We now recall how this functor is defined.
Similarly to Section \ref{SS_BR_HC}, $\HC^G(\U)_0$ is a direct summand in the category $\U_0^{opp}\operatorname{-mod}^G$ consisting of all bimodules with generalized central character $0$
on the left.   Similarly to $\U_0\operatorname{-mod}^G$, we can consider the
category $\U_0\operatorname{-mod}^G$.

Note that $\U_0$ is a Gorenstein algebra because its associated graded, $\F[\Nilp]$, is.
So the functor $R\Hom_{\U_0^{opp}}(\bullet,\U_0)$ is an equivalence
$D^b(\U_0^{opp}\operatorname{-mod}^G)\xrightarrow{\sim} D^b(\U_0\operatorname{-mod}^G)^{opp}$.
Let $\theta$ denote the Cartan involution of $G$: on the Lie algebra it sends the Cartan generators
$e_i$ to $f_i$ and vice versa. It gives rise to an equivalence $\U_0\operatorname{-mod}^G
\rightarrow \U_0^{opp}\operatorname{-mod}^G$ that twists the action of $\g$ by the antiautomorphism
$x\mapsto -\theta(x)$ (so that the right action of $\U_0$ gives rise to a left action of $\U_0$
because the action of $-\theta$ on the center is trivial) and twists the action of $G$ by $\theta$.
Denote this equivalence $\U_0^{opp}\operatorname{-mod}^G\xrightarrow{\sim} \U_0\operatorname{-mod}^G$
by $M\mapsto \,^\theta\! M$.

Consider the contravariant auto-equivalence  $\Dual:=\,^\theta\!R\Hom_{\U_0}(\bullet,\U_0)$ of $D^b(\U_0^{opp}\operatorname{-mod}^G)$.

On the other hand, we have an action of the extended affine braid
group $\operatorname{Br}^{ea}$ on $D^b(\HC^G(\U)_0)$ by wall-crossing functors: the length zero elements $x$
acts by $\Delta^{HC}_x\otimes_{\U^{\wedge_0}}\bullet$, while for a simple affine reflection $s$
the corresponding generator $T_s$ acts by tensoring with the cone of $\operatorname{Id}\rightarrow \Theta_s$, where $\Theta_s$ is the classical reflection functor.

The following are basic properties of this equivalence.
They are  standard.

\begin{Lem}\label{Lem:dual_basic}
The following claims are true:
\begin{enumerate}
\item The equivalence $\Dual$ restricts to a contravariant auto-equivalence of
$D^b(\HC^G(\U)_0)$. Moreover, $\Dual^2\cong \operatorname{id}$.
\item For $M\in D^b(\U_0^{opp}\operatorname{-mod}^G)$ and a finite dimensional rational
representation $V$ of $G$ we have $\Dual(V\otimes M)\xrightarrow{\sim} V^\vee \otimes \Dual(M)$
for $V^\vee:=\,^\theta V^*$.
\item For $x\in W^{ea}$ we have $T_x\circ \Dual\cong \Dual\circ T_{x^{-1}}^{-1}$. Here we write
$T_x$ for the wall-crossing equivalence of $D^b(\HC^G(\U)_0)$ corresponding to $x\in W^{ea}$.
\end{enumerate}
\end{Lem}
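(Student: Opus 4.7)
The plan is to establish each claim using the interaction between the Gorenstein duality on $\U_0$ and the bimodule structure of HC modules. The key input is that $\operatorname{gr} \U_0 \cong \F[\Nilp]$ is Gorenstein (the nilpotent cone is a complete intersection in $\g^*$), so $\U_0$ itself has finite injective dimension on both sides, making $R\Hom_{\U_0^{opp}}(-, \U_0)$ a contravariant equivalence between bounded derived categories of finitely generated right and left $\U_0$-modules.

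For (1), I would verify that this dualizing functor preserves weak $G$-equivariance (since $G$ acts on $\U_0$ and $R\Hom$ is functorial), preserves the Harish-Chandra condition (the adjoint $\g$-action differentiates the $G$-action on $R\Hom_{\U_0^{opp}}(M,\U_0)$ as well), and exchanges the sides on which the central character vanishes. The $\theta$-twist then converts the resulting left $\U_0$-module back into a right $\U_0$-module while preserving strong $G$-equivariance, placing the output back in $\HC^G(\U)_0$. For $\Dual^2 \cong \operatorname{id}$, I would combine biduality $R\Hom(R\Hom(M,\U_0),\U_0)\cong M$ (valid since every HC bimodule is perfect over $\U_0^{opp}$, being finitely generated over a Gorenstein ring of finite injective dimension) with $\theta^2 = \operatorname{id}$.

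Claim (2) is the derived tensor-Hom adjunction: for finite-dimensional $V$, the natural isomorphism
$$R\Hom_{\U_0^{opp}}(V\otimes M,\U_0)\cong V^*\otimes R\Hom_{\U_0^{opp}}(M,\U_0)$$
is $G$-equivariant for the diagonal action. Applying the $\theta$-twist converts $V^*$ into $V^\vee={}^\theta V^*$, yielding $\Dual(V\otimes M)\cong V^\vee\otimes \Dual(M)$.

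For (3), I would treat the two families of generators of $\Br^{ea}$ separately. For $x\in\Lambda/\Lambda_r$ of length zero we have $T_{x^{-1}}^{-1}=T_x$ in the braid group, so the target identity is $\Dual T_x\cong T_x\Dual$; using (2) extended to the invertible bimodule $\Delta_x^{HC}$, this reduces to the self-duality statement $\Dual(\Delta_x^{HC})\cong \Delta_x^{HC}$ (after tracking the action of $\theta$ on $\Lambda/\Lambda_r$). For a simple affine reflection $s$ one has $s^{-1}=s$ and $T_{s^{-1}}^{-1}=T_s^{-1}$; I would use that $\Theta_s=B_s^{HC}\otimes_{\U^{\wedge_0}}(-)$ commutes with $\Dual$, since $B_s^{HC}=\mathsf{P}^{0,\mu_s}\widehat\otimes_{\mathcal{U}\g}\mathsf{P}^{\mu_s,0}$ is self-dual (the Cartan involution interchanges the two projection factors). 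Applying $\Dual$ to the defining triangle $\operatorname{Id}\to\Theta_s\to T_s\xrightarrow{+1}$ and rotating then realizes $\Dual T_s\Dual^{-1}$ as the (shifted) cone of $\Theta_s\to\operatorname{Id}$, which is $T_s^{-1}$ by construction.

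The main obstacle I anticipate is the compatibility of $\Dual$ with the standard and reflection bimodules, namely $\Dual(\Delta_x^{HC})\cong\Delta_x^{HC}$ and $\Dual(B_s^{HC})\cong B_s^{HC}$; these require unpacking the constructions of $B_s^{HC}$ and $\Delta_x^{HC}$ as projections of induced modules of the form $V\otimes\U^{\wedge_0}$ and computing explicitly how the Cartan involution interchanges the roles of left and right, while also tracking its effect on the labelling lattice $\Lambda/\Lambda_r$.
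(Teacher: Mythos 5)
The paper offers no proof of this lemma (it is dismissed as ``standard''), so there is nothing to compare line by line; your proposal supplies the expected argument, and it is correct. The mechanism you use for parts (2) and (3) --- that $\Dual$ intertwines the functors $\pr_0(V\otimes\bullet)$ for self-dual $V$, that $\Theta_s$ and $\Delta_x^{HC}\otimes\bullet$ are of this form, and that rotating the triangle $\operatorname{Id}\to\Theta_s\to T_s$ under the contravariant exact $\Dual$ produces the cone of $\Theta_s\to\operatorname{Id}$, i.e.\ $T_s^{-1}$ --- is exactly the mechanism the paper itself invokes later (Remark \ref{Rem:deformed_duality} and the proof of Lemma \ref{Lem:dual_SBim}), so your reduction to the self-duality of $B_s^{HC}$ and $\Delta_x^{HC}$ is the right one; note also that for length-zero $x$ one has $T_{x^{-1}}^{-1}=T_x$ because $x\mapsto\Delta_x^{HC}$ is a group homomorphism to invertible objects, as you say. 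The only wobble is in your justification of biduality in (1): being finitely generated over a Gorenstein ring of finite injective dimension does not make a module perfect. Either invoke the finite global dimension of $\U_0$ (which the paper asserts and uses elsewhere, e.g.\ in Step 3 of the proof of Theorem \ref{Thm:Ocat_cl_main}), in which case finitely generated does imply perfect, or simply use that $R\Hom(-,\U_0)$ is a dualizing functor on $D^b$ of finitely generated modules over a Gorenstein ring, for which biduality holds without perfection. You should also record the one-line extension from generators to all of $W^{ea}$: the identity $T_xT_y\Dual\cong\Dual T_{(xy)^{-1}}^{-1}$ follows formally from the generator cases, which is what makes the asserted ``anti-equivariance'' consistent.
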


\begin{Cor}\label{Cor:dual_K_0}
The functor $\Dual$ gives the identity on $K_0(\HC^G(\U_0))$.
\end{Cor}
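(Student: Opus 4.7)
The plan is to reduce the corollary to the single computation $[\Dual(\U_0)] = [\U_0]$, by combining Lemma \ref{Lem:dual_basic}(3) with the structure of $K_0$ from Proposition \ref{Prop:HC_K_group}. Write $\delta$ for the $\Z$-linear endomorphism of $K_0 \cong \Z W^{ea}$ induced by $\Dual$. Under the identification of Proposition \ref{Prop:HC_K_group}, the wall-crossing functor $T_x$ acts on $K_0$ as left multiplication by $x$: for a simple affine reflection $s$, the distinguished triangle defining $T_s$ gives $[T_s M] = [\Theta_s M] - [M] = ((1+s) - 1)[M] = s \cdot [M]$, and the general case follows by composition. Inverses in the braid group descend to inverses in $W^{ea}$, so $T_{x^{-1}}^{-1}$ also acts as left multiplication by $x$ on $K_0$. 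Consequently Lemma \ref{Lem:dual_basic}(3) descends to the identity $x \cdot \delta(v) = \delta(x \cdot v)$ for all $x \in W^{ea}$ and all $v \in \Z W^{ea}$.

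Thus $\delta$ is an endomorphism of the left regular $\Z W^{ea}$-module, and so is given by right multiplication by $\delta(1) = [\Dual(\U_0)]$. The corollary therefore reduces to producing an isomorphism $\Dual(\U_0) \cong \U_0$ in $\HC^G(\U)_0$.

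Unwinding the definition, $\Dual(\U_0) \cong {}^\theta\U_0$, the regular right $\U_0$-module with its $G$- and right $\U_0$-actions twisted by the Cartan involution. The antiautomorphism $\tau := -\theta$ of $\U$ preserves the augmentation-type ideal cutting out $\U_0$ and descends to an antiautomorphism of $\U_0$. I claim that the map $v \mapsto \tau(v)$ defines an isomorphism $\U_0 \xrightarrow{\sim} {}^\theta\U_0$ in $\HC^G(\U)_0$. Right $\U_0$-linearity is the anti-homomorphism property of $\tau$: $\tau(vu) = \tau(u)\tau(v) = \tau(u)\cdot\tau(v)$, which matches the right action in ${}^\theta\U_0$. $G$-equivariance follows from the intertwining relation $\theta \circ \operatorname{Ad}(g) = \operatorname{Ad}(\theta(g)) \circ \theta$ (immediate because $\theta$ is a group automorphism), which gives $\tau(\operatorname{Ad}(g)(v)) = \operatorname{Ad}(\theta(g))(\tau(v))$. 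The outer left $\U$-module structure needed for a Harish-Chandra bimodule is determined by strong $G$-equivariance, so the same map automatically respects it. The main point of the whole proof is this explicit isomorphism; the rest is formal. Combining the two steps, $\delta$ is right multiplication by $1$, so $\delta = \operatorname{id}$, as required.
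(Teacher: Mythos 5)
Your proof is correct and follows essentially the same route as the paper: use Lemma \ref{Lem:dual_basic}(3) to show that the induced map on $K_0\cong \Z W^{ea}$ commutes with the left $W^{ea}$-action from Proposition \ref{Prop:HC_K_group}, hence is right multiplication by the class of $\Dual(\U_0)$, and then observe $\Dual(\U_0)\cong\U_0$. The only difference is that you spell out the explicit isomorphism $\U_0\xrightarrow{\sim}{}^\theta\U_0$ via the antiautomorphism $-\theta$, which the paper takes for granted; that verification is correct.
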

\begin{proof}
We have $\Dual(\U_0)\cong \U_0$. Thanks to (3) of Lemma \ref{Lem:dual_basic} we see that
$\Dual$ acts on $K_0(\HC^G(\U)_0)$ by a $W^{ea}$-equivariant map. Now we are done by
Proposition \ref{Prop:HC_K_group}.
\end{proof}

\begin{Rem}\label{Rem:deformed_duality}
Consider $D^b(\HC^G(\U^{\wedge_0}))$ as a direct summand
of $D^b(\U^{\wedge_0,opp}\operatorname{-mod}^G)$. This allows us to define
the contravariant auto-equivalence $\Dual_{\Ring}$ of $D^b(\HC^G(\U^{\wedge_0}))$
by $$\Dual_{\Ring}:=\,^\theta\! R\Hom_{\U^{\wedge_0}}(\bullet,\U^{\wedge_0})$$ For $M\in \HC^G(\U)_0$ we have
$\Dual_{\Ring}(M)=\Dual(M)[\dim \h]$ because $\U^{\wedge_0}$ is flat over $\Ring$. We also note that $\Dual_{\Ring}$
satisfies the direct analog   of Lemma
\ref{Lem:dual_basic}.
\end{Rem}

\subsection{HC-tilting bimodules}
Here we define a full subcategory of {\it HC-tilting} bimodules in
$\HC^G(\U^{\wedge_0})$ and in $\HC^G(\U)_0$.

\begin{defi}\label{defi:hilting}
An object in $\HC^G(\U^{\wedge_0})$ is called {\it HC-tilting} if it is a direct summand
in $T\otimes \U^{\wedge_0}$ for a tilting $G$-module $T$.
\end{defi}

One can define HC-tilting objects in $\HC^G(\U)_0$ similarly. Let $\Hilt^G(\U^{\wedge_0})$
and $\Hilt^G(\U)_0$ denote the full subcategories of HC-tilting bimodules in the respective categories.

The following lemma describes basic properties of HC-tilting bimodules. The proofs are standard
and so are omitted.

\begin{Lem}\label{Lem:HC-tilting_basic}
The following claims are true:
\begin{enumerate}
\item $\Hilt^G(\U^{\wedge_0})$ is a Karoubian monoidal subcategory of $\HC^G(\U^{\wedge_0})$.
\item The subcategories $\Hilt^G(\U^{\wedge_0}), \Hilt^G(\U)_0$ are stable with respect
to the duality functors $\Dual_\Ring,\Dual$, respectively.
\item We have $B_s^{HC},\Delta_x^{HC}\in \Hilt^G(\U^{\wedge_0})$
for all simple affine reflections $s$ and all $x\in \Lambda/\Lambda_r$.
In particular, the image of the full embedding from Proposition
\ref{Prop:BR} is in $\Hilt^G(\U^{\wedge_0})$.
\end{enumerate}
\end{Lem}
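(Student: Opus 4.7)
The plan is to treat the three claims in order; parts (1) and (2) reduce to well-known closure properties of the tensor category of tilting $G$-modules, while (3) requires the observation that translation functors can be realized by tensoring with tilting modules.

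For (1), the Karoubian property is immediate from the definition of $\Hilt^G(\U^{\wedge_0})$ as the full subcategory of summands of bimodules $T\otimes \U^{\wedge_0}$. For monoidality, if $M_i$ is a summand of $T_i\otimes \U^{\wedge_0}$ with $T_i$ tilting for $i=1,2$, the isomorphism
\[(T_1\otimes \U^{\wedge_0})\otimes_{\U^{\wedge_0}}(T_2\otimes \U^{\wedge_0})\cong T_1\otimes T_2\otimes \U^{\wedge_0}\]
exhibits $M_1\otimes_{\U^{\wedge_0}}M_2$ as a summand of the right-hand side. Since $p>h$, Mathieu's theorem ensures that $T_1\otimes T_2$ is again a tilting $G$-module, so this is HC-tilting.

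For (2), it suffices to check that $\Dual_\Ring$ sends a generator $T\otimes \U^{\wedge_0}$ (with $T$ tilting) to an HC-tilting bimodule. Since this bimodule is free as a right $\U^{\wedge_0}$-module, $R\Hom_{\U^{\wedge_0}}(T\otimes \U^{\wedge_0},\U^{\wedge_0})$ is concentrated in degree zero and equals $T^*\otimes \U^{\wedge_0}$; twisting by $\theta$ yields $\Dual_\Ring(T\otimes \U^{\wedge_0})\cong T^\vee\otimes \U^{\wedge_0}$, where $T^\vee={}^\theta T^*$ is again tilting because tilting modules are closed under both $T\mapsto T^*$ and under the involution $\theta$. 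The same computation handles $\Dual$ on $\Hilt^G(\U)_0$.

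For (3), the key input is that translation functors can be implemented using tilting modules. Concretely, for a dominant weight $\lambda$ the tilting module $T(\lambda)$ contains $\lambda$ as its highest weight with multiplicity one, so for compatible $\mu,\nu$ with $\nu-\mu$ in the appropriate $W$-orbit of $\lambda$ one can realize the translation bimodule from $\mu$ to $\nu$ as a direct summand of $T(\lambda)\otimes \U^{\wedge_\mu}$ by the standard translation principle. Applied to the length-zero elements of $W^{ea}$, this identifies $\Delta_x^{HC}$ for $x\in \Lambda/\Lambda_r$ as a summand of $T\otimes \U^{\wedge_0}$ for a suitable tilting $T$. For $B_s^{HC}=\mathsf{P}^{0,\mu_s}\widehat{\otimes}_\U\mathsf{P}^{\mu_s,0}$ the same reasoning expresses each factor as a summand of $T_i\otimes \U^{\wedge_\bullet}$ for tilting $T_i$; combining them via the isomorphism used in (1) and invoking Mathieu's theorem realizes $B_s^{HC}$ as a summand of $T_1\otimes T_2\otimes \U^{\wedge_0}$. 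The ``in particular'' statement then follows at once since the essential image of the embedding in Proposition \ref{Prop:BR} is generated by the $B_s^{HC}$ and $\Delta_x^{HC}$ under the monoidal and direct-summand operations already shown to preserve HC-tiltness.

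The main obstacle is justifying (3) carefully: one has to verify that the customary realization of translation functors via Weyl modules can be replaced by a realization via tilting modules (equivalently, that the projection to a HC block picks out a summand of $T(\lambda)\otimes \U^{\wedge_\mu}$), and that this projection is compatible with the completion at the zero HC character. Both points are standard but need to be articulated precisely in the completed setting of $\U^{\wedge_0}$.
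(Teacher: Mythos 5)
The paper explicitly omits the proof of this lemma as standard, and your argument is precisely that standard proof: (1) via $(T_1\otimes \U^{\wedge_0})\otimes_{\U^{\wedge_0}}(T_2\otimes \U^{\wedge_0})\cong T_1\otimes T_2\otimes \U^{\wedge_0}$ plus closure of tilting modules under tensor product, (2) via the freeness of $T\otimes\U^{\wedge_0}$ as a right module and Lemma \ref{Lem:dual_basic}(2), and (3) via the realization of translation bimodules as block projections of $T\otimes\U^{\wedge_\mu}$ with $T$ tilting. You also correctly single out the one point in (3) that genuinely requires care — that the projection $\pr_\nu(V\otimes\U^{\wedge_\mu})$ is unchanged when the module $V$ with prescribed extreme weight is replaced by the corresponding indecomposable tilting module (the usual independence statement in the translation principle, valid here since only the extreme weights, each of multiplicity one, contribute to the relevant block) — so the proposal is complete and consistent with the paper's intent.
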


Here is another important property.

\begin{Prop}\label{Prop:Ext_vanishing}
For $\mathcal{B}_1,\mathcal{B}_2\in \Hilt^G(\U)_0$, their higher Ext groups in $\HC^G(\U)_0$
vanish.
\end{Prop}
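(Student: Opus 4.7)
The plan is to reduce the Ext-vanishing to a classical cohomology vanishing for rational $G$-modules with a good filtration. By Definition \ref{defi:hilting} and the Karoubian property, every $\mathcal{B}_j \in \Hilt^G(\U)_0$ is a direct summand of $\operatorname{pr}_0(T_j \otimes \U_0)$ for some tilting $G$-module $T_j$. The block decomposition (\ref{eq:category_decomposition}), specialized to $\U_0^{opp}\operatorname{-mod}^G$, exhibits $\HC^G(\U)_0$ as a direct summand, so Ext groups in $\HC^G(\U)_0$ agree with those in $\U_0^{opp}\operatorname{-mod}^G$. Consequently it suffices to show
\[
\Ext^i_{\U_0^{opp}\operatorname{-mod}^G}(T_1 \otimes \U_0, T_2 \otimes \U_0) = 0 \qquad \text{for } i > 0
\]
for any tilting $G$-modules $T_1, T_2$.

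Next I would reduce this to a computation in rational $G$-modules via adjunction. The functor $V \mapsto V \otimes \U_0$ from rational $G$-modules to $\U_0^{opp}\operatorname{-mod}^G$ is exact and left adjoint to the forgetful functor, so the latter preserves injectives. Taking an injective resolution of $M$ in $\U_0^{opp}\operatorname{-mod}^G$ and applying the adjunction isomorphism on $\Hom$ yields
\[
\Ext^i_{\U_0^{opp}\operatorname{-mod}^G}(T_1 \otimes \U_0, M) \cong \Ext^i_G(T_1, M)
\]
for any $M$, where on the right $M$ is regarded merely as a rational $G$-module. Specializing to $M = T_2 \otimes \U_0$, this becomes $H^i(G, T_1^* \otimes T_2 \otimes \U_0)$.

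The final ingredient is the good-filtration machinery. Since $p > h$, the tensor product $T_1^* \otimes T_2$ is again a tilting $G$-module by Mathieu's theorem, and the tensor product of a tilting module with any module admitting a good filtration again admits a good filtration. The PBW filtration on $\U_0$ is $G$-stable with associated graded isomorphic as a $G$-module to $\F[\Nilp]$; the coordinate ring of the nilpotent cone has a good $G$-filtration by the classical Kempf--Andersen--Jantzen cohomology vanishing on the Springer resolution (valid under $p > h$), and good filtrations lift from the associated graded to the filtered algebra. Thus $T_1^* \otimes T_2 \otimes \U_0$ is good-filtered, and its positive rational $G$-cohomology vanishes, completing the argument.

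The main obstacle is the good filtration of $\F[\Nilp]$ (and its lift to $\U_0$), which genuinely uses the hypothesis $p > h$ and classical vanishing results on $\tilde{\Nilp}$; by contrast, the block decomposition and the adjunction-plus-injective-resolution step are routine.
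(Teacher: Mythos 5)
Your argument is correct, and it follows the paper's reduction steps exactly up to the last move: both proofs pass from $\Hilt^G(\U)_0$ to the Ext groups $\Ext^i_{\U_0,G}(T_1\otimes\U_0,T_2\otimes\U_0)$ computed in $\U_0^{opp}\operatorname{-mod}^G$, and both use the adjunction to rewrite this as a rational $\Ext^i_G$ into $\U_0$. The divergence is in how the vanishing of $\Ext^i_G(V,\U_0)$ for $V$ Weyl-filtered is established. The paper does \emph{not} claim that $\U_0$ itself has a good filtration; instead it uses the finite Koszul resolution of $\U_0$ by direct sums of copies of $\U$ (coming from the Harish-Chandra center being a polynomial ring over which $\U$ is free) together with the fact that $\U$ is costandardly filtered, and then dimension-shifts. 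You instead prove directly that $\U_0$ is good-filtered, by combining the identification $\operatorname{gr}\U_0\cong\F[\Nilp]$ (Veldkamp/Kostant in characteristic $p$, which the paper also uses elsewhere), the good filtration on $\F[\Nilp]$ via Kempf--Andersen--Jantzen vanishing on the Springer resolution and normality of $\Nilp$, and the lifting of good filtrations along an exhaustive $G$-stable filtration (valid by Donkin's $\Ext^1(\Delta(\lambda),-)$-criterion, since extensions and filtered unions of good-filtered modules are good-filtered). Your route is a genuine alternative: it yields the stronger statement that $\U_0$ has a good filtration and makes the second Weyl-filtration reduction (from two tiltings to one) unnecessary, but it imports the nontrivial geometric input about $\F[\Nilp]$; the paper's route is more elementary, needing only that $\U$ is good-filtered plus the Koszul resolution. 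One small point worth making explicit if you keep your version: the lifting of a good filtration from $\operatorname{gr}\U_0$ to $\U_0$ and the passage to the infinite-dimensional tensor product $T_1^*\otimes T_2\otimes\U_0$ both rely on $\Ext^i_G(\Delta(\lambda),-)$ commuting with filtered colimits, which holds because $\Delta(\lambda)$ is finite-dimensional.
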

\begin{proof}
Recall that each object in
$\Hilt^G(\U)_0$ is a direct summand in $T\otimes \U_0$, where $T$ is a tilting
representation of $G$. Also recall that the category $\HC^G(\U)_0$ is a direct summand in
$\U_0^{opp}\operatorname{-mod}^G$. So it is enough to show that for all tilting representations
$T_1,T_2$ of $G$ we have
\begin{equation}\label{eq:HC-tilting_vanishing}
\Ext^i_{\U_0,G}(T_1\otimes \U_0, T_2\otimes \U_0)=0,\forall i>0,
\end{equation}
where the Ext is taken in the category $\U_0^{opp}\operatorname{-mod}^G$.

The duals and tensor products of tiltings are again tilting, see \cite[Proposition 4.19]{Jantzen}
for tensor products. And every tilting
object is Weyl filtered (i.e., filtered by Weyl modules). So it is enough to show that
$$\Ext^i_{\U_0,G}(V\otimes \U_0, \U_0)=0$$
for all Weyl filtered representations $V$ of $G$.

Note that the left hand side is $\Ext^i_G(V,\U_0)$. As a $G$-module, $\U_0$ admits
a resolution (from the left) whose  terms are direct sums several copies
of the $G$-module $\U$. The latter is
costandardly filtered, this follows from \cite[Section 4.21]{Jantzen}.
So $\Ext^i_G(V,\U_0)=0$ for all $i>0$. This implies the claim of the proposition.
\end{proof}

This proposition has a standard corollary.

\begin{Cor}\label{Cor:flat_hom}
The following claims hold:
\begin{enumerate}
\item For $\mathcal{B}_1,\mathcal{B}_2\in \Hilt^G(\U^{\wedge_0})$, we have that
$\Hom(\mathcal{B}_1,\mathcal{B}_2)$ is flat over $\Ring$ and
$\Hom(\mathcal{B}_1\otimes_{\Ring}\F,\mathcal{B}_2\otimes_{\Ring}\F)=
\Hom(\mathcal{B}_1,\mathcal{B}_2)\otimes_\Ring\F$.
\item The functor $\bullet\otimes_{\Ring}\F$ defines a bijection between the indecomposable
objects in $\Hilt^G(\U^{\wedge_0})$ and $\Hilt^G(\U)_0$.
\item For $\mathcal{B}_1,\mathcal{B}_2\in \Hilt^G(\U^{\wedge_0})$, their higher Ext groups
in $\HC(\U^{\wedge_0})$ vanish.
\end{enumerate}
\end{Cor}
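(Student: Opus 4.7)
The plan is to deduce all three claims from Proposition \ref{Prop:Ext_vanishing} together with flatness of $\U^{\wedge_0}$ over $\Ring$, proving (3) first, then (1), and obtaining (2) by idempotent lifting.

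For (3), use the direct-summand decomposition (\ref{eq:category_decomposition}) to compute Ext in the ambient category $\U^{\wedge_0,opp}\operatorname{-mod}^G$, where it coincides with Ext in the subcategory $\HC^G(\U^{\wedge_0})$. Each HC-tilting is a summand of some $T_i \otimes \U^{\wedge_0}$; freeness as a right $\U^{\wedge_0}$-module reduces the higher Ext to $\Ext^i_G(T_1, T_2 \otimes \U^{\wedge_0})$. To see this vanishes, filter by $\mathfrak{m}^n \U^{\wedge_0}$ (with $\mathfrak{m} \subset \Ring$ the maximal ideal); each subquotient is of the form $T_2 \otimes V^{(k)} \otimes \U_0$ for a $G$-module $V^{(k)}$, hence costandardly filtered as a $G$-module (inheriting the good filtration on $\U$ from Jantzen, as in the proof of Proposition \ref{Prop:Ext_vanishing}). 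This yields $\Ext^i_G$-vanishing at each finite stage, and Mittag--Leffler (via surjective transition maps on $\Hom_G$, itself guaranteed by $\Ext^1_G$-vanishing for costandard targets) transports vanishing to the inverse limit.

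For (1), the same identification gives
\[
\Hom_{\U^{\wedge_0,opp},G}(T_1\otimes \U^{\wedge_0}, T_2 \otimes \U^{\wedge_0}) = (T_1^* \otimes T_2 \otimes \U^{\wedge_0})^G,
\]
of which $\Hom(\mathcal{B}_1,\mathcal{B}_2)$ is a direct summand. Using flatness of $\U^{\wedge_0}$ over $\Ring$, the derived base change $R\Hom_G(T_1, T_2 \otimes \U^{\wedge_0}) \otimes^L_\Ring \F \cong R\Hom_G(T_1, T_2 \otimes \U_0)$ collapses to degree zero on both sides (by (3) and Proposition \ref{Prop:Ext_vanishing}), which simultaneously proves compatibility of Hom with $\otimes_\Ring \F$ and yields $\operatorname{Tor}^\Ring_{>0}(\Hom, \F) = 0$; over the regular local ring $\Ring$, this forces flatness (applied to the finitely generated $\Ring$-module $\Hom(\mathcal{B}_1,\mathcal{B}_2)$, using that central finite generation descends to $\Ring$-finite-generation on the HC-tilting supports).

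Part (2) is then idempotent lifting: by (1), $\End(\mathcal{B})$ is a finitely generated $\Ring$-algebra whose residue modulo $\mathfrak{m}$ is $\End(\mathcal{B} \otimes_\Ring \F)$, and since $\Ring$ is complete local, $\End(\mathcal{B})$ is complete semi-perfect. Primitive idempotents lift uniquely, so $\mathcal{B}\mapsto\mathcal{B}\otimes_\Ring\F$ preserves indecomposability and injects; surjectivity comes from writing any indecomposable $\mathcal{B}'\in\Hilt^G(\U)_0$ as a summand of some $T\otimes\U_0$ via a primitive idempotent in $\End(T\otimes\U_0)$, lifting it to $\End(T\otimes\U^{\wedge_0})$, and taking the associated summand. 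The main obstacle will be the inverse-limit/Mittag--Leffler bookkeeping in the Ext computation of (3); the cleanest route is to show directly that $\U^{\wedge_0}$ is a costandardly filtered $G$-module in the appropriate completed sense, inherited from the good filtration on $\U$ by the flat central base change along $\F[\h^*/(W,\cdot)] \to \Ring$, after which (1) and (3) become essentially formal consequences.
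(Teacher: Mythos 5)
Your proposal is essentially the standard deduction from Proposition \ref{Prop:Ext_vanishing} that the paper has in mind (the paper omits the proof, calling the corollary ``standard''), and the overall route --- reduce to $\Ext^i_G(V,\U^{\wedge_0})$ for $V$ Weyl-filtered, pass along the $\mathfrak{m}$-adic filtration of $\U^{\wedge_0}$ using flatness over $\Ring$ and Mittag--Leffler, then get (1) by derived base change and (2) by idempotent lifting over the complete local ring --- is sound. Two small inaccuracies are worth fixing. First, the subquotients $(\mathfrak{m}^n/\mathfrak{m}^{n+1})\otimes\U_0$ are \emph{not} costandardly filtered $G$-modules: as in the proof of Proposition \ref{Prop:Ext_vanishing}, $\U_0$ only admits a finite left resolution by (direct sums of copies of) the costandardly filtered module $\U$; but all you actually need is $\Ext^{>0}_G(V,\U_0)=0$, which is exactly what that proof establishes, so the Mittag--Leffler step goes through unchanged. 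Second, your justification of finite generation of $\Hom(\mathcal{B}_1,\mathcal{B}_2)$ over $\Ring$ via ``HC-tilting supports'' is not right as stated: before taking $G$-invariants, $T_1^*\otimes T_2\otimes\U^{\wedge_0}$ is only finite over the completed $p$-center $\F[\g^{(1)*}]^\wedge$, which is not finite over $\Ring$. The correct point is that taking $G$-invariants cuts this down: $(V\otimes\U^{\wedge_0})^G$ is finitely generated over $Z(\U^{\wedge_0})^{G}\cong\Ring^W$, hence over $\Ring$ (this is the modular Kostant--Veldkamp theorem for $p>h$, used throughout \cite{BR}; compare the paper's treatment of $\Asf_T$ as a finite free $\Ring$-module in the proof of Theorem \ref{Thm:derived_equivalence}). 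With that, the local criterion of flatness over the regular local ring $\Ring$ applies to the finitely generated module $\Hom(\mathcal{B}_1,\mathcal{B}_2)$, and the rest of your argument, including the idempotent lifting for (2), is correct.
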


\subsection{Perverse bimodules}\label{SS_perverse}
Here we recall a t-structure on $D^b(\HC^G(\U)_0)$ called the {\it perverse t-structure}, compare to
\cite{AB}. For this t-structure we have
\begin{equation}\label{eq:perverse_spec_defn}
\begin{split}
& \,^p\!D^{b,\leqslant 0}(\HC^G(\U)_0)=\{M\in D^b(\HC^G(\U)_0)| \dim \operatorname{Supp}H^i(M)\leqslant
\dim \Nilp-2i\},\\
& \,^p\!D^{b,\geqslant 0}(\HC^G(\U)_0)=\Dual(\,^p\!D^{b,\leqslant 0}(\HC^G(\U)_0)).
\end{split}
\end{equation}

The proof that this is indeed a t-structure copies that for $\F[\mathcal{N}]$ instead of
$\U_0$ in \cite[Section 3]{AB}.
The heart of this structure will be denoted by $\Perv(\HC^G(\U)_0)$, objects there will be called
{\it perverse bimodules}.

\begin{Ex}\label{Ex:perverse}
Let $V$ be a finite dimensional rational representation. Then $\operatorname{pr}_0(V\otimes \U_0)$
is a perverse bimodule. Indeed, this object lies in $\,^p\!D^{b,\leqslant 0}$ and its dual is
$\operatorname{pr}_0(V^\vee\otimes \U_0)$ by (2) of Lemma \ref{Lem:dual_basic}.
\end{Ex}

\begin{Lem}\label{Lem:finite_length}
All objects in $\Perv(\HC^G(\U)_0)$ have finite length.
\end{Lem}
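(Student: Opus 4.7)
The plan is to transport the perverse $t$-structure via the derived localization equivalence (\ref{eq:HC_derived_specialized}) of Theorem \ref{Thm:derived_loc_HC} / Remark \ref{Rem:derived_loc_HC}, then apply the theory of perverse coherent sheaves and use the duality $\Dual$ to pass from Noetherianity to Artinianity.

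The equivalence (\ref{eq:HC_derived_specialized}) is linear over the center $\F[\g^{(1)*}]$ and so preserves the support of any cohomology object; the support conditions in (\ref{eq:perverse_spec_defn}) therefore translate into analogous conditions on $\Coh^{G^{(1)}}(\St^{(1)})$, identifying $\Perv(\HC^G(\U)_0)$ with the heart of a middle-perversity perverse coherent $t$-structure in the sense of \cite{AB}. Since $p>h$, the nilpotent cone $\Nilp^{(1)}$ has finitely many $G^{(1)}$-orbits, so $\St^{(1)}$ admits a finite $G^{(1)}$-stable stratification; on each stratum the equivariant twisted coherent sheaves that arise --- after passing through the noncommutative Springer resolution via (\ref{eq:bimod_equiv_abelian_specialized}) --- are modules over a finite-dimensional algebra, and so yield only finitely many simple objects per stratum.

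Noetherianity of $\Perv(\HC^G(\U)_0)$ then follows from the IC-extension machinery of \cite[Section 3]{AB}: simples of the heart are intermediate extensions from strata; for any fixed $P$, only finitely many strata intersect its support, and a d\'evissage on cohomological degree combined with Noetherianity of $\Coh^{G^{(1)}}(\St^{(1)})$ forces any ascending chain of sub-perverse objects to stabilize. Artinianity is then immediate: by the very definition (\ref{eq:perverse_spec_defn}) the duality $\Dual$ of Lemma \ref{Lem:dual_basic}(1) interchanges $\,^p\!D^{b,\leqslant 0}(\HC^G(\U)_0)$ and $\,^p\!D^{b,\geqslant 0}(\HC^G(\U)_0)$, hence restricts to a contravariant auto-equivalence of $\Perv(\HC^G(\U)_0)$, so DCC on subobjects of $P$ is equivalent to ACC on subobjects of $\Dual(P)$, which we already have.

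The main obstacle I anticipate is making the IC-extension argument rigorous in our singular setting, since $\St^{(1)}$ is not smooth. A workable approach is to embed $\St^{(1)}$ in $\tilde\g^{(1)}\times\tilde\g^{(1)}$ and run the perverse-coherent analysis on the smooth ambient space, exploiting that $\St^{(1)}$ is a complete intersection; alternatively, one can argue directly in module-theoretic terms via (\ref{eq:bimod_equiv_abelian_specialized}), using that $\Acal_\Ring\otimes_{\F[\g^{(1)*}]}\Acal^{opp}$ has finite homological dimension and its equivariant module category is amenable to stratification by the finitely many $G^{(1)}$-orbits on $\Nilp^{(1)}$.
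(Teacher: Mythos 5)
Your proposal has a genuine gap at its first step. You want to transport the perverse t-structure through the localization equivalence (\ref{eq:HC_derived_specialized}) and claim that, because that equivalence is $\F[\g^{(1)*}]$-linear, the support conditions in (\ref{eq:perverse_spec_defn}) ``translate into analogous conditions on $\Coh^{G^{(1)}}(\St^{(1)})$,'' identifying $\Perv(\HC^G(\U)_0)$ with a middle-perversity heart in the sense of \cite{AB}. But (\ref{eq:HC_derived_specialized}) is a derived localization-type equivalence: it is \emph{not} t-exact for the tautological t-structures, so it does not carry the cohomology objects $H^i(M)$ on one side to cohomology objects on the other, and central linearity only controls supports of whole complexes, not of individual $H^i$. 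The condition $\dim \operatorname{Supp}H^i(M)\leqslant \dim\Nilp-2i$ therefore does not transfer in the way you assert; identifying the transported t-structure with an \cite{AB}-style perverse coherent t-structure on the Steinberg variety is a substantial theorem in its own right (the coherent analogue of \cite[Theorems 54, 55]{B}), not a formality. Downstream of this, your Noetherianity claim is also only sketched: $\HC^G(\U)_0$ being Noetherian does not give ACC on subobjects in the perverse heart, and the ``IC-extension plus d\'evissage'' step is essentially a re-proof of \cite[Corollary 4.13]{AB} in a twisted noncommutative setting that you do not carry out. (Your duality step deducing DCC from ACC is fine once the rest is in place.)

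The paper's proof avoids all of this by going in the opposite, much more elementary direction: it restricts along the central inclusion $\F[\Nilp^{(1)}]\hookrightarrow \U_0$, giving a forgetful functor $D^b(\HC^G(\U)_0)\rightarrow D^b(\F[\Nilp^{(1)}]\operatorname{-mod}^G)$. This functor visibly preserves supports of cohomology objects (so it is compatible with the $\leqslant 0$ half of (\ref{eq:perverse_spec_defn})); its t-exactness for the full perverse t-structures is handled as in \cite[Proposition 6.10]{BL}, and it is faithful on hearts. Finite length then follows immediately from \cite[Corollary 4.13]{AB} applied to $\operatorname{Perv}(\F[\Nilp^{(1)}]\operatorname{-mod}^G)$: a faithful t-exact functor to a finite-length category forces finite length. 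If you want to salvage your approach, the closer route is your final alternative via the abelian (hence t-exact) equivalence (\ref{eq:bimod_equiv_abelian_specialized}), but even there you would still need an \cite{AB}-type finite-length statement for the equivariant $\Acal$-bimodule category, which is not off the shelf.
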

\begin{proof}
Consider the inclusion $\F[\mathcal{N}^{(1)}]\hookrightarrow \U_0$. It gives rise to
the pullback functor $$D^b(\HC^G(\U)_0)\rightarrow D^b(\F[\mathcal{N}^{(1)}]\operatorname{-mod}^G).$$
Similarly to the proof of \cite[Proposition 6.10]{BL}, this functor is t-exact for the perverse
t-structures. Also the restriction to the hearts is faithful. By \cite[Corollary 4.13]{AB},
every object in $\operatorname{Perv}(\F[\mathcal{N}^{(1)}]\operatorname{-mod}^G)$ has finite length.
Since $\Perv(\HC^G(\U)_0)$ admits a faithful t-exact functor to a category where every object has finite length, every object in $\Perv(\HC^G(\U)_0)$ has finite length too.
\end{proof}

Note that $\Perv(\HC^G(\U)_0)$ is preserved by $\Dual$ and so $\Dual$ is a t-exact contravariant
duality functor of  $\Perv(\HC^G(\U)_0)$.

The following is the main result of this section.

\begin{Prop}\label{Prop:perverse_K0}
The following claims are true:
\begin{enumerate}
\item We have a $W^{ea}$-equivariant isomorphism
$\Z W^{ea}\xrightarrow{\sim} K_0(\Perv(\HC^G(\U)_0))$ of abelian groups
that sends $1\in \Z W^{ea}$
to the class of $\U_0$ (where the $W^{ea}$-action on the target comes from
the reflection functors).
\item For every simple object  $L\in \Perv(\HC^G(\U)_0)$ we have
$\Dual L\cong L$.
\end{enumerate}
\end{Prop}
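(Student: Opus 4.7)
The plan is to deduce both claims directly from the results already established, treating $K_0(\Perv(\HC^G(\U)_0))$ as a shadow of $K_0$ of the ambient triangulated category.

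For part (1), I would use the standard fact that for any bounded t-structure on a triangulated category, the inclusion of the heart induces an isomorphism on $K_0$. Applied to the perverse t-structure this gives $K_0(\Perv(\HC^G(\U)_0))\cong K_0(D^b(\HC^G(\U)_0))\cong K_0(\HC^G(\U)_0)$. Combined with Proposition \ref{Prop:HC_K_group}, this produces an isomorphism $\Z W^{ea}\xrightarrow{\sim}K_0(\Perv(\HC^G(\U)_0))$. To check that $1\in \Z W^{ea}$ maps to $[\U_0]$, note that $\U_0=\operatorname{pr}_0(\F\otimes\U_0)$ lies in $\Perv(\HC^G(\U)_0)$ by Example \ref{Ex:perverse}, so its class in $K_0(\Perv)$ coincides with its class in $K_0(\HC^G(\U)_0)$. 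The $W^{ea}$-equivariance is automatic because both $W^{ea}$-actions come from exact autoequivalences of $D^b(\HC^G(\U)_0)$ (left tensor product with $B_s^{HC}$ and $\Delta_x^{HC}$), which descend to the \emph{same} operators on the common $K_0$.

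For part (2), I would combine part (1) with Corollary \ref{Cor:dual_K_0} and the finite length property. By Lemma \ref{Lem:finite_length}, every object of $\Perv(\HC^G(\U)_0)$ has finite length; hence Jordan--H\"older gives that the classes of the simple objects form a $\Z$-basis of $K_0(\Perv(\HC^G(\U)_0))$. The functor $\Dual$ is a t-exact contravariant equivalence of $\Perv(\HC^G(\U)_0)$ with itself and satisfies $\Dual^2\cong\operatorname{id}$ (Lemma \ref{Lem:dual_basic}(1)), so it induces an involution on the set of isomorphism classes of simples, and thus a signed permutation of this basis. However, under the identification $K_0(\Perv(\HC^G(\U)_0))=K_0(\HC^G(\U)_0)$, Corollary \ref{Cor:dual_K_0} says that the induced action of $\Dual$ on $K_0$ is the identity. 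A permutation of a free $\Z$-basis that acts as the identity on the whole group must fix every basis element, so $[\Dual L]=[L]$ for every simple $L$, whence $\Dual L\cong L$ since both are simple.

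Most of the routine work is packaged into the earlier results, so there is no real hard step here; the only thing to be careful about is the bookkeeping of identifications of $K_0$'s and the verification that the three structures we put on $K_0(\Perv(\HC^G(\U)_0))$ (the $W^{ea}$-action via reflection functors, the $\Dual$-action, and the basis of simples) really are the ones arising from the ambient triangulated category. One small point worth recording is that Corollary \ref{Cor:dual_K_0} (as stated for $\HC^G(\U_0)$, but its proof manifestly addresses $\HC^G(\U)_0$) is applied in the form needed for the central-character-zero block, which is the category in the statement.
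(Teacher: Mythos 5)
Your proposal is correct and follows essentially the same route as the paper: identify $K_0$ of the perverse heart with $K_0(\HC^G(\U)_0)$ via boundedness of the t-structure and invoke Proposition \ref{Prop:HC_K_group}, then use finite length, $\Dual^2\cong\operatorname{id}$, and Corollary \ref{Cor:dual_K_0} to conclude that the permutation of simple classes induced by $\Dual$ is trivial. The only hypothesis you leave unverified is that the perverse t-structure is indeed bounded (homologically finite); the paper checks this explicitly from the containment $\,^p\!D^{b,\leqslant 0}\subset D^{b,\leqslant \dim\Nilp/2}$ together with the self-duality of the t-structure.
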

\begin{proof}
We start by proving (1). We observe that the perverse t-structure is homologically finite:
every object in $D^b(\HC^G(\U)_0)$ has only finitely many nonzero perverse cohomology groups. This follows
from the following two observations:  first, $\,^p\!D^{b,\leqslant 0}\subset D^{b,\leqslant \dim \Nilp/2}$
and, second, the perverse t-structure is self-dual. From the homological finiteness we get an
identification
\begin{equation}\label{eq:K_0_ident}
K_0(\Perv(\HC^G(\U)_0)\xrightarrow{\sim} K_0(D^b(\HC^G(\U)_0))(=K_0(\HC^G(\U)_0)).
\end{equation}
Part (1) follows from (\ref{eq:K_0_ident}) and
Proposition \ref{Prop:HC_K_group}.

Now we prove part (2). By (\ref{eq:K_0_ident}) and Corollary \ref{Cor:dual_K_0},
$\Dual$ acts by $1$ on $K_0(\Perv(\HC^G(\U)_0))$.
Now (2) follows from Lemma \ref{Lem:finite_length}.
\end{proof}

\section{Equivalence with Soergel-type categories $\Ocat$}\label{S_Soergel_O}
\subsection{Categories $\Ocat_{\Ring}$ and $\Ocat$}\label{SS_O}
Recall the completed category $\SBim^{\wedge}$. There is a highest weight category
$\Ocat_{\Ring}$ over $\Ring$ with poset $W^{ea}$ (with respect to the Bruhat
order) whose category of tilting objects is identified with $\SBim^\wedge$. We will
follow the construction from \cite[Section 6]{EL} where the case of a Coxeter group
was treated.

In \cite[Section 6.6.7]{EL} the author and Elias introduced the categories $\,_I\mathcal{O}^-_{\mathcal{R},J}(W^a)$ (in the notation of that paper, note that
there we considered the general Weyl group). We will need the case when
$I=J=\varnothing$. The ring $\mathcal{R}$ in this case is the algebra of formal
power series in the affine Cartan with $p$-adic coefficients. So $\Ring$
is an algebra over $\mathcal{R}$.  Set
\begin{equation}\label{eq:Ocat_wedge_def}
\Ocat_{\Ring}(W^a):=(\,_\varnothing
\mathcal{O}^-_{\mathcal{R},\mathcal{\varnothing}}(W^a))^{opp}\otimes_{\mathcal{R}}\Ring.
\end{equation}
More precisely, $\,_\varnothing
\mathcal{O}^-_{\mathcal{R},\varnothing}(W^a)^{opp}$ is the category of modules
with discrete topology over the inverse limit of $\mathcal{R}$-algebras that are
free finitely generated $\mathcal{R}$-modules (see \cite[Section 6.1.3]{EL}).
We then base change this topological
algebra from $\mathcal{R}$ to $\Ring$ and take the category of modules with discrete
topology for $\Ocat_{\Ring}(W^a)$.

The category $\Ocat_{\Ring}(W^a)$ is an ideal finite highest weight category over $\Ring$ in
the sense of \cite[Section 6.1.3]{EL}. Its highest weight poset is $W^a$ with its
Bruhat order. We write $T_{\Ring}(x),\Delta_{\Ring}(x),\nabla_{\Ring}(x)$
for the indecomposable tilting, standard and costandard objects labelled by $x$,
respectively (by definition, $T_{\Ring}(x)$ is the unique indecomposable tilting
that admits an epimorphism onto $\nabla_{\Ring}(x)$).
By the construction, the category $\Ocat_{\Ring}(W^a)\operatorname{-tilt}$
of tilting objects in $\Ocat_{\Ring}(W^a)$ is identified with $\SBim^\wedge(W^a)$ preserving
the labels.

Thanks to \cite[Lemma 6.30]{EL},
one can inductively construct the costandard objects in $\Ocat_{\Ring}(W^a)$
as follows. The costandard object $\nabla_{\Ring}(1)$ is the indecomposable tilting
$T_{\Ring}(1)$. Let $x\in W^a$ and a simple affine reflection $s$
be such that $sx>x$ in the Bruhat order. Assume we have already constructed the object
$\nabla_{\Ring}(x)$. Let $\Theta_s$ denote the reflection endo-functor
of $\Ocat_{\Ring}$ given by $B_s\in \SBim^\wedge$,  it has a distinguished morphism
from the identity endofunctor. Then thanks to (2) of \cite[Lemma 6.30]{EL} we see that
$\Theta_s\nabla_{\Ring}(x)$ admits a filtration by $\nabla_{\Ring}(x)$ and
$\nabla_{\Ring}(sx)$.
Since $sx>x$ in the highest weight order, it follows $\nabla_{\Ring}(sx)$
is constructed as the cokernel of  $\nabla_{\Ring}(x)\rightarrow
\Theta_s \nabla_{\Ring}(x)$. In particular, this cokernel is
flat over $\Ring$.

The following lemma describes the standard objects in $\Ocat_{\Ring}(W^a)$.
Note that there is a functor morphism $\Theta_s\rightarrow \operatorname{id}$.

\begin{Lem}\label{Lem:Ocat_standards}
We have $\Delta_{\Ring}(1)=T_{\Ring}(1)$. Moreover, if $x\in W^a$
and a simple affine reflection $s$ are such that $sx>x$, then
$\Theta_s\Delta_{\Ring}(x)\rightarrow \Delta_{\Ring}(x)$ is an epimorphism and
$\Delta_{\Ring}(sx)$ is its kernel.
\end{Lem}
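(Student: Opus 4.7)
The plan is to mimic, in dual form, the inductive construction of costandards sketched immediately before the lemma. The key observation is that the reflection functor $\Theta_s$ comes from tensoring with $B_s \in \SBim^{\wedge}$, which is self-dual in the Soergel category; consequently $\Theta_s$ is biadjoint to itself, and the natural transformation $\Theta_s \to \operatorname{id}$ used in the lemma statement is the counit of the adjunction $\Theta_s \dashv \operatorname{id}$, matching the unit $\operatorname{id} \to \Theta_s$ used on the costandard side.

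For the base case $\Delta_{\Ring}(1) = T_{\Ring}(1)$, note that $1$ is minimal in the Bruhat order, so in the ideal finite highest weight category $\Ocat_{\Ring}(W^a)$ the standard $\Delta_{\Ring}(1)$, the costandard $\nabla_{\Ring}(1) = T_{\Ring}(1)$, and the simple $L_{\Ring}(1)$ all coincide (they have a one-element support in the poset). For the inductive step, assume $\Delta_{\Ring}(x)$ has been constructed and $sx > x$. The plan is to establish the dual of \cite[Lemma 6.30(2)]{EL}, namely that $\Theta_s \Delta_{\Ring}(x)$ carries a two-step $\Delta$-filtration whose top section is $\Delta_{\Ring}(x)$ and whose subobject section is $\Delta_{\Ring}(sx)$, with the counit map $\Theta_s \Delta_{\Ring}(x) \to \Delta_{\Ring}(x)$ realizing this quotient. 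Granting this, the epimorphism and kernel claims follow tautologically, and since $\Theta_s$ preserves flatness over $\Ring$ and the quotient $\Delta_{\Ring}(x)$ is flat by induction, the kernel is flat as required.

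The key technical input will be biadjointness of $\Theta_s$, which lets us compute Hom-spaces and multiplicities:
\begin{equation*}
\Hom_{\Ocat_{\Ring}}(\Theta_s \Delta_{\Ring}(x), \nabla_{\Ring}(y)) \;\cong\; \Hom_{\Ocat_{\Ring}}(\Delta_{\Ring}(x), \Theta_s \nabla_{\Ring}(y)).
\end{equation*}
Using the already-established filtration of $\Theta_s \nabla_{\Ring}(y)$ (for both $sy > y$ and $sy < y$), together with the standard identity $\Hom(\Delta_{\Ring}(x), \nabla_{\Ring}(z)) = \delta_{x,z}\Ring$, one reads off that $\Theta_s \Delta_{\Ring}(x)$ is $\Delta$-filtered with exactly the sections $\Delta_{\Ring}(x)$ and $\Delta_{\Ring}(sx)$. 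The ordering of the filtration is determined by the highest weight structure: $sx > x$ forces the sub to be $\Delta_{\Ring}(sx)$ and the top to be $\Delta_{\Ring}(x)$. The counit $\Theta_s \Delta_{\Ring}(x) \to \Delta_{\Ring}(x)$ is then identified with the projection onto the top section by a brief Hom computation, e.g.\ by showing it is nonzero modulo the radical after applying the functor to the unique quotient $\Delta_{\Ring}(x) \twoheadrightarrow L_{\Ring}(x)$.

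The main obstacle I anticipate is the last step: pinning down the kernel as literally $\Delta_{\Ring}(sx)$ rather than merely a module with the correct $\Delta$-filtration multiplicities. I would address this either by invoking biadjointness more carefully to identify the filtration canonically (extracting it from the corresponding canonical filtration on $\Theta_s \nabla_{\Ring}(x)$ via Ringel duality or contravariant $\Ocat_{\Ring}$-duality), or by verifying directly that the natural short exact sequence $0 \to K \to \Theta_s \Delta_{\Ring}(x) \to \Delta_{\Ring}(x) \to 0$ produced by the counit has $K \in \Ocat_{\Ring}(W^a)^{\leq sx}$ with $\Hom(K, \nabla_{\Ring}(sx)) = \Ring$, which together with the highest weight structure forces $K \cong \Delta_{\Ring}(sx)$.
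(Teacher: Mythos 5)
Your proposal is correct and follows essentially the same route as the paper: the paper also deduces that $\Theta_s$ preserves $\Delta$-filtered objects from its self-biadjointness and the costandard case, computes the $\Delta$-multiplicities of $\Theta_s\Delta_{\Ring}(x)$ via the $\Hom$-pairing with costandards (your adjunction computation is exactly this pairing phrased $K_0$-equivariantly), and then uses $sx>x$ in the highest weight order to force $\Delta_{\Ring}(sx)$ to be the sub and $\Delta_{\Ring}(x)$ the quotient. The only quibble is terminological: the map $\Theta_s\to\operatorname{id}$ is not the counit of an adjunction $\Theta_s\dashv\operatorname{id}$ but the morphism induced by the bimodule map $B_s\to\Delta_1$ dual to the one giving $\operatorname{id}\to\Theta_s$; this does not affect the argument.
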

\begin{proof}
The claim that $\Delta_{\Ring}(1)=T_{\Ring}(1)$ is standard.

Since $\Theta_s$ is a functor that is isomorphic to its left adjoint and preserves the subcategory
$\Ocat_{\Ring}(W^a)^\nabla$ of costandardly filtered objects, it also preserves the
subcategory  $\Ocat_{\Ring}(W^a)^\Delta$ of standardly filtered objects.
Moreover there is a perfect pairing of Grothendieck groups of exact categories
$K_0(\Ocat_{\Ring}(W^a)^\Delta)\times K_0(\Ocat_{\Ring}(W^a)^\nabla)
\rightarrow \Z$ that sends a pair of objects $(M,N)$ to the rank of
$\Hom(M,N)$. By the construction of the costandard objects recalled above in this section,
$K_0(\Ocat_{\Ring}(W^a)^\nabla)$ is identified with $\Z W^a$. The Hom pairing identifies
$K_0(\Ocat_{\Ring}(W^a)^\Delta)$ with the restricted dual of $\Z W^a$ (all functions $f$
such that $f(x)\neq 0$ only for finitely many elements $x\in W^a$). This module is naturally
identified with $\Z W^a$. Recall that $\Theta_s$ is isomorphic to its left adjoint.
It follows that the identification  $K_0(\Ocat_{\Ring}(W^a)^\Delta)\xrightarrow{\sim}
\Z W^a$ is $W^a$-equivariant, so $\Delta_{\Ring}(x)$ corresponds to $x$
for all $x$. So $\Theta_s \Delta_{\Ring}(x)$ is filtered by $\Delta_{\Ring}(x)$
and $\Delta_{\Ring}(sx)$. Since $sx>x$ in the highest weight order,
we see that
$\Theta_s\Delta_{\Ring}(x)\rightarrow \Delta_{\Ring}(x)$ is an epimorphism and
$\Delta_{\Ring}(sx)$ is its kernel.
\end{proof}

We will need another lemma.

\begin{Lem}\label{Cor:exactness_Theta}
The following two claims hold:
\begin{enumerate}
\item We have an equivalence $K^b(\SBim^\wedge(W^a))\xrightarrow{\sim} D^b(\Ocat_{\Ring}(W^a))$.
\item $\SBim^{\wedge}(W^a)$ acts on $K^b(\SBim^\wedge(W^a))$ by t-exact functors.
\end{enumerate}
\end{Lem}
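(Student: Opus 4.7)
The plan is the following. Part (1) is a Ringel-type equivalence: for a highest weight category with enough tilting objects, vanishing of $\Ext^{>0}$ between tiltings together with the existence of finite tilting resolutions of every object yields an equivalence from the bounded homotopy category of tiltings to the bounded derived category of the ambient highest weight category. For part (2), the monoidal category $\SBim^\wedge(W^a)$ is generated under tensor products and direct summands by the Bott-Samelson objects $B_s$, whose action on $\Ocat_\Ring(W^a)$ is the reflection functor $\Theta_s$; this endofunctor was already seen in the discussion preceding Lemma \ref{Lem:Ocat_standards} to preserve the abelian category $\Ocat_\Ring(W^a)$.

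For (1), I would first verify that $\Ext^{>0}_{\Ocat_\Ring(W^a)}(T_1,T_2)=0$ for all tilting $T_1,T_2$: each $T_i$ is both standardly and costandardly filtered, and $\Ext^{>0}(\Delta_\Ring(x),\nabla_\Ring(y))=0$ by the definition of a highest weight category. Next, I would produce finite tilting resolutions of the standard objects by induction on the Bruhat order on $W^a$, with base case $\Delta_\Ring(1)=T_\Ring(1)$: the short exact sequence
$$0 \to \Delta_\Ring(sx) \to \Theta_s \Delta_\Ring(x) \to \Delta_\Ring(x) \to 0$$
from Lemma \ref{Lem:Ocat_standards}, combined with the fact that $\Theta_s$ preserves tiltings, upgrades a finite tilting resolution of $\Delta_\Ring(x)$ into one of $\Delta_\Ring(sx)$. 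By the ideal finite structure of $\Ocat_\Ring(W^a)$, an arbitrary object is supported on a finite order ideal and is built by finitely many extensions of standards, hence inherits a finite tilting resolution. Combining $\Ext^{>0}$-vanishing and the existence of finite tilting resolutions then produces the equivalence $K^b(\SBim^\wedge(W^a)) \xrightarrow{\sim} D^b(\Ocat_\Ring(W^a))$ by the standard argument (e.g.\ along the lines of \cite[Section 6]{EL}).

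For (2), any Bott-Samelson product $B_{s_1}\otimes\cdots\otimes B_{s_k}$ acts on $\Ocat_\Ring(W^a)$ via the composition $\Theta_{s_1}\circ\cdots\circ \Theta_{s_k}$ of exact endofunctors of the abelian category $\Ocat_\Ring(W^a)$, hence is exact on $\Ocat_\Ring(W^a)$. An arbitrary object of $\SBim^\wedge(W^a)$ is, after passing to a summand of a direct sum of such Bott-Samelson products, cut out by an idempotent endomorphism of such a product. Since $\Ocat_\Ring(W^a)$ is Karoubian and the action is additive, this idempotent gives rise to a well-defined summand of the exact endofunctor $\Theta_{s_1}\circ\cdots\circ \Theta_{s_k}$, which is itself exact. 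Transporting this along the equivalence of (1) shows that the natural left action of $\SBim^\wedge(W^a)$ on $K^b(\SBim^\wedge(W^a))$ by tensor product is by t-exact endofunctors with respect to the default t-structure coming from $\Ocat_\Ring(W^a)$.

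The main obstacle will be the compatibility check needed to transfer (2) across (1), namely, verifying that the tautological left action of $\SBim^\wedge(W^a)$ on $K^b(\SBim^\wedge(W^a))$ by tensor product corresponds, under the equivalence of (1), to the action on $D^b(\Ocat_\Ring(W^a))$ induced by the endofunctors $\Theta_s$ and their compositions. This ultimately reduces to the identification of $\Theta_s$ on $\Ocat_\Ring(W^a)$ with the tensor action of $B_s\in \SBim^\wedge(W^a)$, which is built into the construction of $\Ocat_\Ring(W^a)$ following \cite[Section 6]{EL} via the tilting identification $\SBim^\wedge(W^a)\simeq \Ocat_\Ring(W^a)\operatorname{-tilt}$.
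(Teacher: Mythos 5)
Your part (1) is essentially the paper's argument: vanishing of higher $\Ext$'s between tiltings gives the full embedding $K^b(\Ocat_{\Ring}(W^a)\operatorname{-tilt})\hookrightarrow D^b(\Ocat_{\Ring}(W^a))$, and finite resolutions by standardly filtered objects, together with the fact that every standardly filtered object lies in $K^b(\Ocat_{\Ring}(W^a)\operatorname{-tilt})$, give essential surjectivity; your induction on the Bruhat order using the short exact sequence of Lemma \ref{Lem:Ocat_standards} just makes that last step explicit.

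Part (2) as written has a circularity. In this paper the action of $\SBim^\wedge(W^a)$ on $\Ocat_{\Ring}(W^a)$ is produced from the identification $\SBim^\wedge(W^a)\cong \Ocat_{\Ring}(W^a)\operatorname{-tilt}$, so a priori it is only an action on $K^b(\Ocat_{\Ring}(W^a)\operatorname{-tilt})\cong D^b(\Ocat_{\Ring}(W^a))$; the assertion that $\Theta_s$ is an exact endofunctor of the abelian heart is precisely the t-exactness you are asked to prove. The discussion preceding Lemma \ref{Lem:Ocat_standards} does not establish it: it only shows that $\Theta_s\nabla_{\Ring}(x)$ lies in the heart and is costandardly filtered, which says nothing about $\Theta_s M$ for a general object $M$ of the heart. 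So your step ``compositions of exact endofunctors of the abelian category'' starts from an unproved premise, unless you go back to the construction in \cite[Section 6]{EL}, where $\Theta_s$ is tensoring with a bimodule that is projective on both sides — but then that appeal must be made explicit. The paper closes the gap differently and more economically: by Lemma \ref{Lem:Ocat_standards} the action of $\SBim^\wedge(W^a)$ preserves the exact subcategory of standardly filtered objects (for a Bott--Samelson product this is the composition of the statements for each $\Theta_s$, and a direct summand of a standardly filtered object is standardly filtered), so since every object admits a finite left resolution by standardly filtered objects the action is right t-exact; and since the collection of functors coming from $\SBim^\wedge(W^a)$ is closed under passing to adjoints, the action is also left t-exact. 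Your treatment of idempotent summands and of the compatibility between the tensor action on $K^b(\SBim^\wedge(W^a))$ and the action on $D^b(\Ocat_{\Ring}(W^a))$ is fine once t-exactness of the generators is in place, but you should either supply the missing exactness of $\Theta_s$ from the construction or switch to the standardly-filtered/adjoint argument.
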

\begin{proof}
(1) is pretty standard. Recall that $\SBim^\wedge(W^a)\xrightarrow{\sim}
\Ocat_{\Ring}(W^a)\operatorname{-tilt}$. We have a full embedding  $K^b(\Ocat_{\Ring}(W^a)\operatorname{-tilt})\hookrightarrow D^b(\Ocat_{\Ring}(W^a))$ because there are no higher self extensions between tiltings.
Every object in $\Ocat_{\Ring}(W^a)$ admits a finite resolution by standardly filtered objects (from the left).
Every standardly filtered object is in $K^b(\Ocat_{\Ring}(W^a)\operatorname{-tilt})$.
So $K^b(\Ocat_{\Ring}(W^a)\operatorname{-tilt})\hookrightarrow D^b(\Ocat_{\Ring}(W^a))$
is essentially surjective. (1) follows.

Now we prove (2). By Lemma \ref{Lem:Ocat_standards}, the action of $\SBim^\wedge(W^a)$
preserves the subcategory of standardly filtered objects. Since every object in
$\Ocat_{\Ring}(W^a)$ admits a resolution by standardly filtered objects, we see that
$\SBim^{\wedge}(W^a)$ acts by right t-exact functors. But the functors coming from
$\SBim^\wedge(W^a)$ are closed under taking adjoints, so they are also left t-exact.
This finishes the proof of (2).
\end{proof}

Now we explain how to define $\Ocat_{\Ring}$ for the group $W^{ea}$. For this note that
$$\Hom_{\SBim^\wedge}(B_x^{AS}, B_y^{AS})=0\text{ if }xy^{-1}\not \in W^a.$$ So $\SBim^\wedge$ splits
into the direct sum of subcategories indexed by $\Lambda/\Lambda_r=W^{ea}/W^a$. The objects
$\Delta_x^{AS}$ for $x\in \Lambda/\Lambda_r$ are invertible. Moreover, we have the group homomorphism from
$\Lambda/\Lambda_r$ to the group of (isomorphism classes of) invertible objects
in $\SBim^{\wedge}$ given by $x\mapsto \Delta_x^{AS}$.

We set
\begin{equation}\label{eq:O_decomp}\Ocat_{\Ring}:=\bigoplus_{x\in \Lambda/\Lambda_r}\Ocat_{\Ring}(W^a).
\end{equation}
This decomposition gives rise to an identification $\Ocat_{\Ring}\operatorname{-tilt}
\xrightarrow{\sim}\SBim^{\wedge}$. This gives rise an action of $\SBim^\wedge$ on
$\Ocat_{\Ring}$. The objects $\Delta_x^{AS}$ for $x\in \Lambda/\Lambda_r$ act by
category equivalence that permute the summands in (\ref{eq:O_decomp}). The corresponding functor
sends $T_{\Ring}(y),\nabla_{\Ring}(y),\Delta_{\Ring}(y)$ to
$T_{\Ring}(xy),\nabla_{\Ring}(xy),\Delta_{\Ring}(xy)$, respectively. The action of $B_s^{AS}$
(to be denoted by $\Theta_s$) on the costandard and standard objects are as described above.
The direct analog of Lemma \ref{Cor:exactness_Theta} holds.

The category $\Ocat_{\Ring}$ is $\Ring$-linear. So we can consider
its specialization
\begin{equation}\label{eq:Ocat_def}
\Ocat:=\Ocat_{\Ring}\otimes_{\Ring}\F
\end{equation}
This is an
$\F$-linear highest weight category with the same poset $W^{ea}$. We write $$T(x),\Delta(x),\nabla(x),L(x)$$
for the indecomposable tilting, standard, costandard and simple objects labelled by $x\in W^{ea}$.
Note that $\SBim^\wedge$ still acts on $\Ocat$ by exact functors.

\subsection{Duality for categories $\Ocat$}
The goal of this section is to define the  duality functors  for $\Ocat_{\Ring},\Ocat$. We start with the duality
functor for $\SBim^\wedge(=\Ocat_{\Ring}\operatorname{-tilt})$. Recall
the duality functor $\Dual_{\Ring}$ of $D^b(\U^{\wedge_0}\operatorname{-mod})$,
Remark \ref{Rem:deformed_duality}. Note that it preserves $\Hilt^G(\U^{\wedge_0})$,
see (2) of Lemma \ref{Lem:dual_basic} and Remark \ref{Rem:deformed_duality}.
Also recall that $\SBim^\wedge$ fully embeds into $\Hilt^G(\U^{\wedge_0})$,
Proposition \ref{Prop:BR}.

\begin{Lem}\label{Lem:dual_SBim}
The image of $\SBim^{\wedge}$ in $\operatorname{Hilt}^G(\U^{\wedge_0})$
is closed under $\Dual_\Ring$. Moreover, the images of the indecomposable objects $B_x^{AS}$ in
$\operatorname{Hilt}^G(\U^{\wedge_0})$ are self-dual for all $x\in W^{ea}$.
\end{Lem}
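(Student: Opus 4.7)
My plan is to split the lemma into two parts: closure of the image of $\SBim^\wedge$ under $\Dual_\Ring$, and self-duality of each indecomposable $B_x^{AS}$. For the first part I will use that the image of $\SBim^\wedge$ in $\Hilt^G(\U^{\wedge_0})$ is generated as an additive Karoubian monoidal subcategory by the objects $B_s^{HC}$ (for $s$ a simple affine reflection) together with $\Delta_x^{HC}$, $\Delta_{x^{-1}}^{HC}$ for $x\in\Lambda/\Lambda_r$, and that $\Dual_\Ring$ is a contravariant auto-equivalence that reverses the order of $\otimes_{\U^{\wedge_0}}$. It therefore suffices to show each generator is self-dual. For $\Delta_x^{HC}$ I apply the identity $T_x\circ \Dual_\Ring\cong \Dual_\Ring\circ T_{x^{-1}}^{-1}$ (Lemma~\ref{Lem:dual_basic}(3) in its deformed form per Remark~\ref{Rem:deformed_duality}) to the unit $\U^{\wedge_0}$: since $x$ has length zero both $T_x$ and $T_{x^{-1}}^{-1}$ act as $\Delta_x^{HC}\otimes_{\U^{\wedge_0}}(-)$, and combined with the immediate $\Dual_\Ring(\U^{\wedge_0})\cong\U^{\wedge_0}$ (from $R\Hom_{\U^{\wedge_0}}(\U^{\wedge_0},\U^{\wedge_0})=\U^{\wedge_0}$ together with the fact that the $\theta$-twist preserves $\U^{\wedge_0}$ as a bimodule), this yields $\Dual_\Ring(\Delta_x^{HC})\cong\Delta_x^{HC}$.

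For $B_s^{HC}$ I will exploit that the reflection functor $\Theta_s=B_s^{HC}\otimes_{\U^{\wedge_0}}(-)$ is biadjoint to itself, a classical property that holds in our modular setting via the natural pairing between the wall-translation bimodules $\mathsf{P}^{0,\mu_s}$ and $\mathsf{P}^{\mu_s,0}$. By tensor-Hom adjunction this self-biadjunction amounts to a natural bimodule isomorphism $R\Hom_{\U^{\wedge_0}}(B_s^{HC},\U^{\wedge_0})\cong B_s^{HC}$; twisting by the Chevalley involution $\theta$, which is compatible with the bimodule structure since it permutes the Chevalley generators and fixes $\mu_s$ up to the appropriate sign convention, then gives $\Dual_\Ring(B_s^{HC})\cong B_s^{HC}$.

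For the second part, once the first is established, $\Dual_\Ring$ restricts to a contravariant auto-equivalence of the image of $\SBim^\wedge$, so it permutes the indecomposable objects: $\Dual_\Ring(B_x^{AS})\cong B_{\sigma(x)}^{AS}$ for an involution $\sigma$ of $W^{ea}$. To show $\sigma=\operatorname{id}$ I will specialize along $\bullet\otimes_\Ring\F$: by Corollary~\ref{Cor:flat_hom}(2) the objects $B_x^{HC}\otimes_\Ring\F$ are indecomposable and pairwise nonisomorphic in $\Hilt^G(\U)_0$, and since $\Dual_\Ring$ commutes with specialization on bimodules flat over $\Ring$, Corollary~\ref{Cor:dual_K_0} gives $[B_{\sigma(x)}^{HC}\otimes\F]=[B_x^{HC}\otimes\F]$ in $K_0(\HC^G(\U)_0)\cong\Z W^{ea}$. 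Combining this with the linear independence of $\{[B_x^{HC}\otimes\F]\}_{x\in W^{ea}}$, obtained from a unitriangular expansion in the basis $\{[\Delta_x^{HC}\otimes\F]\}_{x\in W^{ea}}$ furnished by Proposition~\ref{Prop:HC_K_group}, forces $\sigma(x)=x$. The main obstacle will be the self-duality of $B_s^{HC}$: the biadjunction of $\Theta_s$ yields a right-module isomorphism $R\Hom_{\U^{\wedge_0}}(B_s^{HC},\U^{\wedge_0})\cong B_s^{HC}$, but upgrading this to a bimodule isomorphism compatible with $\Dual_\Ring$ requires a careful bookkeeping of the $\theta$-twist against the $G$-equivariant pairing $\mathsf{P}^{0,\mu_s}\otimes_{\U}\mathsf{P}^{\mu_s,0}\to\U^{\wedge_0}$.
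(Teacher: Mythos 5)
Your reduction of the closure statement to the self-duality of the generators $B_s^{HC}$ and $\Delta_x^{HC}$ is the same starting point as the paper's, and your treatment of $\Delta_x^{HC}$ via Lemma \ref{Lem:dual_basic}(3) applied to the unit is fine. But the step you yourself flag as ``the main obstacle'' --- self-duality of $B_s^{HC}$ --- is exactly the content of the lemma, and your route through biadjointness of $\Theta_s$ leaves it open: biadjointness gives a bimodule isomorphism $\Hom_{\U^{\wedge_0}}(B_s^{HC},\U^{\wedge_0})\cong B_s^{HC}$, but $\Dual_\Ring$ is this transpose composed with the $\theta$-twist, and you do not supply the comparison. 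The paper closes this with Lemma \ref{Lem:dual_basic}(2): $\Dual(V\otimes M)\cong V^\vee\otimes\Dual(M)$ with $V^\vee={}^\theta V^*$, which already has the $\theta$-twist built in; since $\Theta_s$ and $\Delta_x^{HC}\otimes\bullet$ are (projections of) compositions of functors $V\otimes\bullet$ with $V^\vee\cong V$, the duality commutes with them, and self-duality of every Bott--Samelson image follows from $\Dual_\Ring(\U^{\wedge_0})\cong\U^{\wedge_0}$. You should invoke this rather than the biadjointness. A second, related error: $\Dual_\Ring$ does \emph{not} reverse the order of $\otimes_{\U^{\wedge_0}}$. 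The transpose $\Hom_{-\U^{\wedge_0}}(\bullet,\U^{\wedge_0})$ reverses the order, but the $\theta$-twist swaps the two sides back, so the composite preserves it (this is also forced by consistency: a duality that reversed order and acted by the identity on $K_0$, as in Corollary \ref{Cor:dual_K_0}, would give $(1+s)(1+t)=(1+t)(1+s)$ in $\Z W^{ea}$, which is false). For the closure claim this slip is harmless, but it would derail the paper's Bruhat induction, where order reversal would yield $\Dual_\Ring(B_w^{AS})\cong B_{w^{-1}}^{AS}$.

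Your second part is a genuine and correct alternative to the paper's. The paper proves self-duality of each $B_x^{AS}$ by induction on the Bruhat order, using that $B_w^{AS}$ occurs with multiplicity one in the self-dual Bott--Samelson for a reduced word and that $\Hilt^G(\U^{\wedge_0})$ is Krull--Schmidt. You instead note that $\Dual_\Ring$ permutes the indecomposables, specialize to $\F$, and combine Corollary \ref{Cor:dual_K_0} with the unitriangularity of the classes $[B_x^{HC}]$ in $\Z W^{ea}\cong K_0(\HC^G(\U)_0)$ (note that the relevant basis is the standard basis $\{x\}$ of the group algebra coming from Proposition \ref{Prop:HC_K_group}, not classes of objects $\Delta_x^{HC}$, which are only defined for $x$ of length zero). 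Both arguments work once the first part is in place; the paper's is more self-contained, while yours trades the Krull--Schmidt induction for the $K_0$ computation. The only genuine gap is therefore the self-duality of $B_s^{HC}$.
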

\begin{proof}
The image of $B_1$ is $\U^{\wedge_0}$, which is self-dual. Also recall,
Remark \ref{Rem:deformed_duality}, that $\Dual_{\Ring}$ intertwines
the endo-functors of $D^b(\HC^G(\U^{\wedge_0}))$ of the form $\pr_0(V\otimes\bullet)$
where $V$ is a self-dual representation of $G$ (e.g. simple or tilting).
The functors $\Theta_s$ and tensoring with $\Delta^{HC}_x$ are of this form.
It follows that the image of every Bott-Samelson
bimodule $\mathsf{BS}^{AS}_{\underline{w}}$ (associated to a word $\underline{w}=(x,s_{i_1},\ldots,s_{i_k})$) in $\operatorname{Hilt}^G(\U^{\wedge_0})$ is self-dual.
Recall that if $\underline{w}$ is a reduced expression for $w$, then
$B^{AS}_w$ occurs in $\mathsf{BS}^{AS}_{\underline{w}}$ with multiplicity $1$
and all other $B^{AS}_y$ that occur in $\mathsf{BS}^{AS}_{\underline{w}}$ have
$y<w$. Now the claim that the image of $B^{AS}_w$ is self-dual is proved
by induction on $w$ with respect to the Bruhat order (using the observation that
$\Hilt^G(\U^{\wedge_0})$ is a Krull-Schmidt category). In particular, the
image of $\SBim^\wedge$ is self-dual.
\end{proof}

So we get a contravariant $\Ring$-linear self-equivalence
of $\Ocat_{\Ring}\operatorname{-tilt}$ that we again denote
by $\Dual_{\Ring}$. The functor extends to $K^b(\Ocat_{\Ring}\operatorname{-tilt})$ which, by Lemma
\ref{Cor:exactness_Theta}, is identified with $D^b(\Ocat_{\Ring})$.

\begin{Lem}\label{Cor:dual_stand_costand}
The self-equivalence $\Dual_{\Ring}$ of $K^b(\Ocat_{\Ring}\operatorname{-tilt})$
has the following properties.
\begin{enumerate}
\item We have $\Dual_{\Ring}(T_{\Ring}(x))\cong T_{\Ring}(x)$ for all
$x\in W^a$.
\item We have $\Dual_{\Ring}\circ \Theta_s\cong \Theta_s\circ \Dual_{\Ring}$
and $\Dual_{\Ring}(\Delta^{AS}_x\otimes\bullet)\cong \Delta^{AS}_x\otimes \Dual_\Ring(\bullet)$
for all $x\in \Lambda/\Lambda_r$.
\item We have $\Dual_{\Ring}(\nabla_{\Ring}(x))\cong\Delta_{\Ring}(x)$.
\end{enumerate}
\end{Lem}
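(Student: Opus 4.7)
Part (1) is immediate from Lemma \ref{Lem:dual_SBim}: under the identification $\Ocat_{\Ring}\operatorname{-tilt}\cong \SBim^{\wedge}$ the indecomposable tilting $T_{\Ring}(x)$ corresponds to $B_x^{AS}$, which that lemma has just exhibited as self-dual.

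For (2), the plan is to transport both equalities into the ambient category $\HC^G(\U^{\wedge_0})$. By construction $\Dual_{\Ring}$ on $\Ocat_{\Ring}\operatorname{-tilt}\cong \SBim^\wedge$ is the restriction of the ambient duality on $\HC^G(\U^{\wedge_0})$, and via Proposition \ref{Prop:BR} the functors $\Theta_s$ and $\Delta_x^{AS}\otimes\bullet$ correspond on tiltings to left tensor with $B_s^{HC}$ and $\Delta_x^{HC}$ respectively. For the $\Delta_x^{HC}$ statement I would apply Lemma \ref{Lem:dual_basic}(3) to the length-zero element $x\in \Lambda/\Lambda_r$: since $x^{-1}$ is also of length $0$, we have $T_{x^{-1}}^{-1}=T_x$, and the formula $T_x\circ\Dual_{\Ring}\cong \Dual_{\Ring}\circ T_{x^{-1}}^{-1}$ collapses to commutation. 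For the $\Theta_s$ statement I would use the presentation $B_s^{HC}=\mathsf{P}^{0,\mu_s}\widehat{\otimes}_{\U}\mathsf{P}^{\mu_s,0}$, where each factor arises from $V\otimes \U^{\wedge_0}$ by central character projection with $V$ a self-dual finite dimensional $G$-module. The projectors in the decomposition (\ref{eq:category_decomposition}) commute with $\Dual_{\Ring}$ tautologically (the decomposition is by central-character data and $\Dual_{\Ring}$ is $\Ring$-bilinear), and tensoring with a self-dual $V$ commutes with $\Dual_{\Ring}$ by Lemma \ref{Lem:dual_basic}(2); composing these gives commutation of $\Dual_{\Ring}$ and $\Theta_s$ on $\HC^G(\U^{\wedge_0})$. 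The resulting isomorphisms on $\SBim^\wedge$ extend to $K^b(\Ocat_{\Ring}\operatorname{-tilt})\cong D^b(\Ocat_{\Ring})$ via Lemma \ref{Cor:exactness_Theta}.

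For (3) I plan to induct on the Bruhat length of $x\in W^{ea}$. The base case $x=1$ reads $\Dual_{\Ring}\nabla_{\Ring}(1)=\Dual_{\Ring}T_{\Ring}(1)=T_{\Ring}(1)=\Delta_{\Ring}(1)$ by (1) and the identifications recalled in Section \ref{SS_O}. For the inductive step with $sx>x$ I would dualize the defining short exact sequence
\[
0\to \nabla_{\Ring}(x)\to \Theta_s\nabla_{\Ring}(x)\to \nabla_{\Ring}(sx)\to 0,
\]
use the $\Theta_s$-commutation from (2) and the induction hypothesis to obtain
\[
0\to \Dual_{\Ring}\nabla_{\Ring}(sx)\to \Theta_s\Delta_{\Ring}(x)\to \Delta_{\Ring}(x)\to 0,
\]
and compare with the construction of $\Delta_{\Ring}(sx)$ in Lemma \ref{Lem:Ocat_standards} to conclude $\Dual_{\Ring}\nabla_{\Ring}(sx)\cong \Delta_{\Ring}(sx)$. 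To extend from $W^a$ to $W^{ea}$ I would twist by $\Delta_y^{AS}$ for $y\in \Lambda/\Lambda_r$, using (2) together with the fact (Section \ref{SS_O}) that this twist sends $\nabla_{\Ring}(x)\mapsto \nabla_{\Ring}(yx)$ and $\Delta_{\Ring}(x)\mapsto \Delta_{\Ring}(yx)$. The step I expect to be the main obstacle is identifying the map $\Theta_s\Delta_{\Ring}(x)\to \Delta_{\Ring}(x)$ produced by dualizing the unit $\operatorname{Id}\to\Theta_s$ with the canonical counit featured in Lemma \ref{Lem:Ocat_standards}. Both maps live in a Hom space that is, via Lemma \ref{Lem:SBim_completed}(1) and the corresponding graded computation in $\SBim$, free of rank one over $\Ring$ in the relevant degree, so two nonzero such maps differ only by a unit; this is what will force the kernel descriptions to coincide and close the induction.
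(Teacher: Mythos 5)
Your proposal is correct and follows essentially the same route as the paper: (1) from Lemma \ref{Lem:dual_SBim}, (2) from the fact that $\Theta_s$ and $\Delta_x^{HC}\otimes\bullet$ are of the form $\pr_0(V\otimes\bullet)$ for self-dual $V$ (the paper's proof of Lemma \ref{Lem:dual_SBim} makes exactly this point), and (3) by dualizing the costandard recursion and matching it against Lemma \ref{Lem:Ocat_standards}. One phrasing fix for the final step: over $\Ring$ two \emph{nonzero} elements of a free rank-one module need not differ by a unit — you should say that both maps are \emph{generators} of $\Hom_{\Ocat_\Ring}(\Theta_s\Delta_\Ring(x),\Delta_\Ring(x))\cong\Ring$ (the dualized map is one because $\Dual_\Ring$ carries a generator of the rank-one module $\Hom(\nabla_\Ring(x),\Theta_s\nabla_\Ring(x))$ to a generator), which is how the paper phrases it.
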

\begin{proof}
(1) follows directly from Lemma \ref{Lem:dual_SBim}. (2) follows from the construction
of $\Dual_{\Ring}$, compare to the proof of Lemma \ref{Lem:dual_SBim}. We prove (3)
by induction on $x$ with respect to the Bruhat order. The base, $x\in \Lambda/\Lambda_r$, follows from
(1). To establish the induction step, note that
\begin{equation}\label{eq:rk1_Hom}\Hom_{\Ocat_{\Ring}}(\nabla_\Ring(x),\Theta_s \nabla_{\Ring}(x))\xrightarrow{\sim}
\Ring.
\end{equation}
For an invertible element  $\varphi\in \Ring$ viewed as a homomorphism, $\varphi$ is a monomorphism and
the cokernel of $\varphi$ is $\nabla_{\Ring}(sx)$, this follows from the reminder
in Section \ref{SS_O}. The functor $\Dual$ sends
this cokernel to the kernel of an arbitrary generator of the $\Ring$-module
$\Hom_{\Ocat_{\Ring}}(\Theta_s\Delta_{\Ring}(x), \Delta_{\Ring}(x))$.
The latter coincides with $\Delta_{\Ring}(sx)$, see
Lemma \ref{Lem:Ocat_standards}. This finishes the proof of (3).
\end{proof}

We proceed to  the duality functor for the category $\Ocat$.
Note that since we have an $\Ring$-linear
isomorphism $\Dual_{\Ring}(T_{\Ring}(x))\cong T_{\Ring}(x)$,
we get $\Dual_{\Ring}(T(x))\cong T(x)[-\dim \h]$. So we have
a duality functor $\Dual:=\Dual_{\Ring}[\dim \h]$ of $\Ocat\operatorname{-tilt}$.
We then extend $\Dual$ to $K^b(\Ocat\operatorname{-tilt})\cong D^b(\Ocat)$.

\begin{Lem}\label{Lem:dual_O}
The following claims are true:
\begin{enumerate}
\item We have $\Theta_s\circ \Dual\cong \Dual\circ\Theta_s$ and $\Delta^{AS}_{x}\otimes\Dual(\bullet)
\cong \Dual(\Delta^{AS}_x\otimes\bullet)$ for all simple affine
reflections $s$ and all $x\in \Lambda/\Lambda_r$.
\item We have $\Dual(\Delta(x))\cong \nabla(x)$ for all $x\in W^a$.
\item The functor $\Dual$ is t-exact.
\item We have $\Dual(L(x))\cong L(x)$.
\end{enumerate}
\end{Lem}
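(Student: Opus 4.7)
The plan is to deduce all four parts from the corresponding statements for the $\Ring$-linear duality $\Dual_\Ring$ on $\Ocat_\Ring$ recorded in Lemma \ref{Cor:dual_stand_costand}, by specializing to $\F$; the shift $[\dim\h]$ in the definition of $\Dual$ is chosen precisely to compensate for the shift that appears when one dualizes an HC-tilting bimodule over $\Ring$ and then specializes. The essential auxiliary input will be Corollary \ref{Cor:flat_hom}, which ensures that $\Hom$ spaces between HC-tiltings are $\Ring$-flat and compatible with base change to $\F$, so that functorial isomorphisms and the action of $\Dual$ on morphisms between tiltings descend cleanly from $\Ocat_\Ring$ to $\Ocat$.

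For part (1), I would observe that the functors $\Theta_s$ and $\Delta^{AS}_x \otimes \bullet$ are $\Ring$-linear endofunctors of $\Ocat_\Ring$ that commute with specialization, so the commutation isomorphisms in Lemma \ref{Cor:dual_stand_costand}(2) descend to $\Ocat$, with the cohomological shift $[\dim\h]$ being inert with respect to such commutations. For part (2), my approach is to invoke Lemma \ref{Cor:dual_stand_costand}(3), namely $\Dual_\Ring(\nabla_\Ring(x)) \cong \Delta_\Ring(x)$: represent $\nabla_\Ring(x)$ by a complex $T_\Ring^\bullet$ of $\Ocat_\Ring$-tiltings so that $\Dual_\Ring(T_\Ring^\bullet)$ represents $\Delta_\Ring(x)$. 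Specializing and using that $\nabla_\Ring(x), \Delta_\Ring(x)$ are flat over $\Ring$ (from the construction in Section \ref{SS_O}) together with Corollary \ref{Cor:flat_hom} to match $\Dual_\Ring$ on tiltings with $\Dual$ on tiltings after specialization, we obtain $\Dual(\nabla(x)) \cong \Delta(x)$ in $D^b(\Ocat)$; the opposite direction $\Dual(\Delta(x)) \cong \nabla(x)$ then follows from $\Dual^2 \cong \operatorname{id}$, which is inherited from $\Dual_\Ring^2 \cong \operatorname{id}$ (Lemma \ref{Lem:dual_basic}(1) and Remark \ref{Rem:deformed_duality}) together with the cancellation of the two shifts coming from the contravariance of $\Dual$.

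For part (3), parts (1) and (2) together imply that $\Dual$ interchanges the standardly filtered and costandardly filtered full subcategories of $\Ocat$. Projectives in $\Ocat$ are standardly filtered and injectives costandardly filtered, and since Bruhat intervals in $W^{ea}$ are finite, every finite-length $M \in \Ocat$ admits a finite projective resolution $P_\bullet \to M$ and a finite injective resolution $M \to I^\bullet$. Applying $\Dual$ presents $\Dual(M)$ as a bounded complex of objects of $\Ocat$ concentrated in non-negative degrees (using the projective resolution) and in non-positive degrees (using the injective resolution), forcing $\Dual(M) \in \Ocat$ and giving t-exactness. Part (4) then follows: by (3) the object $\Dual(L(x))$ is a simple in $\Ocat$, and since $L(x)$ is the image of the canonical map $\phi: \Delta(x) \to \nabla(x)$, applying $\Dual$ produces a nonzero map $\Delta(x) \cong \Dual(\nabla(x)) \to \Dual(\Delta(x)) \cong \nabla(x)$ which must be a scalar multiple of $\phi$ (since $\Hom_\Ocat(\Delta(x), \nabla(x))$ is one-dimensional), yielding $\Dual(L(x)) = \operatorname{im}(\Dual(\phi)) = \operatorname{im}(\phi) = L(x)$. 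The main obstacle is expected to be in part (3): one must confirm that $\Ocat$ genuinely has enough projectives and injectives with finite resolutions for finite-length objects, and must carefully track how the shift $[\dim\h]$ interacts with specialization of $\Dual_\Ring$ at the level of complexes of tiltings under the equivalence $K^b(\Ocat\operatorname{-tilt}) \cong D^b(\Ocat)$ from Lemma \ref{Cor:exactness_Theta}.
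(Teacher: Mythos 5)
Your proposal is correct and follows essentially the same route as the paper: parts (1) and (2) by specializing Lemma \ref{Cor:dual_stand_costand}(2),(3) from $\Ring$ to $\F$, part (3) by sandwiching $\Dual(M)$ between non-negative and non-positive degrees via a left resolution by standardly filtered objects and a right resolution by costandardly filtered ones (the paper phrases these resolutions directly in terms of standardly/costandardly filtered objects rather than projectives/injectives, which sidesteps your worry about enough projectives), and part (4) by identifying $\Dual(L(x))$ as the image of the dual of the canonical map $\Delta(x)\rightarrow\nabla(x)$.
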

\begin{proof}
(1) and (2) follow from (2) and (3) of Lemma \ref{Cor:dual_stand_costand}. Every object in
$\Ocat$ admits a resolution $0\rightarrow M_k\rightarrow\ldots\rightarrow M_0$, where
all $M_i$ are standardly filtered and also a resolution $N_0\rightarrow \ldots\rightarrow N_\ell\rightarrow 0$, where all $N_j$ are costandardly filtered. Now (3) follows from (2) combined with
$\Dual^2\cong \operatorname{id}$. Since $L(x)$ is the image of the unique (up to scaling) homomorphism
$\Delta(x)\rightarrow \nabla(x)$, (4) follows from (2) and (3).
\end{proof}

\subsection{Equivalence $\Ocat\xrightarrow{\sim} \Perv(\HC^G(\U)_0)$}
This is the main section of the paper. Here we construct an equivalence
$\Ocat\xrightarrow{\sim} \Perv(\HC^G(\U)_0)$. Let us explain the four main
steps of the construction and the proof.

\begin{itemize}
\item[(i)] We use $\SBim^\wedge\hookrightarrow \Hilt^G(\U^{\wedge_0})$
to  produce a full  $\SBim^\wedge$-equivariant
embedding $\Ocat\operatorname{-tilt}\hookrightarrow \Hilt^G(\U)_0$ intertwining the duality
functors.
\item[(ii)] We show that $\Ocat\operatorname{-tilt}\hookrightarrow \Hilt^G(\U)_0$
extends to an $\SBim^\wedge$-equivariant full embedding
$D^b(\Ocat)\hookrightarrow D^b(\HC^G(\U)_0)$ intertwining the duality
functors.
\item[(iii)] We know that the duality functor for $\Perv(\HC^G(\U)_0)$
fixes all simples. We use this and the fact that $D^b(\Ocat)\hookrightarrow D^b(\HC^G(\U)_0)$
is a full embedding intertwining the duality functors to show that this full embedding
is t-exact and sends simples to simples (on the right we consider the perverse t-structure).
\item[(iv)] We show that the resulting full embedding $\Ocat\hookrightarrow \Perv(\HC^G(\U)_0)$
is an equivalence by looking at the induced map between the Grothendieck groups.
\end{itemize}

We start with (i). We can view $\Ocat\operatorname{-tilt}$ as a full subcategory
in $K^b(\Ocat_{\Ring}\operatorname{-tilt})$: we send $T(x)$ to the Koszul
complex for the action of $\h$ on $T_{\Ring}(x)$. Similarly,
$\Hilt^G(\U)_0$ embeds into $K^b(\Hilt^G(\U^{\wedge_0}))$. Recall
that $\Ocat_{\Ring}\operatorname{-tilt}\cong \SBim^\wedge$. Clearly,
the full embedding $K^b(\Ocat_{\Ring}\operatorname{-tilt})\hookrightarrow
K^b(\Hilt^G(\U^{\wedge_0}))$ (which is $\SBim^\wedge$-equivariant and intertwines
the duality functors) induces a full embedding $\Ocat\operatorname{-tilt}
\hookrightarrow \Hilt^G(\U)_0$. By construction, this embedding is
$\SBim^\wedge$-equivariant and intertwines the duality functors. This completes (i)
above.

(ii) is the following lemma.

\begin{Lem}\label{Lem:ext_derived_cats}
The full embedding $\Ocat\operatorname{-tilt}\hookrightarrow \Hilt^G(\U)_0$
uniquely extends to an exact functor $\varphi: D^b(\Ocat)\hookrightarrow D^b(\HC^G(\U)_0)$.
This extension is a full embedding, it is $\SBim^\wedge$-equivariant and intertwines
the duality functors.
\end{Lem}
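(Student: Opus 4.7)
The plan is to present both sides of $\varphi$ as homotopy categories of tilting objects, and then compose with the additive embedding from step (i). First I would establish that the natural functor $K^b(\Ocat\operatorname{-tilt})\xrightarrow{\sim} D^b(\Ocat)$ is an equivalence. This is the direct analog of Lemma \ref{Cor:exactness_Theta}(1) for the specialized category: tilting objects have no higher self-extensions in $\Ocat$, and every object in $\Ocat$ admits a finite resolution by standardly (hence tilting-)filtered objects, so the argument used there for $\Ocat_{\Ring}$ applies verbatim. On the Harish-Chandra side, Proposition \ref{Prop:Ext_vanishing} gives the vanishing of higher $\Ext$ groups between HC-tilting bimodules in $\HC^G(\U)_0$; by the standard argument this promotes the tautological functor
\[
K^b(\Hilt^G(\U)_0)\hookrightarrow D^b(\HC^G(\U)_0)
\]
to a fully faithful embedding of triangulated categories.

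I would then define $\varphi$ as the composition
\[
D^b(\Ocat)\xleftarrow{\sim} K^b(\Ocat\operatorname{-tilt})\hookrightarrow K^b(\Hilt^G(\U)_0)\hookrightarrow D^b(\HC^G(\U)_0),
\]
where the middle arrow is obtained by applying $K^b(-)$ to the full additive embedding of step (i) and is therefore fully faithful automatically. Uniqueness of the exact extension is immediate because tilting objects generate $D^b(\Ocat)$ as a triangulated category, so any exact functor out of $D^b(\Ocat)$ is determined up to canonical natural isomorphism by its restriction to $\Ocat\operatorname{-tilt}$ together with its action on morphisms there.

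It remains to transfer the $\SBim^\wedge$-equivariance and compatibility with duality from the additive level. The $\SBim^\wedge$-action on both bounded derived/homotopy categories is inherited term-wise from its action on the respective additive category of tiltings; the equivalence $K^b(\Ocat\operatorname{-tilt})\cong D^b(\Ocat)$ is $\SBim^\wedge$-equivariant because $\SBim^\wedge$ acts on $\Ocat$ by exact functors preserving tiltings, and the embedding $K^b(\Hilt^G(\U)_0)\hookrightarrow D^b(\HC^G(\U)_0)$ is $\SBim^\wedge$-equivariant for the same tautological reason. Similarly, both duality functors were \emph{defined} by extending contravariant auto-equivalences of the corresponding tilting subcategories (via $\Dual_{\Ring}[\dim\h]$ on the $\Ocat$-side, and via $\Dual$ on $\Hilt^G(\U)_0$ applied term-wise and then promoted to $D^b(\HC^G(\U)_0)$). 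Since the embedding from step (i) was arranged to intertwine these contravariant auto-equivalences, so does $\varphi$.

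The only substantive input in this argument is Proposition \ref{Prop:Ext_vanishing}; everything else is formal bookkeeping once the two homotopy-category presentations are in place. The main potential obstacle would be if one of the two presentations $K^b(\tilt)\cong D^b$ failed — but on the $\Ocat$-side this is standard highest weight theory, and on the HC-side it is purchased precisely by Proposition \ref{Prop:Ext_vanishing}.
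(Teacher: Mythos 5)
Your proposal is correct and follows essentially the same route as the paper: identify $D^b(\Ocat)$ with $K^b(\Ocat\operatorname{-tilt})$ via the highest weight structure, use Proposition \ref{Prop:Ext_vanishing} to embed $K^b(\Hilt^G(\U)_0)$ fully faithfully into $D^b(\HC^G(\U)_0)$, and note that equivariance and duality compatibility are inherited term-wise from the additive construction. The extra detail you supply on why $K^b(\Ocat\operatorname{-tilt})\simeq D^b(\Ocat)$ is exactly what the paper compresses into the phrase ``because $\Ocat$ is an ideal finite highest weight category.''
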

\begin{proof}
The full embedding $\Ocat\operatorname{-tilt}\hookrightarrow \Hilt^G(\U)_0$
uniquely extends to a full embedding $K^b(\Ocat\operatorname{-tilt})\hookrightarrow
K^b(\Hilt^G(\U)_0)$. The source category is $D^b(\Ocat)$ because $\Ocat$ is an ideal
finite highest weight category. Since
there are no higher Ext's between objects in $\Hilt^G(\U)_0$, see Proposition
\ref{Prop:Ext_vanishing}, the natural functor $K^b(\Hilt^G(\U)_0)
\rightarrow D^b(\HC^G(\U)_0)$ is a full embedding. The remaining claims (the equivariance and
the compatibility with the dualities) follow from the construction.
 \end{proof}

 The following proposition is (iii) in the strategy above.

\begin{Prop}\label{Prop:exactness_property}
The full embedding $\varphi: D^b(\Ocat)\hookrightarrow D^b(\HC^G(\U)_0)$ is
t-exact (with respect to the perverse t-structure on the target)
and sends simple objects to simple objects.
\end{Prop}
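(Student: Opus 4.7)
The plan is to prove both statements in three stages: first, show that every HC-tilting bimodule lies in the perverse heart; second, deduce t-exactness of $\varphi$ by resolving objects of $\Ocat$ by tiltings from both sides, using the duality compatibility from Lemma \ref{Lem:ext_derived_cats}; third, identify $\varphi(L(x))$ with a simple perverse object by a K-theoretic argument combined with $\Dual$-invariance from Proposition \ref{Prop:perverse_K0}(2) and Lemma \ref{Lem:dual_O}(4).

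For the first stage, by Definition \ref{defi:hilting} (and its specialization to $\HC^G(\U)_0$) every HC-tilting bimodule is a direct summand of $\pr_0(T\otimes \U_0)$ for some tilting $G$-module $T$. Example \ref{Ex:perverse} says this bimodule is perverse, and $\Perv(\HC^G(\U)_0)$ is closed under direct summands, so every HC-tilting is perverse. In particular, $\varphi(T(x))\in \Perv(\HC^G(\U)_0)$ for every $T(x)\in \Ocat\operatorname{-tilt}$. For the second stage, given $M\in \Ocat$, I fix a finite tilting resolution $0\to T_k\to \cdots\to T_0\to M\to 0$, which exists because $\Ocat$ is a highest weight category of finite global dimension (using BGG-type standard resolutions together with tilting resolutions of standards); since $\varphi$ is triangulated, $\varphi(M)$ is represented by the complex $\varphi(T_\bullet)$, a bounded complex of perverse objects in non-positive cohomological degrees, hence $\varphi(M)\in \,^p\!D^{b,\leqslant 0}$. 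Applying $\Dual$ (on $\Ocat$) to a tilting resolution of $\Dual M$ produces a finite tilting coresolution of $M$ (using self-duality of indecomposable tiltings which follows from Lemma \ref{Lem:dual_O}(2),(3)); together with the intertwining $\varphi\circ \Dual\cong \Dual\circ \varphi$ from Lemma \ref{Lem:ext_derived_cats}, this gives $\varphi(M)\in \,^p\!D^{b,\geqslant 0}$. Hence $\varphi(M)\in \Perv(\HC^G(\U)_0)$, establishing t-exactness.

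For the third stage, t-exactness and full faithfulness of $\varphi$ produce an injective $\SBim^\wedge$-equivariant homomorphism $\varphi_*\colon K_0(\Ocat)\to K_0(\Perv(\HC^G(\U)_0))$. Both K-groups are identified with $\Z W^{ea}$ as $\SBim^\wedge$-modules: the target by Proposition \ref{Prop:perverse_K0}(1), the source via the highest weight structure together with Lemma \ref{Lem:Ocat_standards} (giving $[\Delta(x)]\leftrightarrow x$). Since $\varphi_*$ sends $[L(1)]=[T(1)]$ to $[\U_0]$, it is the identity map on $\Z W^{ea}$, so $\{[\varphi L(x)]\}_{x\in W^{ea}}$ is a $\Z$-basis of $K_0(\Perv(\HC^G(\U)_0))$. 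Each $[\varphi L(x)]$ expands as a non-negative integer combination of simple perverse classes $\{[L^{\Perv}_w]\}$ with finitely many nonzero coefficients (Lemma \ref{Lem:finite_length}); the resulting change-of-basis matrix is $\Z$-invertible with non-negative entries, forcing it to be a permutation matrix, and so each $\varphi(L(x))$ is simple. The $\Dual$-fixedness of $\varphi L(x)$ combined with Proposition \ref{Prop:perverse_K0}(2) provides an auxiliary constraint confirming the bijection.

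The main obstacle is the final change-of-basis step: since $W^{ea}$ is infinite, the matrix is infinite, and the deduction that a non-negative $\Z$-invertible infinite matrix is a permutation matrix requires column-finiteness or a replacement. The cleanest remedy is to proceed by induction on the Bruhat order: for minimal $x$ one has $L(x)=T(x)$ sent to an HC-tilting whose simplicity in $\Perv$ can be checked via the coherent realization of Theorem \ref{Thm:derived_loc_HC}, and for the induction step one uses the short exact sequence $0\to L(x)\to \nabla(x)\to C\to 0$ with $C$ having composition factors $L(y)$, $y<x$, so that $\varphi(C)\in\Perv$ consists of already-known simples, and one combines the constraints $\End_{\Perv}(\varphi L(x))=\F$, $\Dual$-fixedness, and the K-theoretic identity to conclude.
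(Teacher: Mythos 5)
Your Stage 1 is correct (HC-tilting objects are perverse, by Example \ref{Ex:perverse} and closure of the heart under direct summands), but both of the main steps have genuine gaps. In Stage 2 you assume that every $M\in \Ocat$ admits a finite exact tilting resolution $0\to T_k\to\cdots\to T_0\to M\to 0$ concentrated in non-positive degrees. This is false in a general highest weight category: a simple object that is neither standard nor costandard typically admits no epimorphism from a tilting object at all (already in the two-element-poset model, the simple indexed by the larger element has every indecomposable tilting mapping to it with image in the radical). What is true --- and what the paper uses to identify $D^b(\Ocat)$ with $K^b(\Ocat\operatorname{-tilt})$ --- is that $M$ is represented by a bounded complex of tiltings with terms in \emph{both} positive and negative degrees; this only places $\varphi(M)$ in $\,^p\!D^{[-N,N]}$ for some $N$, which is vacuous. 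The paper's proof of t-exactness (Lemma \ref{Lem:self_dual_embedding}, Step 2) is instead by contradiction and uses \emph{fullness} of $\varphi$ in an essential way: if some $H^i(\varphi M')\neq 0$ with $i>0$ maximal, a simple quotient $L$ of it, together with $L\cong\Dual L$, produces a nonzero map $H^i(\varphi M')\to H^{-i}(\varphi\Dual_{\mathcal C}M')$, hence a nonzero map $M'[i]\to \Dual_{\mathcal C}M'[-i]$ between shifts of heart objects, a contradiction. Your argument never invokes fullness, and without it the statement is simply not provable by resolutions.

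Stage 3 also fails as written: a $\Z$-invertible matrix with non-negative integer entries need not be a permutation matrix even in rank $2$ (e.g.\ $\left(\begin{smallmatrix}1&1\\0&1\end{smallmatrix}\right)$), so knowing that the classes $[\varphi L(x)]$ form a basis and expand non-negatively in simple classes does not force simplicity. The obstruction you flag (infiniteness of $W^{ea}$) is not the real problem, and the sketched Bruhat induction does not close it. The paper's argument (Lemma \ref{Lem:self_dual_embedding}, Steps 1 and 3) again combines fullness with self-duality of simples in $\Perv(\HC^G(\U)_0)$ (Proposition \ref{Prop:perverse_K0}(2)): given a simple quotient $\varphi L'\twoheadrightarrow L$, the composite $\varphi L'\twoheadrightarrow L\cong \Dual L\hookrightarrow \Dual\varphi L'\cong\varphi\Dual L'$ is nonzero, hence by fullness comes from a nonzero map $L'\to\Dual L'$, which is an isomorphism since both are simple; therefore the composite is an isomorphism and $\varphi L'\cong L$. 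The K-theoretic count you set up is used in the paper only later, in Theorem \ref{Thm:final_equivalence}, to prove essential surjectivity --- not to prove simplicity.
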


Recall that $\Dual$ fixes all simples  in $\Perv(\HC^G(\U)_0)$, Proposition \ref{Prop:perverse_K0}
(and in $\Ocat$, Lemma \ref{Lem:dual_O}). Since the
embedding $D^b(\Ocat)\hookrightarrow D^b(\HC^G(\U)_0)$ intertwines the duality
functors, Proposition \ref{Prop:exactness_property} follows from the next lemma.

\begin{Lem}\label{Lem:self_dual_embedding}
Let $\mathcal{C},\mathcal{D}$ be triangulated categories equipped with homologically
finite t-structures with hearts $\mathcal{C}^\heartsuit, \mathcal{D}^\heartsuit$
and t-exact duality functors $\Dual_{\mathcal{C}},\Dual_{\mathcal{D}}$.
Let $\varphi:\mathcal{C}\rightarrow \mathcal{D}$ be an exact full embedding
such that $\varphi\circ \Dual_{\mathcal{C}}\cong \Dual_{\mathcal{D}}\circ \varphi$.
Assume, further, that all objects in $\mathcal{C}^\heartsuit, \mathcal{D}^\heartsuit$
have finite length and all simples in $\mathcal{D}^\heartsuit$ are fixed by $\Dual_{\mathcal{D}}$.
Then $\varphi$ is t-exact and sends simples to simples.
\end{Lem}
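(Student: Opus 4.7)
The plan is to prove the lemma in two stages. The combination of fullness and exactness of $\varphi$ with the intertwining $\varphi\circ\Dual_\mathcal{C}\cong\Dual_\mathcal{D}\circ\varphi$ gives the identity
\[
\Hom_\mathcal{D}(\varphi(X),\varphi(Y)[k])=\Ext^k_\mathcal{C}(X,Y),
\]
which vanishes for $X,Y\in\mathcal{C}^\heartsuit$ and $k<0$. Moreover $\varphi(\Dual_\mathcal{C} X)\cong\Dual_\mathcal{D}\varphi(X)$, so t-exactness of $\Dual_\mathcal{D}$ forces the cohomological range of $\varphi(\Dual_\mathcal{C} X)$ to be the reflection about $0$ of that of $\varphi(X)$. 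These two observations drive the entire argument.

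For t-exactness, fix $X\in\mathcal{C}^\heartsuit$ and let $b=\max\{i:H^i(\varphi(X))\neq0\}$. Assume toward a contradiction that $b>0$ and pick a simple quotient $L$ of $H^b(\varphi(X))$. The projection gives a morphism $f:\varphi(X)\to L[-b]$ that is surjective on $H^b$. Since $L\in\mathcal{D}^\heartsuit$ is self-dual, applying $\Dual_\mathcal{D}$ to $f$ and using the intertwining yields a morphism $g:L[b]\to\varphi(\Dual_\mathcal{C} X)$ that is injective on $H^{-b}$. Forming the composition
\[
\varphi(X)\xrightarrow{f}L[-b]\xrightarrow{g[-2b]}\varphi(\Dual_\mathcal{C} X)[-2b]
\]
produces an element of $\Hom_\mathcal{D}(\varphi(X),\varphi(\Dual_\mathcal{C} X)[-2b])=\Ext^{-2b}_\mathcal{C}(X,\Dual_\mathcal{C} X)$, which vanishes since $-2b<0$. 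On the other hand, the induced map on $H^b$ is the composition of the surjection $H^b(\varphi(X))\twoheadrightarrow L$ with the injection $L\hookrightarrow H^{-b}(\varphi(\Dual_\mathcal{C} X))=H^b(\varphi(\Dual_\mathcal{C} X)[-2b])$, hence nonzero --- contradiction. So $b\leq0$, and applying the same argument to $\Dual_\mathcal{C} X$ (whose image has cohomological maximum $-a$) gives $a\geq 0$; therefore $\varphi(X)\in\mathcal{D}^\heartsuit$.

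For the simple-to-simple claim, let $L\in\mathcal{C}^\heartsuit$ be simple, so that $\varphi(L)\in\mathcal{D}^\heartsuit$ and $\End_\mathcal{D}(\varphi(L))=\End_\mathcal{C}(L)=\F$. If $S\hookrightarrow\varphi(L)$ is a proper simple subobject, dualizing and using self-duality of $S$ yields a surjection $\varphi(\Dual_\mathcal{C} L)\twoheadrightarrow S$. Composing these produces a nonzero element of $\Hom_\mathcal{D}(\varphi(\Dual_\mathcal{C} L),\varphi(L))=\Hom_\mathcal{C}(\Dual_\mathcal{C} L,L)$. If $L\not\cong\Dual_\mathcal{C} L$ this Hom is zero --- contradiction. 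Otherwise the composition is an element of $\End_\mathcal{D}(\varphi(L))=\F$, which is impossible: a nonzero scalar cannot factor through a proper subobject, while zero is excluded because a monomorphism composed with a nonzero epimorphism is nonzero. Hence $\varphi(L)$ is simple.

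The main obstacle is the first part --- specifically, producing a map that lands in a negative Ext group and is simultaneously nonzero on cohomology. The crucial construction is $g[-2b]\circ f$; here the self-duality of simples in $\mathcal{D}^\heartsuit$ is essential, as it is what guarantees that the dualized morphism $g$ again has source $L$, which in turn makes the $H^b$ computation nonvanishing. Once t-exactness is established, the second part is a short endomorphism-ring argument.
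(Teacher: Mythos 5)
Your proof is correct and follows essentially the same route as the paper: t-exactness is obtained by composing the projection onto a self-dual simple top cohomology with its dual to produce a nonzero map in a negative Ext group, and simplicity of $\varphi(L)$ is deduced by factoring a nonzero (hence, by Schur, invertible) map $\Dual_{\mathcal{C}}L\to L$ through a proper simple constituent. The only cosmetic difference is that you work with a simple subobject where the paper uses a simple quotient, and your appeal to $\End_{\mathcal{D}}(\varphi(L))=\F$ is unnecessary (and not literally implied by the hypotheses) --- Schur's lemma in the form ``nonzero maps between simples are isomorphisms'' already suffices, as your own argument shows.
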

We will see in the proof that the condition that all objects in $\mathcal{C}^\heartsuit$ have finite length follows from the other conditions of the lemma.
\begin{proof}[Proof of Lemma \ref{Lem:self_dual_embedding}]
The proof is in several steps.

{\it Step 1}. First of all, note that for every $M\in \mathcal{D}^\heartsuit$ and every
simple $L$ with $M\twoheadrightarrow L$ we have a composed morphism $\kappa_L:M\twoheadrightarrow
L\xrightarrow{\sim} \Dual_{\mathcal{D}}L\hookrightarrow \Dual_{\mathcal{D}}M$. It is nonzero
because it has nonzero image.

{\it Step 2}. Assume $\varphi$ is not t-exact. This means there is $M'\in \mathcal{C}^\heartsuit$
such that $H^i(\varphi M')\neq 0$ for some $i\neq 0$. Replacing $M'$ with $\Dual_{\mathcal{C}}M'$ if needed
we can assume that $i>0$. Also we can assume that $H^j(\varphi M')=0$ for $j>i$ because the t-structure on
$\mathcal{D}$ is homologically finite. Pick a simple
object $L$ with $H^i(\varphi M')\twoheadrightarrow L$. Then $L\cong \Dual_{\mathcal{D}} L\hookrightarrow
\Dual_{\mathcal{D}}(H^i(\varphi M'))\cong H^{-i}(\varphi(\Dual_{\mathcal{C}}M'))$. So we get a nonzero
morphism $H^i(\varphi M')\rightarrow H^{-i}(\varphi(\Dual_{\mathcal{C}}M'))$. It follows that
there is a nonzero morphism $\varphi M'[i]\rightarrow \varphi(\Dual_{\mathcal{C}}M')[-i]$. Since $\varphi$ is a full embedding, this means that we have a nonzero morphism $M'[i]\rightarrow \Dual_{\mathcal{C}}M'[-i]$. Since both $M', \Dual_{\mathcal{C}}M'$ are in the heart of a t-structure and $i>0$, we arrive at a contradiction. This proves that $\varphi$ is t-exact.

{\it Step 3}. Now we show that $\varphi$ maps simple objects to simple objects.
Let $L'\in \mathcal{C}^\heartsuit$ be simple. Assume $\varphi L'$ is not simple.
Let $L$ be a simple in $\mathcal{D}^\heartsuit$ such that $\varphi L'\twoheadrightarrow
L$. Consider the nonzero morphism
$
\kappa_L:\varphi L'\rightarrow \Dual_{\mathcal{D}}\varphi L'\cong
\varphi \Dual_{\mathcal{C}}L'
$
 associated to $L$ as in Step 1. Since $\varphi$ is a full
embedding, $\kappa_L$ comes from a nonzero morphism
$\kappa'_L:L'\rightarrow \Dual_{\mathcal{C}}L'.$
Since $\Dual_{\mathcal{C}}L'$ is also simple, we conclude that $\kappa'_L$ is
an isomorphism. Hence $\kappa_L$ is an isomorphism. It follows that $\varphi L'\xrightarrow{\sim}
L$. We arrive at a contradiction with the assumption that $\varphi L'$ is not simple which finishes the proof of the lemma.
\end{proof}

Finally we proceed to step (iv) of the construction above.

\begin{Thm}\label{Thm:final_equivalence}
The full embedding $D^b(\Ocat)\hookrightarrow D^b(\HC^G(\U)_0)$ is a t-exact category equivalence.
\end{Thm}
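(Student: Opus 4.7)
The functor $\varphi\colon D^b(\Ocat)\to D^b(\HC^G(\U)_0)$ is by Lemma~\ref{Lem:ext_derived_cats} and Proposition~\ref{Prop:exactness_property} a fully faithful, t-exact, $\SBim^\wedge$-equivariant embedding whose restriction $\bar\varphi$ to the hearts sends simples to simples. The only remaining point is essential surjectivity. Because the perverse t-structure on $D^b(\HC^G(\U)_0)$ is homologically finite and every object of $\Perv(\HC^G(\U)_0)$ has finite length (Lemma~\ref{Lem:finite_length}), it suffices to show that every simple object of $\Perv(\HC^G(\U)_0)$ is isomorphic to $\bar\varphi(L(x))$ for some $x\in W^{ea}$.

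I plan to deduce this from a Grothendieck group computation. Let $\varphi_*\colon K_0(\Ocat)\to K_0(\Perv(\HC^G(\U)_0))$ denote the induced map. By Proposition~\ref{Prop:perverse_K0}(1) (via Proposition~\ref{Prop:HC_K_group}), the target is $W^{ea}$-equivariantly isomorphic to the regular module $\Z W^{ea}$, sending $[\U_0]\leftrightarrow 1$ and with the action given by the reflection functors, so that $B_s^{HC}$ acts as $1+L_s$ and $\Delta_x^{HC}$ as $L_x$. On the source side the highest weight structure together with Lemma~\ref{Lem:Ocat_standards} identifies $K_0(\Ocat)$ with $\Z W^{ea}$ via $[\Delta(x)]\leftrightarrow x$, again with $B_s^{AS}$ and $\Delta_x^{AS}$ acting by $1+L_s$ and $L_x$. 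Since $\varphi$ is $\SBim^\wedge$-equivariant and $\bar\varphi(T(1))=\U_0$ by construction of the embedding $\SBim^\wedge\hookrightarrow\Hilt^G(\U^{\wedge_0})$, the map $\varphi_*$ commutes with every $L_s$ and every $L_x$ for $x\in\Lambda/\Lambda_r$; since these generate $W^{ea}$, $\varphi_*$ is a $\Z W^{ea}$-module homomorphism from the free rank-one module to itself sending $1\mapsto 1$, i.e., the identity. In particular, $\varphi_*$ is bijective.

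To finish: since $\bar\varphi$ is faithful, the objects $\bar\varphi(L(x))$ are pairwise non-isomorphic simples of $\Perv(\HC^G(\U)_0)$, and their classes are linearly independent in $K_0(\Perv(\HC^G(\U)_0))$; bijectivity of $\varphi_*$ then forces these classes to form a $\Z$-basis, so every isomorphism class of simple perverse HC-bimodule is hit. Combined with finite length and with $\bar\varphi$ being fully faithful and exact, this upgrades $\bar\varphi$ to an equivalence of abelian categories $\Ocat\xrightarrow{\sim}\Perv(\HC^G(\U)_0)$. Homological finiteness of the perverse t-structure on $D^b(\HC^G(\U)_0)$ then formally promotes this to the desired derived equivalence: every object of the target is built by iterated cones from its finitely many nonzero perverse cohomologies, each of which lies in the essential image of $\varphi$.

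The principal obstacle is the $\SBim^\wedge$-equivariant identification $K_0(\Ocat)\cong\Z W^{ea}$: one needs $\Theta_s$ to act as $1+L_s$ uniformly in $W^{ea}$, whereas Lemma~\ref{Lem:Ocat_standards} directly yields $[\Theta_s\Delta(x)]=[\Delta(x)]+[\Delta(sx)]$ only in the case $sx>x$. The $sx<x$ case is to be handled via the biadjointness of $\Theta_s$ and the perfect pairing between the Grothendieck groups of standardly and costandardly filtered objects already used inside the proof of Lemma~\ref{Lem:Ocat_standards}. Once this combinatorial compatibility is verified and Proposition~\ref{Prop:HC_K_group} is invoked on the HC side, the remainder of the argument is formal.
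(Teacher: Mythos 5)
Your proposal is correct and follows essentially the same route as the paper: $\SBim^\wedge$-equivariance plus the identification $K_0(\Perv(\HC^G(\U)_0))\cong\Z W^{ea}$ (Proposition \ref{Prop:perverse_K0}) shows the induced map on $K_0$ hits everything, and since simples go to simples this forces essential surjectivity on hearts and hence on $D^b$. The only difference is that you prove $\varphi_*$ is bijective by also computing $K_0(\Ocat)$ as a $\Z W^{ea}$-module, whereas the paper only needs that $[\U_0]$ generates the target under the $K_0(\SBim^\wedge)$-action, so that $\varphi_*$ is an epimorphism; this sidesteps the ``$sx<x$'' compatibility you flag as the principal obstacle.
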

\begin{proof}
Since we already know that  $\varphi$ is a full embedding, what we need to show that every simple in $\Perv(\HC^G(\U)_0)$ comes from a simple in $\Ocat$. Recall that $\varphi:\Ocat\rightarrow
\Perv(\HC^G(\U)_0)$ sends $T(1)=L(1)$ to $\U_0$ and is $\SBim^\wedge$-equivariant.
Also recall that by Proposition \ref{Prop:perverse_K0}, the class of $\U_0$ generates
$K_0(\Perv(\HC^G(\U)_0))$ under the action of $K_0(\SBim^{\wedge})$. It follows
that $\varphi$ induces an epimorphism $K_0(\Ocat)\twoheadrightarrow K_0(\Perv(\HC^G(\U)_0))$.
Since $\varphi$ maps simples to simples, we deduce that all simples
$\Perv(\HC^G(\U)_0)$ are in the image of $\varphi$. This finishes the proof.
\end{proof}

\subsection{Other equivalences}
Here we deduce some consequences from Theorem \ref{Thm:final_equivalence}.

Recall, Proposition
\ref{Prop:BR}, that we have a monoidal full embedding $\SBim^\wedge\hookrightarrow \Hilt^G(\U^{\wedge_0})$.
Denote it by $\varphi_\Ring$. The following result establishes Theorem \ref{Thm:additive}
from the introduction.

\begin{Thm}\label{Thm:HC-tilting_equiv}
The full embedding $\varphi_{\Ring}:\SBim^\wedge\hookrightarrow \Hilt^G(\U^{\wedge_0})$ is
an equivalence.
\end{Thm}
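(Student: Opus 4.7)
The plan is to combine Corollary \ref{Cor:flat_hom}(2) with the abelian equivalence from Theorem \ref{Thm:final_equivalence}. By Corollary \ref{Cor:flat_hom} the specialization $\bullet\otimes_\Ring \F$ is fully faithful on $\Hilt^G(\U^{\wedge_0})$ and induces a bijection on isomorphism classes of indecomposables with those of $\Hilt^G(\U)_0$; since $\varphi_\Ring$ is already a full embedding by Proposition \ref{Prop:BR}, it suffices to prove essential surjectivity after specialization. Under the identification $\SBim^\wedge\otimes_\Ring \F\cong \Ocat\operatorname{-tilt}$ (coming from $\Ocat_\Ring\operatorname{-tilt}\cong \SBim^\wedge$), step (i) in the proof of Theorem \ref{Thm:final_equivalence} identifies this specialized functor with the restriction to $\Ocat\operatorname{-tilt}$ of the abelian equivalence $\varphi:\Ocat\xrightarrow{\sim}\Perv(\HC^G(\U)_0)$.

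Next, every HC-tilting is a summand of $\pr_0(T\otimes \U_0)$ for a tilting $G$-module $T$, and such bimodules are perverse by Example \ref{Ex:perverse}; hence $\Hilt^G(\U)_0\subseteq \Perv(\HC^G(\U)_0)$, and the problem reduces to showing $\varphi^{-1}(\Hilt^G(\U)_0)=\Ocat\operatorname{-tilt}$. The inclusion $\varphi(\Ocat\operatorname{-tilt})\subseteq \Hilt^G(\U)_0$ is immediate: by Proposition \ref{Prop:BR} the functor $\varphi$ is $\SBim^\wedge$-equivariant, so since $\Ocat\operatorname{-tilt}$ is the Karoubian additive closure of the $\SBim^\wedge$-orbit of $T(1)=\varphi^{-1}(\U_0)$, and $\Hilt^G(\U)_0$ is closed under tensoring with $B_s^{HC}$ and $\Delta_x^{HC}$ by Lemma \ref{Lem:HC-tilting_basic}, its image lies inside $\Hilt^G(\U)_0$.

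The reverse inclusion is the main step: for every tilting $G$-module $T$ one must exhibit $\pr_0(T\otimes \U_0)$ as a direct summand of a tensor product of bimodules $B_{s_i}^{HC}$ and $\Delta_x^{HC}$. My approach is via translation functors. Decomposing $T$ by the $W^{ea}$-linkage classes of its weights, the functor $\pr_0(T\otimes\bullet)$ on $\HC^G(\U)_0$ splits as a sum of compositions of translations to and from the walls of the fundamental alcove, together with translations among alcoves in a common $\Lambda/\Lambda_r$-orbit; the through-wall compositions produce the wall-crossing functors $\Theta_s$, while the inter-alcove translations give the $\Delta_x^{HC}$-factors. This exhibits $\pr_0(T\otimes \U_0)$, and hence each of its summands, as lying in the Karoubian closure of the $\SBim^\wedge$-orbit of $\U_0$, completing the proof. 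The main obstacle is precisely this translation-functor decomposition: while classical in characteristic zero, in the modular setting one must carefully track the $p$-scaled $W^{ea}$-action on $\Lambda$ and ensure that tensoring with a tilting $G$-module is compatible, up to summands, with the Soergel generators of $\Hilt^G(\U^{\wedge_0})$.
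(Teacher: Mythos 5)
Your reduction to the specialized setting is exactly the paper's: by Corollary \ref{Cor:flat_hom} it suffices to show that the induced full embedding $\Ocat\operatorname{-tilt}\hookrightarrow\Hilt^G(\U)_0$ is essentially surjective, and your argument for $\varphi(\Ocat\operatorname{-tilt})\subseteq\Hilt^G(\U)_0$ is fine. The gap is in the reverse inclusion, which is the entire content of the theorem. You propose to exhibit $\operatorname{pr}_0(T\otimes\U_0)$, for $T$ an arbitrary tilting $G$-module, as a direct summand of tensor products of the bimodules $B_{s}^{HC}$ and $\Delta_x^{HC}$ via a decomposition of $\pr_0(T\otimes\bullet)$ into wall-crossing and inter-alcove translation functors. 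No such decomposition is established anywhere in the paper, and it is not a routine adaptation of the characteristic-zero translation principle: the assertion that every indecomposable summand of $\operatorname{pr}_0(T\otimes\U_0)$ lies in the Karoubian closure of the $\SBim^\wedge$-orbit of $\U_0$ is precisely the statement that there are no ``extra'' indecomposable HC-tilting bimodules beyond the images of the $B^{AS}_w$ --- i.e.\ it is a restatement of the theorem. You flag this yourself as ``the main obstacle,'' so the proposal does not close.

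The paper's route to this step is different and is worth internalizing: it uses the already-established \emph{derived} equivalence $\varphi\colon K^b(\Ocat\operatorname{-tilt})\cong D^b(\Ocat)\xrightarrow{\sim}D^b(\HC^G(\U)_0)$ of Theorem \ref{Thm:final_equivalence}. Since $\varphi$ factors through the full subcategory $K^b(\Hilt^G(\U)_0)\subset D^b(\HC^G(\U)_0)$ (fullness of that inclusion being Proposition \ref{Prop:Ext_vanishing}), essential surjectivity of $\varphi$ forces $K^b(\Ocat\operatorname{-tilt})\xrightarrow{\sim}K^b(\Hilt^G(\U)_0)$, and hence, by a standard Karoubian/Krull--Schmidt argument, essential surjectivity of $\Ocat\operatorname{-tilt}\to\Hilt^G(\U)_0$ itself. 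The hard input is thus the essential surjectivity of the derived functor, which the paper obtains from the $K_0$-computation (Proposition \ref{Prop:perverse_K0}) together with the fact that $\varphi$ sends simples to simples --- ingredients your argument for this step does not use. If you want to salvage your outline, replace the translation-functor decomposition by this derived-category argument; as written, the proposal is circular at its central point.
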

\begin{proof}
 Recall that $\varphi:K^b(\Ocat\operatorname{-tilt})\cong D^b(\Ocat)\hookrightarrow D^b(\HC^G(\U)_0)$
  from Proposition \ref{Prop:exactness_property} is an equivalence (Theorem \ref{Thm:final_equivalence}) that factors through
the full subcategory $K^b(\Hilt^G(\U)_0)\subset D^b(\HC^G(\U)_0)$.
Hence $K^b(\Ocat\operatorname{-tilt})\xrightarrow{\sim} K^b(\Hilt^G(\U)_0)$.
Recall that the latter functor is induced by the full embedding $\Ocat\operatorname{-tilt}
\hookrightarrow \Hilt^G(\U)_0$. It follows that this full embedding is an equivalence.

Now let $T_{\Ring}$ be an indecomposable object in $\Hilt^G(\U^{\wedge_0})$.
Then $T:=T_{\Ring}\otimes_{\Ring}\F$ is also indecomposable, this follows
from (1) of Corollary \ref{Cor:flat_hom}. So we have
an automatically indecomposable object $T'_{\Ring}\in \Ocat_{\Ring}\operatorname{-tilt}$
such that  $\varphi(T'_{\Ring}\otimes_{\Ring}\F)\cong T$. Then
$\varphi_{\Ring}(T'_{\Ring})\cong T_{\Ring}$. So the full embedding $\varphi_\Ring: \Ocat_{\Ring}\operatorname{-tilt}\rightarrow \Hilt^G(\U^{\wedge_0})$ is essentially
surjective and hence an equivalence.
\end{proof}

We also have the following equivalence between derived categories, Theorem \ref{Thm:derived}.

\begin{Thm}\label{Thm:derived_equivalence}
The equivalence $\varphi_{\Ring}$ from Theorem \ref{Thm:HC-tilting_equiv}
extends to $$D^b(\Ocat_{\Ring})\cong K^b(\SBim^\wedge)
\xrightarrow{\sim} D^b(\HC^G(\U^{\wedge_0})).$$
\end{Thm}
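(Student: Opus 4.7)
The plan is to upgrade the equivalence $\varphi_\Ring$ from Theorem \ref{Thm:HC-tilting_equiv} to a derived equivalence by a specialization-and-lift argument modeled on the proof of Theorem \ref{Thm:final_equivalence}. First, combining Theorem \ref{Thm:HC-tilting_equiv} with the vanishing of higher Ext groups between HC-tilting bimodules (Corollary \ref{Cor:flat_hom}(3)), the natural functor
\[
\Phi_\Ring: K^b(\SBim^\wedge) \xrightarrow{\varphi_\Ring} K^b(\Hilt^G(\U^{\wedge_0})) \hookrightarrow D^b(\HC^G(\U^{\wedge_0}))
\]
is a fully faithful exact monoidal functor. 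Since $D^b(\Ocat_\Ring)\cong K^b(\SBim^\wedge)$ is provided by the $\Ring$-version of Lemma \ref{Cor:exactness_Theta}, the problem reduces to showing essential surjectivity of $\Phi_\Ring$.

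For essential surjectivity I would specialize and then lift. The functor $\Phi_\Ring$ is compatible with derived specialization $\otimes_\Ring^L \F$: by Corollary \ref{Cor:flat_hom}(2) specialization gives a bijection on indecomposables between $\Hilt^G(\U^{\wedge_0})$ and $\Hilt^G(\U)_0$, and by Corollary \ref{Cor:flat_hom}(1) the Hom groups in $\Hilt^G(\U^{\wedge_0})$ are $\Ring$-flat and specialize correctly, so the induced functor on specializations is $\Phi_\F: D^b(\Ocat)\to D^b(\HC^G(\U)_0)$, which is the equivalence of Theorem \ref{Thm:final_equivalence}. Given $M\in \HC^G(\U^{\wedge_0})$, set $M_0:=M\otimes_\Ring^L\F$ and use $\Phi_\F^{-1}$ to find a finite complex $T_0^\bullet$ in $\Hilt^G(\U)_0$ with $\Phi_\F(T_0^\bullet)\simeq M_0$. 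I would then lift each term $T_0^i$ to $T^i\in\Hilt^G(\U^{\wedge_0})$ via Corollary \ref{Cor:flat_hom}(2), lift each differential $d_0^i$ to some $d^i:T^i\to T^{i+1}$ via Corollary \ref{Cor:flat_hom}(1), and inductively correct the lifts modulo powers of $\mathfrak{m}_\Ring$ so as to restore the complex condition $d^2=0$; this is possible because $\Ring$ is complete and the Hom modules are $\Ring$-flat, so the successive obstructions, which a priori lie in $\mathfrak{m}_\Ring^n\cdot\Hom(T^i,T^{i+2})$, can be killed in sequence.

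The main obstacle is the last step: lift the quasi-isomorphism $\Phi_\F(T_0^\bullet)\simeq M_0$ to a morphism $f:\Phi_\Ring(T^\bullet)\to M$ in $D^b(\HC^G(\U^{\wedge_0}))$, and verify that $f$ is itself a quasi-isomorphism. The existence of $f$ is another obstruction-theoretic lift across $\otimes_\Ring^L\F$, essentially the same mechanism as for the differentials. To prove $f$ is a quasi-isomorphism I would apply a derived Nakayama lemma to its cone: $\Phi_\Ring(T^\bullet)$ is a bounded complex of $\Ring$-flat objects, and $\U^{\wedge_0}$ is $\mathfrak{m}_\Ring$-adically complete and $\Ring$-flat, so the cone has bounded Tor-amplitude over $\Ring$ and its vanishing modulo $\mathfrak{m}_\Ring$ forces it to vanish. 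Making this Nakayama step fully rigorous — in particular controlling the behaviour of the a priori non-flat object $M$ under $\Ring$-completion, and ruling out inverse-limit pathologies when iterating through the unbounded poset $W^{ea}$ — is the subtlest part of the argument and what I expect will require the most care.
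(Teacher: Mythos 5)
Your reduction to essential surjectivity (full faithfulness of $\Phi_\Ring$ via Corollary \ref{Cor:flat_hom}(3) and the identification $D^b(\Ocat_\Ring)\cong K^b(\SBim^\wedge)$) matches the paper, but the lifting step has a genuine gap. Lifting a bounded complex $T_0^\bullet$ of objects of $\Hilt^G(\U)_0$ to a complex over $\Ring$ is an obstructed problem: after lifting the terms (Corollary \ref{Cor:flat_hom}(2)) and the differentials (Corollary \ref{Cor:flat_hom}(1)) arbitrarily, the failure of $d^2=0$ at order $n$ is a degree-$2$ cocycle in $\End^\bullet(T_0^\bullet)\otimes\mathfrak{m}^n/\mathfrak{m}^{n+1}$, and the obstruction to correcting the lift is its class in $H^2(\End^\bullet(T_0^\bullet))\cong \Ext^2_{D^b(\HC^G(\U)_0)}(M_0,M_0)\otimes\mathfrak{m}^n/\mathfrak{m}^{n+1}$. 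Completeness of $\Ring$ and flatness of the Hom modules let you iterate \emph{once each obstruction vanishes}; they do not make the obstructions vanish, and $\Ext^2(M_0,M_0)$ is nonzero in general (this is the same phenomenon as the obstruction to lifting a modular representation to characteristic zero). You cannot simply invoke the existence of $M$ as a lift of the object $M_0$ to kill the class, because translating that into the existence of a lift of the \emph{complex of tiltings} presupposes essential surjectivity of $\Phi$ over $\Ring/\mathfrak{m}^{n+1}$ — essentially the statement being proved. The subsequent lift of the quasi-isomorphism $\Phi_\F(T_0^\bullet)\simeq M_0$ to a morphism over $\Ring$ is obstructed in the same way by $\Ext^1$-classes at each order, so it cannot be waved through either; only your final derived Nakayama step (via closedness and $G^{(1)}$-stability of supports) is sound as stated.

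The paper avoids obstruction theory entirely. It inducts on the number of parameters $f_1,\dots,f_k$ of $\Ring$ annihilating $M$, one variable at a time, using that the image of $\varphi_\Ring$ is closed under cones to pass from ``annihilated by'' to ``acting nilpotently'' (the base case over $\F$ is handled by canonical Koszul resolutions of the $T_\Ring(x)$, which need no correction). In the induction step it never lifts a complex: it picks a finite poset ideal $I$ with $M/Mf$ in the image of $K^b(\Ocat_{\Ring}(I)\operatorname{-tilt})$, forms the functorial approximation triangle $\mathcal{G}\mathcal{F}M\to M\to N$ with $\mathcal{G}=T\otimes^L_{\Asf_T}\bullet$, whose first term lies in the image \emph{by construction}, and then kills $N$ by a one-variable Nakayama argument. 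If you want to keep the specialize-and-lift framework, you must replace the deformation of $T_0^\bullet$ by some such functorial device that produces a genuine object over $\Ring$ mapping to $M$.
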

\begin{proof}
Thanks to Proposition \ref{Prop:BR} and (3) of Lemma \ref{Lem:HC-tilting_basic},
we have a full embedding $\SBim^{\wedge}\hookrightarrow \Hilt^G(\U^{\wedge_0})$.
It extends to $D^b(\Ocat_\Ring)\xrightarrow{\sim} K^b(\Hilt^G(\U^{\wedge_0}))$.
And thanks to (3) of Corollary \ref{Cor:flat_hom}, we have a full embedding
$K^b(\Hilt^G(\U^{\wedge_0}))\hookrightarrow D^b(\HC^G(\U^{\wedge_0}))$.
Denote the composite full embedding $D^b(\Ocat_\Ring)\hookrightarrow D^b(\HC^G(\U^{\wedge_0}))$
also by $\varphi_\Ring$.

So we need to show that every object $M_{\Ring}\in \HC^G(\U^{\wedge_0})$ lies in the image,
equivalently, thanks to Theorem \ref{Thm:HC-tilting_equiv}, in the full subcategory
$K^b(\Hilt^G(\U^{\wedge_0}))\subset D^b(\HC^G(\U^{\wedge_0}))$.
Let $f_1,\ldots,f_k$ denote parameters for $\Ring$, i.e., $\Ring=\F[[f_1,\ldots,f_k]]$.
We will use the ascending induction on $i$ to show that any $M\in \HC^G(\U^{\wedge_0})$
annihilated by $f_1,\ldots,f_{k-i}$ lies in
$K^b(\Hilt^G(\U^{\wedge_0}))$. We note that since $\varphi_{\Ring}$
is a full embedding, the image is closed under taking cones.

The base is $i=0$. Here $M\in \HC^G(\U)_0$. Then, thanks to Theorem
\ref{Thm:final_equivalence}, there is an object
$M'\in K^b(\Ocat\operatorname{-tilt})$ with $\varphi(M')\cong M$.
We can lift $M'$ to an object of $K^b(\Hilt^G(\U^{\wedge_0}))$
by replacing every term in the complex $M'$ by its Koszul resolution.
Denote the resulting object in $K^b(\Hilt^G(\U^{\wedge_0}))$ by $\tilde{M}'$.
The images of $\tilde{M}'$ and $M'$ in $D^b(\Ocat_{\Ring})$ are isomorphic.
It follows that $\varphi_{\Ring}(\tilde{M}')\cong M$.

Now suppose that we know that any HC bimodule
annihilated by $f_1,\ldots,f_{k-i}$ lies in the image of $K^b(\Hilt^G(\U^{\wedge_0}))$.
Since the image of  $K^b(\Hilt^G(\U^{\wedge_0}))$ is closed under taking
cones, all objects in $\HC^G(\U^{\wedge_0})$, where $f_1,\ldots,f_{k-i}$ act
nilpotently, lie in the image. Now let $M$ be an object in $\HC^G(\U^{\wedge_0})$
annihilated by $f_1,\ldots,f_{k-i-1}$. We want to show that it lies in the image of
$K^b(\Hilt^G(\U^{\wedge_0}))$. Set $f:=f_{k-i}$. The $f$-torsion part of $M$
lies in the image of $K^b(\Hilt^G(\U^{\wedge_0}))$. So we can assume that $M$
is torsion free over $\F[f]$.

For a finite poset ideal $I\subset W^{ea}$, we write $\Ocat_{\Ring}(I)$ for the full subcategory
of $\Ocat_{\Ring}$ generated by $T_{\Ring}(y)$ for $y\in I$.
This is a highest weight subcategory. Since $M/Mf$ lies in the image of $K^b(\Ocat_{\Ring}\operatorname{-tilt})$ by our inductive assumption,  we can find $I$ such that
$M/Mf$ lies in the image of $K^b(\Ocat_{\Ring}(I)\operatorname{-tilt})$.
We are going to show that $M$ actually lies in  the image of
$K^b(\Ocat_{\Ring}(I)\operatorname{-tilt})$.

Let $T$ denote the direct sum of all indecomposable tiltings in the image of
$\Ocat_{\Ring}(I)\operatorname{-tilt}$ in $\Hilt^G(\U)_0$. Set
$$\Asf_T:=\End_{\Ocat_{\Ring}}(T)^{opp}.$$ This $\Ring$-algebra is a free finite rank
$\Ring$-module. It has finite homological dimension because
$\Asf_T\operatorname{-mod}$ is a highest weight category (with poset $I^{opp}$).
We have the $\Ring$-linear functor $\mathcal{F}:=R\Hom(T,\bullet):D^b(\HC^G(\U^{\wedge_0}))
\rightarrow D^b(\Asf_T\operatorname{-mod})$. Since $\Asf$ has finite homological
dimension, the functor $\mathcal{F}$ has the left adjoint and  right inverse $\mathcal{G}:=
T\otimes^L_{\Asf_T}\bullet$. Apply the adjunction counit to $M$ and complete the resulting
morphism to an exact triangle
\begin{equation}\label{eq:exact_triangle}
\mathcal{G}\mathcal{F}M\rightarrow M\rightarrow N\xrightarrow{+1}.
\end{equation}
Note that by the construction, we have $\mathcal{F}(N)=0$. Now apply $\bullet\otimes^L_{\F[f]}\F$
to (\ref{eq:exact_triangle}). Since $\mathcal{F},\mathcal{G}$ are $\Ring$-linear and hence
$\F[f]$-linear and $M$ is flat over $\F[f]$, we get
$$\mathcal{G}\mathcal{F}(M/Mf)\rightarrow M/Mf\rightarrow N\otimes^L_{\F[f]}\F\xrightarrow{+1}.$$
By our assumption, $M/Mf$ lies in the essential image $K^b(\Ocat_{\Ring}(I)\operatorname{-tilt})$,
which coincides with the essential image of $\mathcal{G}$. It follows that
$\mathcal{G}\mathcal{F}(M/Mf)\xrightarrow{\sim} M/Mf$. Hence $N\otimes^L_{\F[f]}\F=0$.
We claim that the last equality implies $N=0$. Indeed, let $j$ be maximal such that $H^j(N)\neq \{0\}$.
Then $H^j(N\otimes^L_{\F[f]}\F)=H^j(N)/ H^j(N)f$. Note that $H^j(N)$ is a finitely generated $\U^{\wedge_0}$-module. Its support in the spectrum of the center is closed,
hence $H^j(N)/ H^j(N)f\neq \{0\}$. This contradicts
$N\otimes^L_{\F[f]}\F=0$ and finishes the induction step and therefore the proof.
\end{proof}

%

\section{Modular category $\Ocat$}
\subsection{Definition and equivalent characterization}\label{SS_O_cl_basic}
Fix a Borel subgroup $B\subset G$.
We consider the category
$\Ocat^{cl}$ (``cl'' from ``classical'' as opposed to the Soergel
type categories from Section \ref{S_Soergel_O})
of all finitely generated $\g$-modules with a strongly $B$-equivariant
structure.

For example, for every $\lambda\in \Lambda$, we have the Verma module
$\Delta^{cl}(\lambda)\in \Ocat^{cl}$. As usual, it is defined as
$U(\g)\otimes_{U(\mathfrak{b})}\F_\lambda$, where $B$ acts on the 1-dimensional space
$\F_\lambda$ by the character $\lambda$, while on $\Delta^{cl}(\lambda)$
we have the tensor product action.

We remark that $\Delta^{cl}(\lambda)$ has infinite length,
and the dual Verma module $\nabla^{cl}(\lambda)$ (constructed in the usual fashion) is not in
$\Ocat^{cl}$ but rather in its ind-completion: it is the inductive limit of the duals of
finite dimensional quotients of $\Delta^{cl}(\lambda)$.

As usual, $\Ocat^{cl}$ decomposes as the sum of infinitesimal blocks $\bigoplus_{[\lambda]}\Ocat^{[\lambda]}$, where the sum is taken over
the $W^{ea}$-orbits in $\Lambda$ (with the action of the lattice part
rescaled $p$ times). The objects in $\Ocat^{[\lambda]}$
are exactly the modules in $\Ocat^{cl}$ with generalized central character
$\lambda \operatorname{mod} p$.

Here is an equivalent construction. Recall that $\mathcal{B}$ stands for the flag variety $G/B$.
So we can consider the category $\Coh^{G}(D_{\mathcal{B}})$ of weakly equivariant
$D_{\B}$-modules (that are quasi-coherent over $\mathcal{O}_{\mathcal{B}}$ and are locally
finitely generated over $D_{\mathcal{B}}$). The following result is classical.

\begin{Lem}\label{eq:O_D_mod}
Taking the fiber at $1B\in \mathcal{B}$ defines an equivalence
$$\Coh^{G}(D_{\mathcal{B}})\xrightarrow{\sim}\Ocat^{cl}.$$
\end{Lem}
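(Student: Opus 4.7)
The plan is to exhibit an explicit quasi-inverse to the fiber functor, and reduce the verification to the classical description of $G$-equivariant quasi-coherent sheaves on the homogeneous space $\B = G/B$.

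First I would make the fiber functor $\Phi\colon \Coh^{G}(D_\B)\to \Ocat^{cl}$ precise. For $\mathcal{M}\in \Coh^{G}(D_\B)$ the $\mathcal{O}_\B$-module fiber $\Phi(\mathcal{M}):=\mathcal{M}_{1B}/\mathfrak{m}_{1B}\mathcal{M}_{1B}$ carries the restriction to the stabilizer $B$ of the $G$-action, making it a $B$-module. To equip it with a compatible $\g$-action I would use the standard observation that on sections of $\mathcal{M}$ there are two natural $\g$-actions: the action $\alpha$ obtained by differentiating the $G$-action, and the action $\beta$ coming from $\g\subset D_\B$ via the $D$-module structure. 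Since both satisfy the same Leibniz rule $\xi\cdot(fm)=\xi(f)m+f(\xi\cdot m)$ for $f\in\mathcal{O}_\B$, the difference $\gamma:=\alpha-\beta$ is $\mathcal{O}_\B$-linear, and therefore descends to an operator on the fiber. A short computation using the weak-equivariance identity $[\alpha(\xi_1),\beta(\xi_2)]=\beta([\xi_1,\xi_2])$ shows that $\gamma$ is a Lie algebra homomorphism, and compatibility with the $B$-action is automatic from the construction.

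For the inverse $\Psi\colon \Ocat^{cl}\to \Coh^{G}(D_\B)$ I would use the associated-bundle construction $\Psi(M):=G\times^{B}M$, realised concretely as sections of $\mathcal{O}_G\otimes_\F M$ invariant under the diagonal right $B$-action. This sheaf carries a weak $G$-equivariance via left translation on $G$, and the $\g$-action on $M$ together with the infinitesimal right translation on $G$ assembles into a $D_\B$-action in the usual way. Checking $\Phi\Psi\cong\mathrm{id}_{\Ocat^{cl}}$ is then direct: the fiber of $G\times^{B}M$ at $1B$ is $M$, and a sign check identifies $\gamma$ with the original $\g$-action. For $\Psi\Phi\cong\mathrm{id}$ I would first forget the $D_\B$-structure and invoke the classical equivalence between $G$-equivariant quasi-coherent sheaves on $G/B$ and $B$-representations; the $D_\B$-structure on either side is then pinned down by the $\g$-action on the fiber, which is precisely $\gamma$.

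The main obstacle is essentially bookkeeping rather than anything conceptual: one has to track left versus right translation and the relative signs of $\alpha$ and $\beta$ carefully enough that the induced $B$-equivariance on the fiber comes out strong (not merely weak) and that the bracket identity for $\gamma$ matches the original bracket on $\g$ (rather than its opposite). Once the conventions are fixed, finite generation of $\Phi(\mathcal{M})$ as a $\g$-module follows from the assumption that $\mathcal{M}$ is locally finitely generated over $D_\B$ together with the fact that $G$ acts transitively on $\B$, and the verification is complete.
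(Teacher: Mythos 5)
The paper states this lemma without proof, simply as a classical fact, and your argument is precisely the standard one (the fiber functor with the $\mathcal{O}_{\mathcal{B}}$-linear difference $\gamma=\alpha-\beta$ of the two $\g$-actions, quasi-inverse given by the associated bundle $G\times^B M$). Your outline is correct; the only points that deserve explicit verification in a full write-up are the ones you already flag, namely that $\beta(\xi)$ kills the fiber at $1B$ for $\xi\in\mathfrak{b}$ (whence strong, not just weak, $B$-equivariance) and that strong equivariance of $M$ is exactly what makes the $D_{\mathcal{B}}$-action on $G\times^B M$ well defined.
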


For example, the regular module $D_{\mathcal{B}}$ corresponds to the Verma module $\Delta^{cl}(0)$.

We will need a slight modification of this equivalence. Let $K_{\mathcal{\B}}$ denote the canonical
bundle on $\mathcal{B}$ with its natural $G$-equivariant structure. The functor
$K_{\mathcal{B}}\otimes_{\mathcal{O}_{\mathcal{\B}}}\bullet$ defines an equivalence between $\Coh^{G}(D_{\mathcal{B}})$
and $\Coh^{G}(D_{\mathcal{B}}^{opp})$. So we get an equivalence
$$\Coh^{G}(D_{\mathcal{B}}^{opp})\xrightarrow{\sim}\Ocat^{cl}.$$
Under this equivalence the regular right module $D_{\mathcal{B}}$ goes to
the Verma module $\Delta^{cl}(-2\rho)$.

\subsection{Main equivalence}
Our main result concerning $\Ocat^{[0]}$ is as follows.
Let $\pi: \tilde{\Nilp}^{(1)}\rightarrow \g^{(1)*}$ be the Springer map.
Recall, Section \ref{SS:tilt_NC}, that we have the $\F[\g^{*(1)}]$-algebra $\Acal_\Ring$ acted on by
$G^{(1)}$. So we can consider its
pullback $\pi^*\Acal_\Ring$ and the category $\Coh^{G^{(1)}}(\pi^*\Acal_\Ring)$.

\begin{Thm}\label{Thm:Ocat_cl_main}
We have an equivalence $\Ocat^{[0]}\xrightarrow{\sim} \Coh^{G^{(1)}}(\pi^*\Acal_\Ring)$
of abelian categories.
\end{Thm}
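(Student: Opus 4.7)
The plan is to compose two abelian equivalences: a Beilinson--Bernstein--Mirkovic--Rumynin style localization carrying $\Ocat^{[0]}$ onto equivariant Azumaya modules on the Grothendieck--Springer resolution, followed by the Bezrukavnikov--Mirkovic tilting bundle equivalence translating those into $\pi^*\Acal_\Ring$-modules. Because both sides carry natural t-structures, the goal is to realize this composition as an abelian equivalence, not merely a derived one.

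First I would localize. An object $M\in \Ocat^{[0]}$ is a strongly $B$-equivariant finitely generated $\U$-module whose generalized Harish-Chandra central character is $0$. The HC completion at $0$ forces the $p$-center $\F[\g^{(1)*}]$ to act through its completion along the nilpotent cone, so $M$ is naturally a strongly $B$-equivariant $\U^{\wedge_0,\g}$-module. Applying the derived localization equivalence (\ref{eq:derived_equivalence_0}) given by the splitting bundle $\Ecal'$, these correspond to weakly $G$-equivariant $\Dcal^{\wedge_0,\g}$-modules on $\tilde{\g}^{(1),\wedge_\g}$. The strong $B$-equivariance on the $\U$-side, once transported through the localization on $\B=G/B$, becomes naturally a strong $G$-equivariance condition on the $\tilde{\g}^{(1)}$-side, just as in the classical Beilinson--Bernstein dictionary recalled in Lemma \ref{eq:O_D_mod}. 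Moreover, the support condition coming from the HC completion forces the resulting sheaves to live on $\tilde{\Nilp}^{(1)}\subset \tilde{\g}^{(1),\wedge_\g}$.

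Next I would pass to $\Acal_\Ring$-modules. By Lemma \ref{Lem:same_summands}, $\Tcal_\h^{\wedge_\g}$ has the same indecomposable summands as $\Ecal'$, so the tilting functor $R\Hom(\Tcal_\h,\bullet)$ and the BMR splitting functor identify the same subcategory of coherent sheaves on $\tilde{\g}^{(1),\wedge_\g}$. Restricting to objects with support on $\tilde{\Nilp}^{(1)}$ and using that $\pi^*\Acal_\Ring$ is the natural pullback of the endomorphism algebra of the tilting bundle along the Springer map, one obtains an equivalence of our category with modules over $\pi^*\Acal_\Ring$ on $\tilde{\Nilp}^{(1)}$ carrying the appropriate equivariance. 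The translation of strong $G$-equivariance on the $D$-module side into $G^{(1)}$-equivariance on the $\Acal_\Ring$-side is carried out exactly as in step (II) of the proof of Theorem \ref{Thm:derived_loc_HC}: the tilting bundle and the sheaves $\Acal_\Ring,\pi^*\Acal_\Ring$ are built from Frobenius-twisted data, so for flat objects ``strong $G$'' on the $\U$-side matches ``$G^{(1)}$'' on the tilted side.

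Finally, I would verify that the composite equivalence is t-exact with respect to the standard t-structures on $\Ocat^{[0]}$ and $\Coh^{G^{(1)}}(\pi^*\Acal_\Ring)$, so that it restricts to an equivalence of the abelian hearts. T-exactness of $R\Gamma(\Tcal_\h\otimes\bullet)$ follows from the tilting property (no higher self-Ext), and t-exactness of the localization step is a standard feature of BMR at regular central character. The main obstacle, I expect, is this last compatibility of equivariance structures: translating the integrable $B$-action on $M$ into the correct $G^{(1)}$-equivariant datum on $\pi^*\Acal_\Ring$-modules, without losing information when passing to the completed, nilpotent-supported setting. Arguments analogous to those in the proof of (II) of Theorem \ref{Thm:derived_loc_HC}, reducing to a nonempty open subscheme in $\g^{*(1),reg}_\Ring$ and then extending by flatness, should resolve this point and complete the proof.
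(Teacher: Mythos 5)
Your overall architecture (localize, then pass through the tilting bundle to $\pi^*\Acal_\Ring$-modules) matches the paper's in outline, but the final paragraph contains the essential gap: neither of the two functors you compose is t-exact for the standard t-structures, so the claimed reduction to ``standard features'' does not work. The BMR localization equivalence (\ref{eq:derived_loc_equiv}) is only a derived equivalence; $R\Gamma$ of a coherent $\Dcal^{\wedge_0}$-module on $\tilde{\Nilp}^{(1)}$ (or on $\tilde{\g}^{(1),\wedge_\g}$) has higher cohomology in general because the Springer map is projective, not affine, and the image of the abelian heart is an \emph{exotic} t-structure on the coherent side, not the tautological one. Likewise, the tilting property of $\Tcal_\h$ gives $\Ext^{>0}(\Tcal_\h,\Tcal_\h)=0$, which makes $R\Gamma(\Tcal_\h\otimes\bullet)$ a derived equivalence but emphatically not a t-exact one: the functor $R\Gamma(\mathcal{T}^*\otimes\bullet)$ in (\ref{eq:der_equiv_A_new}) fails to be t-exact for exactly the same reason. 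The content of Theorem \ref{Thm:Ocat_cl_main} is precisely that these two failures of t-exactness cancel in the composite, and that cannot be seen one functor at a time.

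The paper's proof handles this by characterizing both $D^{b,\leqslant 0}(\Ocat^{[0]})$ and $D^{b,\leqslant 0}(\Coh^{G^{(1)}}(\pi^*\Acal_\Ring))$, after transport to $D^b(\Coh^{G^{(1)}}\St^{(1)})$, by the same asymptotic condition $(\heartsuit)$: an object lies in the $\leqslant 0$ part if and only if its twists by $\pi_2^*\Ocal(n\lambda)$ for $n\gg 0$ (with $\lambda$ strictly dominant) lie in $D^{b,\leqslant 0}$ of the Harish-Chandra bimodule category. Proving this requires Serre vanishing on $\tilde{\Nilp}^{(1)}$ on both sides, plus a nontrivial identification (Steps 3--4 of the paper's proof) of the twist by $\Ocal_{\mathcal{B}}(np\lambda)$ on the $D$-module side with convolution with the bimodule $\mathcal{M}_n=\Gamma(\Ocal_{\mathcal{B}}(np\lambda)\otimes_{\Ocal_{\mathcal{B}}}D_{\mathcal{B}})$ on the HC side, and then with the twist by $\Ocal(n\lambda)$ on the second factor of $\St^{(1)}$. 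None of this is in your proposal, and without it you only obtain a derived equivalence $D^b(\Ocat^{[0]})\xrightarrow{\sim} D^b(\Coh^{G^{(1)}}(\pi^*\Acal_\Ring))$, which is strictly weaker than the statement. A secondary, smaller point: the paper passes through the monoidal HC bimodule category (Proposition \ref{Prop:BG_derived} sends $\Ocat^{[0]}$ to $\HC^G(\U)_0$ via $\otimes^L_{\U_0}\Delta^{cl}(-2\rho)$, which is then matched with $\Acal$-bimodules by the already-proved (\ref{eq:bimod_equiv_abelian_specialized})); your direct localization of single modules would still need an analogue of step (II) of Theorem \ref{Thm:derived_loc_HC} to convert $B$-equivariance into $G^{(1)}$-equivariance, which you acknowledge but do not carry out.
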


We now explain how this equivalence is constructed. First, we establish an
equivalence
\begin{equation}\label{eq:cat_O_equiv1}
D^b(\Ocat^{[0]})\xrightarrow{\sim} D^b(\HC^G(\U)_0).
\end{equation}
It is also known that -- we'll recall why -- that
\begin{equation}\label{eq:cat_O_equiv3}
D^b(\Acal_\Ring\otimes_{\F[\g^{*(1)}]}\Acal^{opp}\operatorname{-mod}^{G^{(1)}})
\xrightarrow{\sim} D^b(\Coh^{G^{(1)}}(\pi^*\Acal_\Ring)).
\end{equation}

Composing (right to left) (\ref{eq:cat_O_equiv1}),
(\ref{eq:bimod_equiv_abelian_specialized}), and (\ref{eq:cat_O_equiv3}), we get
\begin{equation}\label{eq:composed_equiv_O}
D^b(\Ocat^{[0]})\xrightarrow{\sim} D^b(\Coh^{G^{(1)}}(\pi^*\Acal_\Ring)).
\end{equation}
Our last step will be to show that this equivalence is t-exact.

The following proposition (which should be thought of as a characteristic $p$
version of the classical Bernstein-Gelfand equivalence in characteristic $0$)
constructs the quasi-inverse of (\ref{eq:cat_O_equiv1}). It is a more precise version of
Proposition \ref{Prop:O_HC}.

\begin{Prop}\label{Prop:BG_derived}
The functor $\mathcal{M}\mapsto \mathcal{M}\otimes^L_{\U_{0}}\Delta^{cl}(-2\rho)$
is a category equivalence $$D^b(\HC^G(\U)_0)\xrightarrow{\sim} D^b(\Ocat^{cl})$$
that restricts to $D^b(\HC(\U)_0)\xrightarrow{\sim} D^b(\Ocat^{[0]})$.
\end{Prop}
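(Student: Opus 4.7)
Plan: The strategy is to realize the functor as the composition of BMR derived localization and a Beilinson-Bernstein-style identification. By Lemma \ref{eq:O_D_mod} together with the canonical-bundle twist introduced in Section \ref{SS_O_cl_basic}, we have an equivalence $\Coh^G(D_{\B}^{opp}) \xrightarrow{\sim} \Ocat^{cl}$ sending the regular right $D_{\B}$-module to $\Delta^{cl}(-2\rho)$. Passing to the completion at the dot-regular weight $\mu = 0$ refines this to $\Coh^G(D_{\B}^{\wedge_0,opp}) \xrightarrow{\sim} \Ocat^{cl,\wedge_0}$, whose specialization to $\F$ lands in the block $\Ocat^{[0]}$. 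On the other hand, BMR derived localization (\ref{eq:derived_loc_equiv}) at $\mu = 0$, applied to right modules (either via the obvious opp-version of the argument or via the anti-involution of $\U$), produces an equivalence
\begin{equation*}
R\Gamma : D^b(\Coh(\Dcal^{\wedge_0,opp})) \xrightarrow{\sim} D^b(\U^{\wedge_0,opp}\operatorname{-mod}).
\end{equation*}
Since $R\Gamma$ is $G$-equivariant, it refines to weakly $G$-equivariant objects; by the observation recalled at the start of Section \ref{SS_BR_HC}, these objects constitute $\HC^G(\U^{\wedge_0})$. Specializing to $\F$ and combining with the previous equivalence yields a derived equivalence $D^b(\Ocat^{[0]}) \xrightarrow{\sim} D^b(\HC^G(\U)_0)$, whose unspecialized/unblocked analog accounts for the $D^b(\Ocat^{cl})$ statement.

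To identify the inverse of this composite with $\mathcal{M} \mapsto \mathcal{M} \otimes^L_{\U_0} \Delta^{cl}(-2\rho)$, note that the inverse of $R\Gamma$ is the localization functor $\mathcal{M} \mapsto \mathcal{M} \otimes^L_{\U^{\wedge_0}} \Dcal^{\wedge_0,opp}$. Composing with the fiber-at-$1B$ equivalence and using that $\Dcal^{\wedge_0,opp}$, being the localization of the regular bimodule $\U^{\wedge_0}$, corresponds to $\Delta^{cl,\wedge_0}(-2\rho)$, the composition sends $\mathcal{M}$ to $\mathcal{M} \otimes^L_{\U^{\wedge_0}} \Delta^{cl,\wedge_0}(-2\rho)$. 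Specializing $\Ring$ to $\F$ produces the functor of the proposition.

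The main obstacle will be to verify rigorously that adding $G$-equivariance to BMR preserves it being an equivalence at the derived level. The standard route is to combine the $G$-equivariance of $R\Gamma$ with a Barr-Beck style comparison, using that the forgetful functors $\Coh^G \to \Coh$ and $\operatorname{-mod}^G \to \operatorname{-mod}$ are conservative and admit compatible induction left adjoints. A secondary point is matching the infinitesimal block decomposition of $\Ocat^{cl}$ with the central-character decomposition on the bimodule side; this reduces to the description of the center of $\U$ recalled in Section \ref{SS_der_loc} and the compatibility of Harish-Chandra projection with $R\Gamma$.
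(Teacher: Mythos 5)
Your proposal follows essentially the same route as the paper: the right-module version of the BMR localization theorem, lifted to weakly $G$-equivariant categories, composed with the fiber-at-$1B$ equivalence (including the canonical-bundle twist), and the functor is identified as $\bullet\otimes^L_{\U_0}\Delta^{cl}(-2\rho)$ precisely because the regular right module $D_{\mathcal{B}}$ goes to $\Delta^{cl}(-2\rho)$; the paper works directly with $\U_0$ and $D_{\mathcal{B}}$ rather than detouring through the completion $\U^{\wedge_0}$ and specializing, and lifts the adjoint pair of quasi-inverse functors to the equivariant categories directly rather than via a Barr--Beck argument, but these are cosmetic differences. One small correction: the weakly $G$-equivariant right $\U^{\wedge_0}$-modules form $\U^{\wedge_0,opp}\operatorname{-mod}^G=\bigoplus_{\mu}\HC^G(\U^{\wedge_\mu}\operatorname{-}\U^{\wedge_0})$, not $\HC^G(\U^{\wedge_0})$ alone, so passing to the left-character-$0$ summand (matching the block $\Ocat^{[0]}$) is exactly the block-compatibility step you defer to the end, which is also how the paper concludes.
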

\begin{proof}
Thanks to \cite[Theorem 3.2]{BMR} (more, precisely, its straightforward analog for
right modules), we have mutually quasi-inverse equivalences
$$R\Gamma: D^b(\Coh(D_{\mathcal{B}}^{opp}))\rightleftarrows
D^b(\U_{0}^{opp}\operatorname{-mod}): \bullet\otimes^L_{\U_{0}}D_{\mathcal{B}}.$$
The functors lift to an adjoint pair of functors between
the categories of weakly equivariant modules and hence give
mutually quasi-inverse equivalences
$$D^b(\Coh^{G}(D_{\mathcal{B}}^{opp}))\rightleftarrows
D^b(\U_{0}^{opp}\operatorname{-mod}^{G}).$$
Then we have the (t-exact) equivalence
$$D^b(\Coh^{G}(D_{\mathcal{B}}^{opp}))\xrightarrow{\sim} D^b(\Ocat^{cl})$$
of restricting to the point $1B$, see Section \ref{SS_O_cl_basic}.
Since it sends $D_{\mathcal{B}}$ to $\Delta^{cl}(-2\rho)$,
the composition
$$D^b(\U_{0}^{opp}\operatorname{-mod}^{G})\xrightarrow{\sim}
D^b(\Coh^{G}(D_{\mathcal{B}}^{opp}))\xrightarrow{\sim} D^b(\Ocat^{cl})$$
is $\bullet\otimes^L_{\U_{0}}\Delta^{cl}(-2\rho)$. This establishes the first
claim.

To establish that $D^b(\HC^G(\U)_0)\xrightarrow{\sim} D^b(\Ocat^{[0]})$
we notice that the functor $\bullet\otimes^L_{\U_{0}}\Delta^{cl}(-2\rho)$ sends the source to the target
because it preserves generalized central characters.
\end{proof}

In what follows we always identify $D^b(\Ocat^{[0]})$ and $D^b(\HC^G(\U)_0)$ using the equivalences of the lemma.

Now we recall equivalence (\ref{eq:cat_O_equiv3}). We have the equivalence
\begin{equation}\label{eq:derived_equivalence_coh_semicoh}
R\pi_{1*}([\Tcal_{\h}\boxtimes \mathcal{O}_{\tilde{\g}^{(1)}}]|_{\St}\otimes\bullet):
D^b(\Coh^{G^{(1)}}\St^{(1)})\xrightarrow{\sim} D^b(\Coh^{G^{(1)}}(\pi^*\Acal_\Ring),
\end{equation}
where we write $\pi_1$ for the projection $\St^{(1)}\rightarrow \tilde{\g}^{(1)}$,
see \cite[Section 2.2]{BLin}. Thanks to the equivalence (\ref{eq:derived_equiv_bimod_specialized}),
we get an equivalence
\begin{equation}\label{eq:der_equiv_A_new}
R\Gamma(\mathcal{T}^*\otimes\bullet): D^b(\Coh^{G^{(1)}}(\pi^*\Acal_\Ring))\xrightarrow{\sim}
D^b(\Acal_\Ring\otimes_{\F[\g^{*(1)}]}\Acal^{opp}\operatorname{-mod}^{G^{(1)}}).
\end{equation}
Here we view the source category as that of $G^{(1)}$-equivariant $\Acal_\Ring\otimes_{\F[\g^{(1)*}]}
\mathcal{O}_{\tilde{\Nilp}^{(1)}}$-modules, and tensoring with $\mathcal{T}^*$ is in the 2nd factor.

So we get the composed equivalence (\ref{eq:composed_equiv_O}).

\begin{proof}[Proof of Theorem \ref{Thm:Ocat_cl_main}]
To prove the theorem it remains to show that (\ref{eq:composed_equiv_O})
is t-exact with respect to the tautological t-structures. Equivalently,
we need to show that the derived equivalence induced by (\ref{eq:bimod_equiv_abelian_specialized})
intertwines
\begin{itemize}
\item
The image of $D^{b,\leqslant 0}(\Ocat^{[0]})$ (the negative part of the t-structure) in $D^b(\HC^G(\U)_0)$,
\item and the image of $D^{b,\leqslant 0}(\Coh^{G^{(1)}}(\pi^*\Acal_\Ring))$
in  $D^b(\Acal_\Ring\otimes_{\F[\g^{*(1)}]}\Acal^{opp}\operatorname{-mod}^{G^{(1)}})$.
\end{itemize}
This is done in several steps.
Below in the proof we identify all derived categories involved with
$D^b(\Coh^{G^{(1)}}\St^{(1)})$.

{\it Step 1}. Fix a strictly dominant weight $\lambda$ (so that the sheaf $\mathcal{O}(\lambda)$
on $\mathcal{B}$ is ample). We claim that the images of both
$D^{b,\leqslant 0}(\Ocat^{[0]})$ and $D^{b,\leqslant 0}(\Coh^{G^{(1)}}(\pi^*\Acal_\Ring))$
in $D^b(\Coh^{G^{(1)}}\St^{(1)})$ coincide with the full subcategory of all
objects $\mathcal{F}\in D^b(\Coh^{G^{(1)}}\St^{(1)})$ satisfying the following condition:
\begin{itemize}
\item[($\heartsuit$)] There is $n_0\in \mathbb{Z}_{>0}$ such that for all $n>n_0$ we have
$$\mathcal{F}\otimes \pi_2^* \mathcal{O}(n\lambda)\in
D^{b,\leqslant 0}(\Acal_\Ring\otimes_{\F[\g^{*(1)}]}\Acal^{opp}\operatorname{-mod}^{G^{(1)}})=
D^{b,\leqslant 0}(\HC^G(\U)_0).$$
\end{itemize}
 We will see below that this claim is essentially the Serre vanishing
theorem.

{\it Step 2}.
We start by proving the claim  that the image of $D^{b,\leqslant 0}(\Coh^{G^{(1)}}(\pi^*\Acal_\Ring))$
consists of the objects satisfying ($\heartsuit$), where it is easier. 
We write
$\mathcal{H}^i$ for the $i$th cohomology sheaf (for the natural t-structure
on $D^{b}(\Coh^{G^{(1)}}(\pi^*\Acal_\Ring))$).
Take $\mathcal{F}\in D^b(\Coh^{G^{(1)}}(\pi^*\Acal_\Ring))$. We can find $n_0\in \mathbb{Z}_{>0}$
such that $\mathcal{H}^i(\mathcal{F})\otimes \mathcal{O}(n\lambda)$ has no higher cohomology
and is generated by global sections for all $n>n_0$ and all $i$. It follows that $\mathcal{F}\in D^{b,\leqslant 0}(\Coh^{G^{(1)}}(\pi^*\Acal_\Ring))$ if and only if
$$R\Gamma(\mathcal{F}\otimes \mathcal{T}^*\otimes \mathcal{O}(n\lambda))\in D^{b,\leqslant 0}(\Acal_\Ring\otimes_{\F[\g^{*(1)}]}\Acal^{opp}\operatorname{-mod}^{G^{(1)}}), \forall n>n_0.$$
Now the claim that the image of $D^{b,\leqslant 0}(\Coh^{G^{(1)}}(\pi^*\Acal_\Ring))$
consists of all objects satisfying ($\heartsuit$)
follows from two observations. First, the composition of the derived
equivalences (\ref{eq:derived_equivalence_coh_semicoh}) and (\ref{eq:der_equiv_A_new})
is the derived equivalence (\ref{eq:derived_equiv_bimod_specialized}). Second,
(\ref{eq:derived_equivalence_coh_semicoh}) intertwines twisting by
$\pi_2^*(\mathcal{O}(n\lambda))$ in the source with twisting by $\mathcal{O}(n\lambda)$
in the target.

{\it Step 3}.
Now we proceed to proving the claim about the image of $D^{b,\leqslant 0}(\Ocat^{[0]})$: that it
consists of all objects satisfying ($\heartsuit$). Since we have no direct construction
of an equivalence between  $D^{b,\leqslant 0}(\Ocat^{[0]})$ and
$D^b(\Coh^{G^{(1)}}\St^{(1)})$, a proof is somewhat more involved. First, note that
an argument similar to the previous paragraph implies that the image of
$$D^{b,\leqslant 0}(\Ocat^{cl})= D^{b,\leqslant 0}(\operatorname{Coh}^G(D^{opp}_{G/B}))$$
in $D^b(\U_0^{opp}\operatorname{-mod}^G)$ is characterized by the direct analog of ($\heartsuit$)
where we twist with $\mathcal{O}_{\mathcal{B}}(np\lambda)$ for some $n>n_1$.
Now we describe this twist in terms of HC bimodules. Observe, that if $n$ is sufficiently large, then $R\Gamma(\mathcal{O}_{\mathcal{B}}(np\lambda)\otimes_{\mathcal{O}_{\mathcal{B}}}D_{\mathcal{B}})$
is concentrated in homological degree $0$ (again, this is Serre vanishing theorem applied on the variety
$\tilde{\Nilp}^{(1)}$). Also $\Gamma(\mathcal{O}_{\mathcal{B}}(np\lambda)\otimes_{\mathcal{O}_{\mathcal{B}}}D_{\mathcal{B}})$ is an object in $\HC^G(\U_0)$ that will be denoted
by $\mathcal{M}_n$. We remark that
\begin{equation}\label{eq:rgamma_twist}
R\Gamma(\mathcal{F}\otimes_{\mathcal{D}_{\mathcal{B}}}(\mathcal{O}_{\mathcal{B}}(np\lambda)\otimes_{\mathcal{O}_{\mathcal{B}}}D_{\mathcal{B}}))
\cong R\Gamma(\mathcal{F})\otimes^L_{\U_0}\mathcal{M}_n.
\end{equation}
Indeed, set $\mathcal{G}:=R\Gamma(\mathcal{F})$ and write $\operatorname{Loc}$ for the quasi-inverse of $R\Gamma$, the functor $\bullet^L_{\U_0}\mathcal{D}_{\mathcal{B}}$. The isomorphism (\ref{eq:rgamma_twist}) is equivalent to 
$$\operatorname{Loc}(\mathcal{G})\otimes_{\mathcal{D}_{\mathcal{B}}}(\mathcal{O}_{\mathcal{B}}(np\lambda)
\otimes_{\mathcal{O}_{\mathcal{B}}}D_{\mathcal{B}}))\xrightarrow{\sim}
\operatorname{Loc}(\mathcal{G}\otimes_{\U_0}\mathcal{M}_n).$$
The restrictions of the two functors to the category of free $\U_0$-modules are isomorphic.
So they are isomorphic on $D^-(\U_0^{opp}\operatorname{-mod})$ and hence
on $D^b(\U_0^{opp}\operatorname{-mod}^G)$. 

Thanks to (\ref{eq:rgamma_twist}),  the image of $D^{b,\leqslant 0}(\operatorname{Coh}^G(D^{opp}_{G/B}))$
in $D^b(\U_0^{opp}\operatorname{-mod}^G)$ coincides with the full subcategory of all objects $\mathcal{F}$
such that $\mathcal{F}\otimes^L_{\U_0}\mathcal{M}_n\in D^{b,\leqslant 0}(\U_0^{opp}\operatorname{-mod}^G)$. Note that all objects in
$\U_0^{opp}\operatorname{-mod}^G$ admit finite resolutions by HC modules that are projective
$\U_0^{opp}$-modules (because $\U_0^{opp}$ has finite homological dimension, and every object
in $\U_0^{opp}\operatorname{-mod}^G$ admits an epimorphism from a module of the form $V\otimes \U_0$, where $V$ is a finite dimensional $G$-module). It follows that the endo-functor
$\bullet\otimes^L_{\U_0}\mathcal{M}_n$ of $D^b(\U_0^{opp}\operatorname{-mod}^G)$
is the left derived functor of $\bullet\otimes_{\U_0}\mathcal{M}_n$. Note that
$\bullet\otimes_{\U_0}\mathcal{M}_n$ clearly preserves $\HC^G(\U)_0$.

{\it Step 4}.
We need to determine the endofunctor of $\Acal_\Ring\otimes_{\F[\g^{*(1)}]}\Acal^{opp}\operatorname{-mod}^{G^{(1)}}$ corresponding to the  functor $\bullet\otimes_{\U_0}\mathcal{M}_n$ via the equivalence (\ref{eq:bimod_equiv_abelian_specialized}). Since (\ref{eq:bimod_equiv_abelian_specialized}) is a right module equivalence for the monoidal equivalence
(\ref{eq:bimod_equiv_abelian_specialized2}).

Consider the equivalence
\begin{equation}\label{eq:one_more_equiv1}
D^b(\U_0\otimes^L_{\F[\g^{(1)*}]}\U_0^{opp}\operatorname{-mod}^G)\xrightarrow{\sim}
D^b(\operatorname{Coh}^{G}(\tilde{\Nilp}^{(1)}\times^L_{\g^{*(1)}}\tilde{\Nilp}^{(1)}))\xrightarrow{\sim}
D^b(\Acal\otimes^L_{\F[\g^{(1)*}]}\Acal^{opp}\operatorname{-mod}^G),
\end{equation}
analogous to (\ref{eq:bimod_equiv_main}). Then (\ref{eq:bimod_equiv_abelian_specialized2}) is the restriction of the composed equivalence in (\ref{eq:one_more_equiv1}) to the hearts.
The bimodule $\mathcal{M}_n$ is the image
of the line bundle $\mathcal{O}(n\lambda)$ (with its natural $G$-equivariant structure)
on the diagonal copy of $\tilde{\Nilp}^{(1)}$ under the quasi-inverse of the first equivalence
in (\ref{eq:one_more_equiv1}). So the image of $\mathcal{M}_n$ under the composed equivalence in
(\ref{eq:one_more_equiv1}) is $\Gamma(\mathcal{E}nd(\mathcal{T})\otimes \mathcal{O}(n\lambda))$.
Our conclusion is that  the endofunctor of $\Acal_\Ring\otimes_{\F[\g^{*(1)}]}\Acal^{opp}\operatorname{-mod}^{G}$  corresponding to
the endofunctor $\bullet\otimes_{\U_0}\mathcal{M}_n$ of $\U\otimes_{\F[\g^{*(1)}]}\U_0^{opp}\operatorname{-mod}^G$
is $$\bullet\otimes_{\Acal}\Gamma(\mathcal{E}nd(\mathcal{T})\otimes \mathcal{O}(n\lambda)).$$

Under the derived equivalence
$$D^b(\Coh^{G^{(1)}}(\tilde{\Nilp}^{(1)}\times^L_{\g^{(1)*}}\tilde{\Nilp}^{(1)}))\xrightarrow{\sim}
D^b(\Acal\otimes^L_{\F[\g^{(1)*}]}\Acal^{opp}\operatorname{-mod}^{G^{(1)}})$$
the bimodule $\Gamma(\mathcal{E}nd(\mathcal{T})\otimes \mathcal{O}(n\lambda))$  corresponds still to $\mathcal{O}(n\lambda)$ (with its natural
$G^{(1)}$-equivariant structure) on the diagonal copy of $\tilde{\Nilp}^{(1)}$.
The convolution with this sheaf on the right, an endofunctor of
$D^b(\Coh^{G^{(1)}}\St^{(1)})$ is the twist by $\mathcal{O}(n\lambda)$ in the second
component.

So, under the equivalence $D^b(\HC^G(\U_0))\xrightarrow{\sim}D^b(\Coh^{G^{(1)}}\St^{(1)})$
the functor $\otimes^L_{\U_0}\mathcal{M}_n$ becomes the twist with $\mathcal{O}(n\lambda)$ in the second
component.
It follows that under the equivalence
(\ref{eq:HC_derived_specialized}), the image of $D^{b,\leqslant 0}(\Ocat^{[0]})$
in $D^b(\Coh^{G^{(1)}}\St^{(1)})$ consists of all objects satisfying ($\heartsuit$).
This finishes the proof of the theorem.
\end{proof}

\begin{Rem}\label{Rem:lattice_equivariance}
Note that the lattice $\Lambda$ acts on $\Ocat^{[0]}$ (by twisting with characters of $B^{(1)}$)
and on $\Coh^{G^{(1)}}(\pi^*\Acal_\Ring)$ by twisting with the $G^{(1)}$-equivariant line bundles.
The equivalence in Theorem \ref{Thm:Ocat_cl_main} intertwines these actions. Namely, Step 4
of the proof shows that the twists with sufficiently ample line bundles/ sufficiently dominant
characters are intertwined. The claim in general follows.
\end{Rem}

\subsection{Localizations of Verma modules}
Consider the derived equivalence
\begin{equation}\label{eq:O_local}
\mathcal{F}:D^b(\Ocat^{[0]})\xrightarrow{\sim} D^b(\Coh^{G^{(1)}}\St^{(1)}),
\end{equation}
the composition of the quasi-inverse of (\ref{eq:derived_equivalence_coh_semicoh})
and (\ref{eq:composed_equiv_O}). Our task in this  section is to compute the
images of the Verma modules $\Delta^{cl}(\lambda)\in\Ocat^{[0]}$ under $\mathcal{F}$. Note that the set of
highest weights of these Vermas is identified with $W^{ea}$ via $x\mapsto x^{-1}\cdot (-2\rho)$,
so that for $x=wt_\lambda$ we have $x^{-1}\cdot (-2\rho)=-w^{-1}\rho-\rho-p\lambda$.

To state the answer, we need to recall the braid group action from \cite{BR_braid}.
The first step is to construct a homomorphism from the extended affine braid group
$\mathsf{Br}^{ea}$ to the group of isomorphism classes of invertible objects
in $D^b(\Coh^G(\St_\h))$ (of course, the same holds after applying the Frobenius twist).
The homomorphism sends $\lambda\in \Lambda\subset \mathsf{Br}^{ea}$
to the sheaf $\mathcal{O}(-\lambda)$ on the diagonal copy of $\tilde{\g}$
(to be denoted by $\tilde{\g}_\Delta$).

The images of the generators $T_s$ of the finite braid group
inside $\mathsf{Br}^{ea}$ are determined as follows. Let
$\g^{*,rs}$ denote the locus of regular semisimple elements
and $\St_\h^{rs}$ be its preimage in $\St_\h$. For a simple reflection $s\in W$ consider the locus in
$\St_\h^{rs}$, where the two Borel subalgebras are in relative position $s$. Denote it by $Z_{s}^{rs}$.
Let $Z_{s}$ denote the Zariski closure of $Z_{s}^{rs}$ in
$\St_\h$. The element $T_s$ is sent to $\Str_{Z_{s}}$.

We will need an exact sequence obtained in \cite[Section 1.10]{BR_braid}.
For a simple reflection $s$ in $W$ we consider the scheme $\tilde{\g}_{s}$
constructed as follows. Let $\mathcal{P}_{s}$ denote the partial flag variety,
the quotient of $G$ by the minimal parabolic subgroup $P_{s}$ corresponding to $s$.
It is a closed subscheme in $\g^*\times \mathcal{P}_{s}$ consisting
of all pairs $(\alpha,\mathcal{F})$ such that the element of $\g$ corresponding
to $\alpha$ (under the identification coming from the Killing form) lies in the parabolic
subalgebra corresponding to $\mathcal{F}$. Note that we have a natural morphism
$\tilde{\g}\rightarrow \tilde{\g}_{s}$ and so can form the fiber product
$\tilde{\g}\times_{\tilde{\g}_{s}}\tilde{\g}$.
By \cite[(1.10.1)]{BR_braid}, we get a short exact sequence of coherent sheaves
\begin{equation}\label{eq:SES_braid_generator}
0\rightarrow \Str_{\tilde{\g}_{\Delta}}\rightarrow
\Str_{\tilde{\g}\times_{\tilde{\g}_s}\tilde{\g}}
\rightarrow \Str_{Z_{s}}\rightarrow 0.
\end{equation}

Note that $D^b(\Coh^{G}\St_\h)$
acts on $D^b(\Coh^{G}\St)$ by convolutions. This gives the first (left) action
of $\Br^{ea}$ on $D^b(\Coh^{G}\St)$. We remark that in this construction we can replace
$\St_\h$ with its completed version $\St_\Ring$.

We also have a commuting right action. Consider the derived scheme $\tilde{\Nilp}\times^L_{\g^*}\tilde{\Nilp}$.
It makes sense to speak about the derived category of $G$-equivariant
coherent sheaves on this scheme, to be denoted by
$D^b(\Coh^{G}\tilde{\Nilp}\times^L_{\g^*}\tilde{\Nilp})$.
It acts on $D^b(\Coh^{G}\St)$ by convolutions from the right.
Following \cite{BR_braid}, there is a homomorphism from $\Br^{ea}$ to the group of
invertible objects in $D^b(\Coh^{G}\tilde{\Nilp}\times^L_{\g^*}\tilde{\Nilp})$.
Namely, note that we have a natural morphism from the usual fiber product
$\tilde{\Nilp}\times_{\g^*}\tilde{\Nilp}$ to the derived
tensor product. So we can view objects of $\Coh^{G}(\tilde{\Nilp}\times_{\g^*}\tilde{\Nilp})$
as objects of
$D^b(\Coh^{G}\tilde{\Nilp}\times^L_{\g^*}\tilde{\Nilp})$.

An element $\lambda\in \Lambda\subset \Br^{ea}$ is sent to $\mathcal{O}_{\tilde{\Nilp}_{\Delta}}(-\lambda)$, while $T_s$ is sent to the structure sheaf of the scheme-theoretic
intersection $Z_{s}\cap \St$ (that is actually a subscheme in
$\tilde{\Nilp}\times_{\g^*}\tilde{\Nilp}$).

\begin{Prop}\label{Prop:Verma_localization}
For $x=wt_\lambda$, we have
$$\mathcal{F}\left(\Delta^{cl}(x^{-1}\cdot (-2\rho))\right)\cong T_{w^{-1}}^{-1}\mathcal{O}_{\tilde{\Nilp}^{(1)}_{\Delta}}(-\lambda).$$
\end{Prop}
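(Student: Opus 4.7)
The plan is to establish the formula by decomposing $x = wt_\lambda$ and treating the three ingredients in turn: the base case $x = e$, the translation by $\lambda$, and the finite Weyl group element $w$. First, for $x = e$, I would trace the Verma through $\mathcal{F}$. By Proposition \ref{Prop:BG_derived}, the quasi-inverse of the BG equivalence sends $\Delta^{cl}(-2\rho)$ to the regular bimodule $\U_0 \in \HC^G(\U)_0$ (since $\U_0 \otimes^L_{\U_0} \Delta^{cl}(-2\rho) \cong \Delta^{cl}(-2\rho)$). As $\U_0$ is the tensor unit of $\HC^G(\U)_0$, the monoidal equivalence (\ref{eq:HC_derived_specialized}) must send it to the convolution unit $\mathcal{O}_{\tilde{\Nilp}^{(1)}_\Delta}$ of $D^b(\Coh^{G^{(1)}}\St^{(1)})$, and this identification is preserved by the quasi-inverse of (\ref{eq:derived_equivalence_coh_semicoh}). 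Applying the $\Lambda$-equivariance of $\mathcal{F}$ from Remark \ref{Rem:lattice_equivariance} then yields $\mathcal{F}(\Delta^{cl}(t_\lambda^{-1} \cdot (-2\rho))) \cong \mathcal{O}_{\tilde{\Nilp}^{(1)}_\Delta}(-\lambda)$, settling the formula at $w = e$.

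For general $w \in W$, the key point is that the left wall-crossing action of $\Br^{ea}$ on $D^b(\Ocat^{[0]})$ (inherited from $D^b(\HC^G(\U)_0)$ via Proposition \ref{Prop:BG_derived}) matches, under the Steinberg equivalence (\ref{eq:HC_derived_specialized}), the left braid action on $D^b(\Coh^{G^{(1)}}\St^{(1)})$ of \cite{BR_braid}. The comparison on generators follows from Proposition \ref{Prop:BR} together with the exact sequence (\ref{eq:SES_braid_generator}): $B_s^{HC}$ corresponds to $\Str_{\tilde{\g}\times_{\tilde{\g}_s}\tilde{\g}}$, $\Delta^{HC}_\lambda$ to $\mathcal{O}_{\tilde{\g}_\Delta}(-\lambda)$, and the wall-crossing generator $T_s = \operatorname{cone}(\operatorname{id} \to \Theta_s)$ to $\Str_{Z_s}$. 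Applying $T_{w^{-1}}^{-1}$ to both sides of the lattice identity (with appropriate care for the order in which the Weyl group element and the lattice translation are composed) then reduces the proposition, by induction on $\ell(w)$, to verifying for each simple reflection $s \in W$ that $T_s^{-1}\Delta^{cl}(\mu) \cong \Delta^{cl}(s \cdot \mu)$ in $D^b(\Ocat^{[0]})$.

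The main obstacle is this last identification. The approach I would take is to establish the modular analog of the classical statement that $\Theta_s\Delta^{cl}(\mu)$ admits a standard filtration with factors $\Delta^{cl}(\mu)$ and $\Delta^{cl}(s \cdot \mu)$, with the adjunction unit $\Delta^{cl}(\mu) \to \Theta_s\Delta^{cl}(\mu)$ being (up to a unit) the canonical embedding of the appropriate factor. Granted this, the defining triangle $\Delta^{cl}(\mu) \to \Theta_s\Delta^{cl}(\mu) \to T_s\Delta^{cl}(\mu) \xrightarrow{+1}$ identifies $T_s^{\pm 1}\Delta^{cl}(\mu)$ with $\Delta^{cl}(s \cdot \mu)$; the sign $T_s^{-1}$ in the final formula reflects the fact that $-2\rho$ is minimal in its $W^{ea}$-dot-orbit, so the identification goes ``up''. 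The needed standardly-filtered structure can be checked via the BMR $D$-module realization of Vermas on the flag variety, using the known behavior of translation functors on pushforwards from Schubert cells.
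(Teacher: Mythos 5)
Your proposal is correct and follows essentially the same route as the paper: identify $\mathcal{F}(\Delta^{cl}(-2\rho))$ with $\mathcal{O}_{\tilde{\Nilp}^{(1)}_\Delta}$ by tracing the unit bimodule $\U_0$ through the chain of equivalences, invoke the $\Lambda$-equivariance of Remark \ref{Rem:lattice_equivariance}, show $\mathcal{F}$ intertwines the finite braid group actions by matching $B_s^{HC}$ with $\Str_{(\tilde{\g}\times_{\tilde{\g}_s}\tilde{\g})^{(1)}\cap\St^{(1)}}$ (via the regular locus and Proposition \ref{Prop:BR}), and conclude with $T_{w^{-1}}^{-1}\Delta^{cl}(-2\rho)\cong\Delta^{cl}(w^{-1}\cdot(-2\rho))$ from the standard filtration of $\Theta_s$ applied to Vermas. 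The only point you leave implicit that the paper makes explicit is matching the adjunction morphism $\mathsf{1}\to\Theta_s$ with the corresponding sheaf morphism, which the paper handles by checking $\Hom(\mathsf{1},B_s^{HC})\cong\Ring$ and that the units generate.
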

\begin{proof}
The proof of this proposition is in two steps.

{\it Step 1.} The affine braid group $\Br^{ea}$ acts on $D^b(\Ocat^{[0]})$ by the classical wall-crossing functors. We claim that $\mathcal{F}$ intertwines the action of the finite braid group $\Br_W$ on
$D^b(\Ocat^{[0]})$ and the action of $\Br_W$ on $D^b(\Coh^{G^{(1)}}\St^{(1)})$ from the left
(the claim should be true for the whole group $\Br^{ea}$ but we do not need this).

The action of $T_s$ on $D^b(\Ocat^{[0]})$ is by tensoring with the cone of $\mathsf{1}\rightarrow \Theta_s$, where $\mathsf{1}$ denotes the monoidal unit, and $\Theta_s$ is the classical reflection functor. The functor $\Theta_s$ is given by tensoring with the Harish-Chandra bimodule $B_s^{HC}$ from Section \ref{SS_BR_HC}.
The action of $T_s$ on
$D^b(\Coh^{G^{(1)}}\St^{(1)})$ is by convolving with the cone of
$$\mathsf{1}\rightarrow \operatorname{Spec}\Ring\times_{\h^{*(1)}}[\Str_{\tilde{\g}^{(1)}\times_{\tilde{\g}^{(1)}_s}\tilde{\g}^{(1)}}],$$
compare to (\ref{eq:SES_braid_generator}). Under the first equivalence in  (\ref{eq:derived_equiv_bimod}),
the target sheaf is sent to
\begin{equation}\label{eq:refl_bimod_NC_Springer}
R\Gamma\left(\operatorname{Spec}\Ring\times_{\h^{*(1)}}[\Str_{\tilde{\g}^{(1)}\times_{\tilde{\g}^{(1)}_s}\tilde{\g}^{(1)}}],
\mathcal{T}_\h\otimes \mathcal{T}_\h^*\right).
\end{equation}
Note that, by the construction in
\cite{BM},  (\ref{eq:refl_bimod_NC_Springer}) is in homological degree $0$ and  is flat as a right $\Acal^{(1)}_\Ring$-module (see (c) of Corollary in \cite[Section 2.3.1]{BM}), hence as a $\F[\g^{*(1)\wedge}]$-module. The bimodule
$B_s^{HC}$ is flat over $\F[\g^{*(1)\wedge}]$ as well.

Now we show that (\ref{eq:bimod_equiv_main}) sends $B_s^{HC}$ to (\ref{eq:refl_bimod_NC_Springer}).
Since both are flat over $\F[\g^{*(1)\wedge}]$, it is enough to establish the claim that the former
module is sent to the latter under our equivalence after restricting to
 $\g^{*(1),reg}$. Over that locus both categories in (\ref{eq:bimod_equiv_main}) are equivalent
to $\Coh^{G^{(1)}}(\St^{(1)}_\Ring)$, and these equivalences are intertwined
by  (\ref{eq:bimod_equiv_main}). Further, note that
$\Coh^{G^{(1)}}(\St^{(1),reg}_\Ring)$ is equivalent to
the category $\operatorname{Rep}(\mathfrak{J}^\wedge)$ considered in
Section \ref{SS_BR_AS}. The image of (\ref{eq:refl_bimod_NC_Springer})
in  $\Coh^{G^{(1)}}(\St^{(1),reg}_\Ring)$ is the structure sheaf on the intersection of
$\St^{(1),reg}_\Ring$ with $(\tilde{\g}\times_{\tilde{\g}_s}\tilde{\g})^{(1)}$.
Under the equivalence with $\operatorname{Rep}(\mathfrak{J}^\wedge)$ (under the restriction to $S^{(1)}$),
this structure sheaf goes to the image of $B^{AS}_s$ under the full
embedding of Lemma \ref{Lem:full_embedding_geometric}. The same is true
for the image of $B^{HC}_s$, see the proof of Proposition \ref{Prop:BR}.
It follows that (\ref{eq:bimod_equiv_main}) indeed sends $B_s^{HC}$ to (\ref{eq:refl_bimod_NC_Springer}).
Therefore, under the equivalence of Theorem \ref{Thm:derived_loc_HC}, $B^{HC}_s$
goes to the structure sheaf of $(\tilde{\g}\times_{\tilde{\g}_s}\tilde{\g})^{(1)}\cap \St^{(1)}_\Ring$.

It remains to show that, under the same equivalence, the cone of $\mathsf{1}\rightarrow B^{HC}_s$
goes to the cone of $$\mathsf{1}\rightarrow \Str_{(\tilde{\g}\times_{\tilde{\g}_s}\tilde{\g})^{(1)}\cap \St^{(1)}_\Ring}.$$
This will follow if we check $$\Hom_{\HC^G(\U^{\wedge_0})}(\mathsf{1},B^{HC}_s)\cong \Ring,$$
as the adjunction units generate the corresponding Hom $\Ring$-modules. Note that both
objects in the Hom above are HC tilting. So the Hom $\Ring$-module is the same as between
their images in the equivalent category $\SBim^\wedge$ (the equivalence has been established in
Theorem \ref{Thm:HC-tilting_equiv}). There, it follows from the definition.

This completes the claim of Step 1.

{\it Step 2}. The claim that $\Delta^{cl}(-2\rho)$ goes to $\Str_{\tilde{\Nilp}^{(1)},\Delta}$ follows from the construction of
the equivalence $D^b(\Ocat^{[0]})\xrightarrow{\sim} D^b(\Coh^{G^{(1)}}\St^{(1)})$. Indeed,
under the equivalence $D^b(\Ocat^{[0]})\xrightarrow{\sim} D^b(\HC^G(\U)_0)$, the module
$\Delta^{cl}(-2\rho)$ goes to $\U_0$, see Proposition \ref{Prop:BG_derived}.
The construction of Remark \ref{Rem:derived_loc_HC} shows that
under the equivalence $$D^b(\HC^G(\U)_0)\xrightarrow{\sim} D^b(\Acal_\Ring\otimes_{\F[\g^{*(1)}]}\Acal^{opp}\operatorname{-mod}^{G^{(1)}})$$
the bimodule $\U_0$ goes to the bimodule $\Acal$. And that bimodule goes to
$\Str_{\tilde{\Nilp}^{(1)},\Delta}\in D^b(\Coh^{G^{(1)}}\St^{(1)})$.

According to Remark \ref{Rem:lattice_equivariance},
the equivalence $\mathcal{F}$ intertwines the actions of $\Lambda$. So it is enough
to prove the claim of the proposition when $x=w\in W$. Step 1 combined with the
previous paragraph reduces this to checking that $\Delta^{cl}(w^{-1}\cdot (-2\rho))\cong
T_{w^{-1}}^{-1}\Delta^{cl}(-2\rho)$.

To prove the latter isomorphism, recall that, for a simple reflection $s\in W$, the endo-functor $T_s^{-1}$ of $D^b(\Ocat^{[0]})$
is given by the cone of $\Theta_s\rightarrow \mathsf{1}$ (where the source functor is in homological
degree $0$). Note that, for $u\in W$, we have $ u^{-1}s\cdot (-2\rho)>u^{-1}\cdot (-2\rho)$
if and only if $u<su$ in the Bruhat order. If this is the case, then $\Theta_s \Delta(w^{-1}\cdot (-2\rho))$
fits into a short exact sequence
$$0\rightarrow \Delta^{cl}(u^{-1}s\cdot (-2\rho))\rightarrow
\Theta_s \Delta^{cl}(u^{-1}\cdot (-2\rho))\rightarrow \Delta^{cl}(u^{-1}\cdot (-2\rho))\rightarrow 0$$
and hence $T_s^{-1}\Delta^{cl}(u^{-1}\cdot (-2\rho))\xrightarrow{\sim} \Delta^{cl}(u^{-1}s\cdot (-2\rho))$. It follows that $T^{-1}_{w^{-1}}\Delta^{cl}(-2\rho)\cong \Delta^{cl}(w^{-1}\cdot (-2\rho))$.
This finishes the proof.
\end{proof}

\end{document}